\theoremstyle{plain}
\newtheorem{theorem}{Theorem}
\newtheorem{definition}{Definition}
\newtheorem{lemma}{Lemma}
\newtheorem{proposition}{Proposition}
\theoremstyle{remark}
\newtheorem*{remark}{Remark}
\newcommand\numberthis{\stepcounter{equation}\tag{\theequation}}
\renewcommand{\setminus}{\smallsetminus}
\newcommand{\ssum}[1]{\sum_{\substack{#1}}}
\newcommand{\e}{{\rm e}}
\newcommand{\dd}{{\rm d}}
\newcommand{\ee}{{\varepsilon}}
\newcommand{\chih}{{\hat \chi}}
\newcommand{\psih}{{\hat \psi}}
\newcommand{\phih}{{\hat \phi}}
\newcommand{\sg}{\langle}
\newcommand{\sd}{\rangle}
\DeclareMathOperator{\id}{id}
\newcommand{\C}{{\mathbb C}}
\newcommand{\R}{{\mathbb R}}
\newcommand{\Q}{{\mathbb Q}}
\newcommand{\Z}{{\mathbb Z}}
\newcommand{\N}{{\mathbb N}}
\newcommand{\1}{{\mathbf 1}}
\newcommand{\B}{{\mathcal B}}
\newcommand{\cD}{{\mathcal D}}
\newcommand{\cE}{{\mathcal E}}
\newcommand{\cF}{{\mathcal F}}
\newcommand{\cW}{{\mathcal W}}
\newcommand{\FR}{{\mathscr F}}
\newcommand{\fa}{{\mathfrak a}}
\newcommand{\fc}{{\mathfrak c}}
\newcommand{\fd}{{\mathfrak d}}
\newcommand{\fD}{{\mathfrak D}}
\newcommand{\fm}{{\mathfrak m}}
\newcommand{\fn}{{\mathfrak n}}
\newcommand{\fp}{{\mathfrak p}}
\newcommand{\ft}{{\mathfrak t}}
\newcommand{\gh}{{\hat g}}
\renewcommand{\O}{{\mathcal O}}
\newcommand{\Ov}{{\mathcal O}^{\vee}}
\newcommand{\Ok}{{\mathcal O}}
\newcommand{\cN}{{\mathcal N}}
\newcommand{\tc}[1]{{\left\langle {#1} \right\rangle}}
\newcommand{\rh}{\Theta}
\newcommand{\rb}{\theta}
\DeclareMathOperator{\card}{card}
\DeclareMathOperator{\meas}{meas}
\DeclareMathOperator{\supp}{supp}
\DeclareMathOperator{\Tr}{Tr}
\renewcommand{\tilde}{\widetilde}
\renewcommand{\bar}{\overline}
\renewcommand{\hat}{\widehat}
\newcommand{\qt}{{\tilde q}}
\newcommand{\floor}[1]{{\left\lfloor {#1} \right\rfloor}}
\renewcommand{\mod}[1]{\ ({\rm mod\ }#1)}
\newcommand{\abs}[1]{\left| #1 \right|}
\newcommand{\norm}[1]{\| #1 \|}
\numberwithin{equation}{section}
\title[Digits of primes in principal number fields]{The Thue-Morse and Rudin-Shapiro sequences at primes in principal number fields}
\date{\today}
\author{S. Drappeau}
\address{Aix Marseille Université, CNRS, Centrale Marseille, I2M UMR 7373, 13453, Marseille, France}
\email{sary-aurelien.drappeau@univ-amu.fr}
\author{G. Hanna}
\address{Institut Élie Cartan de Lorraine, Université de Lorraine, Site de Nancy, B.P. 70239, F-54506 Vandoeuvre-lès-Nancy Cedex}
\email{gautier.hanna@univ-lorraine.fr}
\subjclass[2010]{11R44 (Primary); 11A63, 11B85 (Secondary)}
\begin{document}

\maketitle

\begin{abstract}
  We consider a numeration system in the ring of integers~$\O_K$ of a number field, which we assume to be principal. We prove that the property of being a prime in~$\O_K$ is decorrelated from two fundamental examples of automatic sequences relative to the chosen numeration system: the Thue-Morse and the Rudin-Shapiro sequences. This is an analogue, in~$\O_K$, of results of Mauduit-Rivat which were concerned with the case~$K=\Q$.
\end{abstract}

\section{Introduction}

\subsection{Digits and multiplicative structure}

The present work is concerned with the interaction between the additive, multiplicative, and numeration properties of numbers, which is a reccurrent motivating theme in analytic number theory. The recent years, a lot of progress has been made on our understanding of digits of multiplicatively constrained integers (\emph{e.g.} primes): see~\cite{FouvryMauduit1996,FouvryMauduit1996a,DartygeTenenbaum2005,MauduitRivat2010,MR,Hanna2017,DrmotaEtAl2009} for sum of digits of primes in residue classes, \cite{HarmanKatai2008,Bourgain2015,Swaenepoel} for primes with restricted digits, or \cite{MR-Carres, DrmotaEtAl2011, DrmotaMorgenbesser2012, MauduitRivat2018, DrmotaEtAl2019} for digits of polynomials. Here we are interested in two particular digital functions (defined in terms of digit expansion), the sum-of-digits function
$$ s_q(n) := \sum_{0\leq j < J} b_j \qquad \text{if } n = \sum_{0\leq j < J} b_j q^j, \ b_j \in \{0, \dotsc, q-1\}. $$
and the Rudin-Shapiro sequence
$$ r(n) := \sum_{0\leq j < J-1} b_j b_{j+1} \qquad \text{if } n = \sum_{0\leq j < J} b_j 2^j, \ b_j\in\{0, 1\}. $$
Given a fixed integer~$m$, the functions~$n\mapsto s_q(n)\mod{m}$ and~$n\mapsto r(n)\mod{m}$ are two particular instances of automatic sequences, and it is predicted by Sarnak's Möbius randomness conjecture~\cite{Sarnak} (in one of its lowest complexity case) that they should not be correlated with integer factorization, in the precise sense that the Möbius function should have average zero along automatic sequences. For the sum-of-digit function, this expectation goes back to conjectures of Gel'fond~\cite{Gelfond1967/1968}. This question was solved, in a strong quantitative form, by Mauduit and Rivat~\cite{MauduitRivat2010} for the sum-of-digit case, then by the same authors~\cite{MR} for the Rudin-Shapiro case; and finally the full Sarnak conjecture for automatic sequences was proved by Müllner~\cite{Muellner2017}. The arguments in~\cite{MR} are one of the crucial inputs in~\cite{Muellner2017}.

\subsection{Digits of integers in number fields}

Our aim is to take up the study \cite{MR} and explore the corresponding questions in number fields. Let~$K/\Q$ be an algebraic extension, and~$\O_K$ be its ring of integers. We endow~$\O_K$ with a numeration structure, in the following way.
\begin{definition}
  Let~$q\in\O_K \setminus\{0\}$ and~$\cD \subset \O_K$ be a set of representatives of~$\O_K / (q)$. We call the pair~$(q, \cD)$ a \emph{number system with the finiteness property} (FNS) if:
  \begin{itemize}
    \item $0\in\cD$,
    \item the Galois conjugates of~$q$ have moduli larger than~$1$,
    \item every~$n\in\O_K$ has an expansion of the form~$n=\sum_{0\leq j < J} b_j q^j$, where~$b_j \in \cD$.
  \end{itemize}  
\end{definition}
We make a small account of works on these systems in Section~\ref{sec:setting} below; see also Section~3.1 of the survey~\cite{BaratBertheEtAl2006} for a discussion in the broad context of numeration systems.

The smallest~$J\in \N_{\geq 0}$ such that~$b_j=0$ for~$j\geq J$ will be called the \emph{length} of~$n$. The simplest non-rational example is the case~$K = \Q(i)$, $q=-1+i$, and~$\cD = \{0, 1\}$; see~\cite[p.~206]{Knuth1981}. We make an account of existing works on number systems relevant to our case in Section~\ref{sec:setting} below, and refer to~\cite{BaratBertheEtAl2006} for more references on this topic.

Our aim is to show that this numeration structure does not correlate with the multiplicative structure of~$\O_K$. We will assume, throughout, that~$\O_K$ is principal, so that it is a unique factorization domain. We present our results in the representative cases of the generalized sum-of-digit and Rudin-Shapiro functions.

Define, for all~$n\in\O_K$, the sum-of-digits function~$s(n) = s_{q,\cD}(n)$ as
\begin{equation}
  s_{q,\cD}(n) := \sum_{0\leq j < J} b_j \qquad \text{if } n = \sum_{0\leq j < J} b_j q^j, \ b_j \in \cD. \label{eq:def-sqK}
\end{equation}
Several aspects of this function have been studied in the past: asymptotic formula for the mean-value and fluctuations in its the constant term~\cite{GrabnerEtAl1998,Thuswaldner1998}, equidistribution modulo~$1$~\cite{Grabner1999}, central limit theorems~\cite{GittenbergerThuswaldner2000,Madritsch2010}, and equidistribution along squares~\cite{Morgenbesser2010}. In the case~$q=-1+i$, $\cD=\{0, 1\}$, we have~$s_{q,\cD}(n) \in \N$, and as a special case of~\cite[Theorem~11]{Grabner1999} we have that for any~$\alpha\in\R\setminus\Q$, the multi-sets
$$ \{ \alpha s_q(n),\ n\in\O_K \text{ of length }\leq J\} $$
become equidistributed modulo~$1$ as~$J\to\infty$.

We are interested in this question when~$n$ is restricted to be prime in~$\O_K$. In~\cite{DrmotaRivatEtAl2008,Morgenbesser}, this problem was addressed in the Gaussian integer setting~$K = \Q(i)$, using the approach of~\cite{MauduitRivat2010}. The question of whether the same method holds for other number systems was left open; the case when~$K$ is imaginary quadratic has specific aspects, notably the fact that multiplication by a complex number is a similarity, which are implicitely at play in~\cite{Morgenbesser}. We show that the expected statement in fact holds in full generality.

\begin{theorem}\label{thm:main-thue}
  Suppose that~$\O_K$ is a unique factorization domain, let~$(q, \cD)$ be a number system with the finiteness property, and~$\phi:K \to \R$ be a linear form. Then, as~$J\to\infty$, the multi-sets
  \begin{equation}
    \{\phi(s_q(p)),\ p\in\O_K \text{ prime of length }\leq J\}\label{eq:ens-main-thue}
  \end{equation}
  becomes equidistributed modulo~$1$ if and only if~$\phi(b) \in \R\smallsetminus\Q$ for some~$b\in\cD$.
\end{theorem}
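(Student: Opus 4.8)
The plan is to reduce the equidistribution statement to an exponential-sum estimate via Weyl's criterion, and then to establish that estimate by adapting the Mauduit--Rivat machinery (in the form used in \cite{MauduitRivat2010}) to the ring~$\O_K$. Concretely, equidistribution modulo~$1$ of the multi-set in~\eqref{eq:ens-main-thue} is equivalent, by Weyl, to the statement that for every nonzero integer~$h$,
\begin{equation*}
  \frac{1}{\card\{p \in \O_K \text{ prime of length} \leq J\}} \sum_{\substack{p \in \O_K \text{ prime} \\ \text{of length} \leq J}} \e\bigl(h\, \phi(s_q(p))\bigr) \longrightarrow 0 \qquad (J \to \infty).
\end{equation*}
Here $\phi(s_q(n)) = \sum_j \phi(b_j)$ where $n = \sum_j b_j q^j$, so $h\phi(s_q(n))$ is itself an additive digital function with complex "weights" $\e(h\phi(b))$, $b \in \cD$. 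The necessity of the irrationality hypothesis is easy: if $\phi(b) \in \Q$ for all $b \in \cD$, then $\phi(s_q(n))$ takes values in a fixed $\frac{1}{N}\Z$ for all $n$, so the multi-set cannot be equidistributed. The substance is the sufficiency direction.

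For the sufficiency direction, I would first pass from primes to the von Mangoldt function $\Lambda_K$ on $\O_K$ (or equivalently use Vaughan's identity / the Heath-Brown identity in $\O_K$, which is legitimate since $\O_K$ is a UFD) to split the prime sum into "type I" sums $\sum_{m}\sum_{n}$ with a smooth or short coefficient in one variable, and "type II" bilinear sums $\sum_m \alpha_m \sum_n \beta_n \e(h\phi(s_q(mn)))$. The key analytic inputs are then: (i) a \emph{carry propagation / truncation} lemma, showing that $s_q(mn) \bmod (\text{high digits})$ depends essentially only on $mn \bmod q^k$ for suitable $k$, with a controlled error — this uses the FNS property and the fact that the Galois conjugates of $q$ have modulus $> 1$ (so that low digits are determined by residues mod powers of $q$, and the expansion length is $\asymp \log N / \log |N(q)|$); (ii) a \emph{van der Corput / $\ell^2$} step to convert the bilinear sum into sums of the shape $\sum_n \e\bigl(h\phi(s_q(n+tr) ) - h\phi(s_q(n))\bigr)$ over $n$ in a box in $\O_K$; and (iii) a \emph{Fourier/Gauss sum} estimate for the truncated digital function on residues mod $q^k$, exploiting the irrationality of some $\phi(b)$ to get a nontrivial bound (this is where the "$L^\infty$ vs. $L^1$" gain of Mauduit--Rivat enters, via an estimate like $\sum_{u \bmod q^\lambda} |\widehat{f_\lambda}(u)| \ll |N(q)|^{\lambda(1-\delta)}$ for the truncated function $f_\lambda$).

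The main geometric issue — absent in the $K=\Q$ and imaginary quadratic cases — is that $\O_K$, viewed via its embeddings $K \hookrightarrow \R^{r_1} \times \C^{r_2}$, is a lattice on which "multiplication by $q$" acts as a linear map that is \emph{not} a similarity in general (it is only a similarity when $K$ is imaginary quadratic). Thus the "balls" $\{n : \text{length}(n) \leq J\}$ and the fundamental domains for $q^k\O_K$ are genuinely anisotropic, and one must carry out the counting of lattice points, the carry analysis, and the van der Corput shifts with respect to this non-conformal geometry. Concretely I expect to need: a description of $\{n : \mathrm{length}(n) \leq J\}$ as (up to a negligible boundary) the set of lattice points in a compact self-affine "fundamental tile" scaled by $q^J$ — results of Grabner, Thuswaldner et al.\ on the tile associated to an FNS give the needed regularity (the boundary has box dimension $< $ full, so lattice-point counts have power-saving error); and estimates for incomplete character/Gauss sums over $\O_K / q^k\O_K$ that are uniform in the (non-conformal) shape of the summation region.

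The hard part, then, is step (iii) combined with the anisotropy: making the Fourier-analytic estimate for the truncated sum-of-digits function on $\O_K/q^\lambda\O_K$ genuinely quantitative and uniform, and then propagating a power saving through the van der Corput iteration while keeping track of the distorted lattice geometry — i.e.\ a "number field" version of the combinatorial heart of \cite{MauduitRivat2010}. Everything else (Weyl reduction, Vaughan/Heath-Brown in a UFD, the type I sums, the final assembly) should follow the template of \cite{MauduitRivat2010, DrmotaRivatEtAl2008, Morgenbesser} after the geometric and Fourier lemmas are in place.
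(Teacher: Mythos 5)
Your skeleton is the same as the paper's: Weyl's criterion reduces the statement to cancellation of $f(n)=\e(h\phi(s_q(n)))$ over primes of bounded length (the necessity direction being trivial, as you say), and this is attacked through a type~I/type~II bilinear decomposition, a carry-truncation lemma, and a Fourier-type estimate for the truncated digital function whose saving comes from $\norm{h\phi(b)}_{\R/\Z}>0$ for some digit $b$; this is exactly how the paper proceeds, via Theorem~\ref{thm:meanvalue-prime} together with Lemma~\ref{lem:props-sumdig} (your preference for the $L^1$-Fourier route of~\cite{MauduitRivat2010}, as in~\cite{DrmotaRivatEtAl2008,Morgenbesser}, rather than the carry-plus-Fourier formalism of~\cite{MR}, is a legitimate variant for this particular theorem, and the choice of Vaughan/Heath--Brown versus the paper's combinatorial identity in Lemma~\ref{lem:comb-ident} is inessential). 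However, the two points you yourself isolate as ``the hard part'' are precisely where your proposed inputs would not deliver the theorem in the stated generality, so the plan has genuine gaps rather than being a proof.

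First, the tile. You propose to import regularity of $\partial\FR$ (box dimension $<d$, hence power-saving lattice-point counts and uniform incomplete Gauss sums). But upper-box-dimension bounds are only available for restricted classes (e.g.\ CNS under generic conditions,~\cite{ScheicherThuswaldner2002}), not for a general FNS of arbitrary degree; and even where they hold, the pointwise decay of $\chih_{\FR}$ is far too weak to run the completion/truncation steps uniformly --- already in the Knuth case the optimal exponent is $1/2$~\cite{CohenDaubechies1993}. The paper's workaround is structural: smooth majorants are scaled down by powers of $q$ (not by homotheties), so that all boundary-neighbourhood volumes and Fourier-truncation errors are controlled by the carry constant $\eta_2$ of Lemma~\ref{lem:carry}, and the only quantitative information used about $\chih_{\FR}$ is an $L^2$/Parseval bound (Lemma~\ref{lem:Fourier-psi}, Proposition~\ref{prop:Fourier-middle}). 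Without a substitute of this kind your step (iii) and the ``incomplete sums over $\cN_\nu$'' cannot be made uniform. Second, the skewing: you flag that multiplication by $q$ is not a similarity, but give no mechanism to neutralize it; naive counting loses factors reflecting the gap between $\rh$ and $\rb$. The paper's fix is systematic renormalization by units (Dirichlet's theorem): every lattice-point count, the large sieve, and the passage from sums over ideals (forced by the combinatorial identity) back to prime \emph{elements} of controlled digital length go through a unit rescaling and a partition of unity over $\Ok^\ast$ (Lemmas~\ref{lem:count-units}, \ref{lem:large-sieve}, \ref{lem:lattice-count}, \ref{lem:units}); your sketch via $\Lambda_K$ also glosses over the ideal-versus-element issue, which is exactly where this device is needed. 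Until these two mechanisms (or equivalents) are supplied, the argument does not close.
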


Under the appropriate conditions, which are more involved, a similar equidistribution statement holds for linear maps~$\phi: K\to \R^d$ where~$d=[K:\Q]$. Note also that we have chosen, for simplicity only, to control the size of~$p$ by its digital length.

We next turn to the Rudin-Shapiro sequence, which was introduced due to the extremal properties of its associated trigonometric polynomials~\cite{Rudin1959, Shapiro1953}.  We consider the multidimensional variants constructed in~\cite{BarbeHaeseler2005}: we call~$(q, \cD)$ a \emph{binary} FNS if~$\card(\cD) = 2$. Binary NFS were characterized in~\cite{BarbeHaeseler2006}. We define a function~$r_{q,\cD} : \O_K \to \N$ by
\begin{equation}
  r_{q,\cD}(n) := \sum_{0\leq j < J-1} \1(b_j b_{j+1} \neq 0) \qquad \text{if } n = \sum_{0\leq j < J} b_j q^j, \ b_j \in \cD,\label{eq:def-rqK}
\end{equation}
where~$\1(n\neq 0)$ is~$0$ or~$1$ according to whether~$n=0$ or not.

This sequence is a non-trivial and natural instance of a digital function which has much less useful analytic properties than~$s_{q,\cD}$: it is not $q$-additive, and by analogy with the rational case, we do not expect its discrete Fourier transforms to have better than square-root cancellation in~$L^1$ norm (as opposed to~$s_{q,\cD}$). The arguments in~\cite{MR} were partly designed to work without these useful analytic properties.

In~\cite{DrmotaEtAl2008}, this function was considered in the general setting of ``block-additive functions''. There it was shown, using ergodic methods, that for any~$\alpha\in\R\setminus\Q$, the multi-sets
$$ \{ \alpha r_{q,\cD}(n),\ n\in \O_K \text{ of length }\leq J\} $$
become equidistributed modulo~$1$ as~$J\to \infty$\footnote{The authors of~\cite{DrmotaEtAl2008} work exclusively in the Knuth setting~$(q, \cD) = (-1+i, \{0, 1\})$, but the statement above can be easily deduced from our arguments.}. 

We show that the corresponding statement holds for primes in full generality.

\begin{theorem}\label{thm:main-rudin}
  Suppose that~$K$ is a unique factorization domain, let~$(q, \cD)$ be a binary FNS, and~$\alpha \in \R\setminus\Q$. Then, as~$J\to\infty$, the multi-sets
  \begin{equation}
    \{ \alpha r_{q,\cD}(p), \ p\in\O_K\text{ prime of length} \leq J\}\label{eq:ens-main-rudin}
  \end{equation}
  becomes equidistributed modulo~$1$.
\end{theorem}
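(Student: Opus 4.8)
The plan is to follow the Mauduit--Rivat strategy of~\cite{MR}, adapted to~$\O_K$, using the vanishing of type~I and type~II sums against the exponential $e(\alpha r_{q,\cD}(n))$ to invoke a Vaughan-type identity in~$\O_K$. Concretely, one would first reduce the equidistribution of $\{\alpha r_{q,\cD}(p)\}$ modulo~$1$ via Weyl's criterion to showing that, for each fixed nonzero integer~$h$,
\begin{equation*}
  \sum_{\substack{p\in\O_K\ \text{prime}\\ \text{length}(p)\leq J}} e\bigl(h\alpha\, r_{q,\cD}(p)\bigr) = o\bigl(\#\{p : \text{length}(p)\leq J\}\bigr),
\end{equation*}
where the cardinality on the right is of order $\abs{q}^{2J}/(J\log\abs{q})$ by the prime ideal theorem in $\O_K$ (combined with the fact that the map $p\mapsto (p)$ is finite-to-one since the unit group is finite in the cases under consideration, or handled via a fundamental domain otherwise). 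Writing $f(n) = e(h\alpha\, r_{q,\cD}(n))$, the block-additive structure of $r_{q,\cD}$ shows that $f$, while not $q$-multiplicative, is a "digital" function controlled by its values on short digit blocks; one truncates to length $J$ and decomposes $n = \sum b_j q^j$, so that $f(n)$ depends on the consecutive digit pairs.

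The core is a carry-propagation analysis together with bounds for two bilinear forms. For the \emph{type~I} sums $\sum_{m}\sum_{n} a_m f(mn)$ (with $m$ ranging over a short range and the inner variable $n$ unrestricted), one exploits that $r_{q,\cD}(mn)$ and $r_{q,\cD}(n \cdot q^k + (\text{low part}))$ differ in a controlled way: multiplication by a fixed $m$ of bounded length perturbs only $O(\text{length}(m))$ digits near the junction, so $f(mn)$ factors up to a small number of digit positions, and one gets cancellation from summing the "high" digits freely, using a geometric-series / van der Corput estimate on the relevant multidimensional digit sums (here the several archimedean places of $K$ enter, and one sums over each coordinate of $n$ viewed through the embedding $K\hookrightarrow \R^{r_1}\times\C^{r_2}$). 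For the \emph{type~II} sums $\sum_m\sum_n a_m c_n f(mn)$, one follows~\cite{MR}: apply Cauchy--Schwarz to remove $a_m$, open the square to get $\sum_n\sum_{n'} c_n \bar c_{n'} \sum_m f(mn)\bar f(mn')$, and then the key \emph{carry propagation lemma} shows $f(mn)\bar f(mn')$ stabilizes (is independent of the top digits of $m$) once $m$ is long enough, except on a sparse set of $m$; the gain is then $\abs{q}^{-2\lambda J}$ for some $\lambda>0$ after optimizing the cutoff between "detecting digits of $n$" and "detecting digits of $m$", exactly as in the Rudin--Shapiro analysis where one cannot use Fourier $L^1$ bounds.

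The main obstacle is the carry-propagation lemma in $\O_K$: in $\Q$ one uses that adding a bounded quantity to the low-order part of $n$ changes only $O(\log_q(\text{small})) $ digits with the rest unchanged, which rests on the monotone/ordered structure of base-$q$ expansions over~$\Z$. Over~$\O_K$ there is no order, carries can propagate in a genuinely $d$-dimensional way through the several Galois conjugates, and the digit map is only eventually periodic in a more delicate sense; one must quantify, uniformly, how many of the top digits of $n\cdot q^k + a$ (for $a$ of bounded length, $k$ large) are forced, i.e. prove a "fundamental domain stabilization" statement for the FNS $(q,\cD)$. This is where the hypotheses that $\O_K$ is a UFD and that all Galois conjugates of $q$ have modulus $>1$ are essential: the latter gives a contracting property that bounds carry length, and the former lets one run the Vaughan/Heath-Brown combinatorial identity to produce the type~I and type~II shapes. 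Once this stabilization is established, assembling the two bilinear estimates into the required $o$-bound via the identity is routine, as is the final deduction of Theorem~\ref{thm:main-rudin} from the exponential-sum bound via Weyl.
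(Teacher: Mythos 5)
Your overall strategy is the one the paper follows (Weyl's criterion for $h\alpha$, a combinatorial identity producing type~I and type~II bilinear sums, the carry property and a Fourier-type estimate for $\e(h\alpha r_{q,\cD}(n))$, and Mauduit--Rivat-style van der Corput/Cauchy--Schwarz manipulations), and you correctly isolate carry propagation in the FNS $(q,\cD)$ as a key lemma; this is Lemma~\ref{lem:carry}. However, there is a genuine gap: you never address what the paper identifies as the main new analytic difficulty, namely the harmonic analysis of the fundamental tile~$\cF$. In the type~II argument one must detect, inside a bilinear sum over $(m,n)$, that the middle digit block $r_{\mu_0,\mu_2}(mn)$ takes a prescribed value, i.e.\ that $mn/q^{\mu_2}$ lies in a translate of $\cF$ modulo the lattice; over $\Q$ the tile is an interval and one has explicit Fourier expansions and Vaaler-type band-limited majorants, but here $\partial\cF$ is fractal and the decay of $\chih_\cF$ is too weak for the argument to "go through exactly as in the Rudin--Shapiro analysis", as you assert. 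The paper has to construct smoothed approximations $A_{\sigma,\tau}$, $B_{\sigma,\tau}$ scaled by powers of $q$, control the error by the measure of a $q^{-\tau}$-neighbourhood of $\partial\cF$ via the carry constant $\eta_2$, and crucially rework the whole scheme so that only $L^2$ (Parseval) information on these Fourier coefficients is needed (Lemma~\ref{lem:Fourier-psi} and Proposition~\ref{prop:Fourier-middle}). Without some substitute for this step, your type~II bound does not close: the "stabilization of $f(mn)\bar f(mn')$ in the top digits of $m$" must be converted into an exponential-sum estimate over lattice points, and that conversion is exactly where the tile's Fourier transform enters.

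Two further points would fail as written. First, your type~I heuristic is incorrect: multiplication of $n$ by a short $m$ does \emph{not} perturb only $O(\mathrm{length}(m))$ digits; the actual type~I estimate (Proposition~\ref{prop:type-i}) detects $m\mid \ell$ by additive characters $\e(\tc{k\ell/m})$ and combines the carry and Fourier properties with a large-sieve inequality over proper additive characters modulo ideals, together with unit-counting to pass between ideals and elements. Second, the unit group of $\O_K$ is infinite for every field other than $\Q$ and imaginary quadratic fields, so your parenthetical "the unit group is finite in the cases under consideration" is wrong in the generality of the theorem; the quotient by units, and the systematic use of units to counteract the skewing of multiplication by powers of $q$ (which is not a similarity), are handled in the paper by a homogeneous partition of unity and unit-counting lemmas (Lemmas~\ref{lem:count-units} and~\ref{lem:units}), and this is needed both in the lattice-point counts and in the large sieve. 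Finally, note the paper uses a combinatorial identity avoiding the von Mangoldt function rather than Vaughan's identity proper, and the Fourier property for the Rudin--Shapiro function is imported from the literature on block-additive functions; these are minor, but the two gaps above are not.
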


As we mentioned in the introduction, in the recent work~\cite{Muellner2017} pertaining to the case~$K = \Q$, the Sarnak conjecture was fully solved for automatic sequences detecting integers given by their usual digital expansion. By combining the arguments of Sections~4.1--4.2 of~\cite{Muellner2017} with the work presented here, we expect that the M\"obius function
$$ \mu_K(n) = \begin{cases} (-1)^k & \text{ if } n \text{ is, up to units, a product of } k \text{ distinct primes}, \\ 0 & \text{ otherwise}, \end{cases} $$
is asymptotically orthogonal to the output of an automaton reading the digits of~$f$ in any FNS. Here, however, we choose to remain in the formalism of~\cite{MR}, having in mind only the sum-of-digits and the Rudin-Shapiro sequence. The input required to handle arbitrary automatic sequences does not substantially differ from~\cite{Muellner2017} and we believe it would obfuscate the ``number field'' aspects of our arguments. We also believe that by mixing the arguments of~\cite{MauduitRivat2018} with the ones presented here, one should be able to show that the multi-sets~$\{\alpha r_{q, \cD}(n^2), n\in\O_K \text{ of length} \leq J\}$ become equidistributed as~$J\to\infty$ if~$\alpha\not\in\Q$.

Another interesting direction would be to restrict the sets~\eqref{eq:ens-main-thue}, \eqref{eq:ens-main-rudin} to \emph{rationals} that are prime in~$\O_K$, which form a very sparse subset of all primes. The arguments presented here are, in their present form, not effective enough to address this question. Note however that partial results have been proved in~\cite{GrabnerEtAl1998} for the average of the sum-of-digit function, without the primality condition.

\subsection{Overview}

The difficulties we encounter in Theorems~\ref{thm:main-thue} and \ref{thm:main-rudin} are of two kinds.

The first is related to point-counting on lattices, and the ``skewing phenomenon''. The multiplication by~$q$, viewed as a map on the lattice~$\O_K$, can be quite far from a similarity in general, depending on the relative moduli of Galois conjugates of~$q$. This can induce an inefficiency in lattice-point counting estimates in large dilates~$q^\nu \O_K$. The effect of this skewing will be counteracted by systematically using Dirichlet's theorem on the structure of the group of units.

The second, more substantial difficulty is the harmonic analysis of the fundamental tile
$$ \cF = \{ \sum_{j\geq 1} b_j q^{-j}, b_j \in \cD\}. $$
By contrast with earlier works, the method of~\cite{MR}, which we take up here, makes a particularly extensive use of information on the Fourier transform~${\hat \chi_\cF}$ of the indicator function~$\chi_\cF$ of~$\cF$ (viewed as a subset of~$\R^d$ through a choice of basis of~$\O_K$). In the classical case~$K=\Q$, the fundamental tile is an interval (see~\cite[Lemma~1]{MR}), so that we have explicit expressions and bounds for its Fourier transform. In the general case, and in fact already for the Knuth setting~$K = \Q(i)$, $(q, \cD) = (-1+i,\{0, 1\})$, the fundamental tile has a non-trivial fractal boundary, known as the ``twin-dragon'' in the Knuth case (\cite[p.~66]{Mandelbrot1982}, \cite[p.~206]{Knuth1981}). The Fourier transform~${\hat\chi_{\cF}}$ does not decay uniformly enough for the method to naively go through\footnote{The analysis of the rates of decay of functions such as~${\hat\chi}_{\cF}$ is in fact an important object of study in wavelet theory; see the references in Section~\ref{sec:harm-analys-fund}.}. To handle this, we rework the arguments of~\cite{MR} so as to require as few information of the Fourier transform as possible: as we will show, the arguments of~\cite{MR} can be recast so that the only essential input is an~$L^2$ bound on~${\hat\chi}_{\cF}$, which we will obtain easily from Parseval's identity.

\section{Setting}

\subsection{Number field}\label{sec:setting}

Let~$K$ be a number field, with its trace map denoted~$\tc{x} = \Tr(x) = \Tr_{K/\Q}(x)$. We abbreviate throughout
$$ \O := \O_K. $$
We denote~$\Ov$ the dual of~$\O$ for the scalar product~$(x, y) \mapsto \Tr(xy)$. It is a fractional ideal, and the different ideal~$\fD_K := (\Ov)^{-1}\subset \O$ is of norm equal to the discrimant of~$K$~\cite[Chapter~4.1]{Narkiewicz}.

Given a base~$q\in\O$, all of whose conjugates have moduli greater than~$1$, and a set of digits~$\cD$, assume that any element of~$n\in\O$ has a unique base~$q$ expansion
$$ n = \sum_{j=0}^r b_j q^j, \qquad r\in\N, b_j\in\cD. $$
On the other hand, when the ring~$\O$ is principal, $n$ also possesses a factorisation~$n = p_1 \dotsb p_\ell$ as a product of prime elements, which is also unique up to order and multiplication by units.

Let~$(\omega_1, \dotsc, \omega_d)$ be a~$\Z$-basis of~$\O$, and~$(\omega^\vee_1, \dotsc, \omega^\vee_d)$ its dual basis~$\Ov$. For any $(x_j)_{1\leq j \leq d}$, $(y_j)_{1\leq j \leq d} \in \R^d$, denote
$$ \iota(x_1, \dotsc, x_d) = \sum_{j=1}^d x_j \omega_j, \qquad \iota^\vee(y_1, \dotsc, y_d) = \sum_{j=1}^d y_j \omega^\vee_j. $$
Note that~$K = \iota(\Q^d) = \iota^\vee(\Q^d)$, and
\begin{equation}
  \O = \iota(\Z^d), \qquad \Ov = \iota^\vee(\Z^d).\label{eq:Ok-lattice-ident}
\end{equation}
We fix a norm~$\|\cdot\|$ on~$\R^n$, and when~$x\in K$, we use the notation~$\|x\|$ to mean~$\|\iota^{-1}(x)\|$.

We denote~$G_K := {\rm Gal}(K/\Q)$, and given~$\pi \in G_K$ and~$x\in K$, we denote~$x^\pi := \pi(x)$.

We pick a base~$q\in\O$ and assume that all conjugates of~$q$ have modulus~$>1$. Let~$\cD$ be a set of representatives of~$\O/(q)$ containing~$0$. Borrowing the terminology of~\cite{PethoeThuswaldner2017}, we call such a pair~$(q, \cD)$ a number system. If every~$n\in\O$ has a finite expression
$$ n = \sum_{j=0}^r b_j q^j $$
with~$b_j\in\cD$ and~$r\geq 0$, then we say that~$(q, \cD)$ has the finiteness property. Note that such an expansion, if it exists, is unique. We use the abbreviation FNS to designate a number system with the finiteness property.

Given a pair~$(q, \cD)$, Kov\'acs and Peth\H{o}~\cite{KovacsPethHo1991} have shown that the question of whether~$(q, \cD)$ is a FNS can be decided algorithmically in finite time (they also characterize completely such number systems in positive characteristics). Gröchenig and Haas~\cite[Theorem~2.2]{GroechenigHaas1994} have shown that it corresponds exactly to a certain explicit matrix having spectral radius~$<1$ (which is equivalent to the existence of cycles in a certain directed graph, which we will mention below in Section~\ref{sec:carry-propagation}). The FNS are characterized for~$d=1$ in Theorem~2.3 of~\cite{GroechenigHaas1994}; in the same paper, the authors characterize the numbers~$q$ which can arise as the bases of FNS for the field~$K = \Q(i)$.

If~$(q,\cD)$ is a number system with~$\cD = \{0, \dotsc, N(q)-1\}$, the pair~$(q, \cD)$ is called a \emph{canonical number system} (CNS). The fields~$K$ which admit a CNS with the finiteness property have been characterized in~\cite{Kovacs1981}: they are exactly those fields for which~$\Ok$ has a primitive element. Many works have been devoted to deciding whether a given pair~$(q, \cD)$ is a CNS with the finiteness property. The problem was completely solved in the quadratic case~$d=2$ by K\'atai and Szab\'o~\cite{KataiSzabo1975} and K\'atai and Kov\'acs~\cite{KataiKovacs1980,KataiKovacs1981}. For~$d\geq 3$, only partial results are known; Akiyama and Peth\H{o}~\cite{AkiyamaPethHo2002} construct an algorithm which determines whether~$(q, \cD)$ is a CNS with the finiteness property using only the coefficients of the minimal polynomial of~$q$. Other partial results have been proved for~$d=3$~\cite{AkiyamaEtAl2003} and~$d=4$~\cite{BrunotteEtAl2006}.

Returning to general number systems, Germ\'an and Kov\'acs~\cite{GermanKovacs2007} proved that any~$q$ having all its conjugates of moduli~$<1/2$ admits a set of digits~$\cD$ for which~$(q, \cD)$ is a FNS.

From now on, we assume that~$(q, \cD)$ has the finiteness property.

Let~$\cF$ be the fundamental tile
\begin{equation}
  \cF = \Big\{ \sum_{j=1}^r b_j q^{-j},  r\geq 1, b_j \in \cD\Big\}.\label{eq:def-F}
\end{equation}
We will state in Section~\ref{sec:harm-analys-fund} below the basic properties of~$\cF$; for now, let us simply mention that there exist~$R_\cF^-, R_\cF^+ > 0$ (depending on~$(q,\cD)$ and~$\|\cdot\|$) such that
\begin{equation}
  \{x\in K, \|x\| \leq R_{\cF}^{-}\} \subset \cF \subset \{x\in K, \|x\| \leq R_{\cF}^+\}.\label{eq:F-subsup}
\end{equation}
In particular, since all the conjugates of~$q$ have moduli~$>1$, for some~$\Lambda\in\N$ there holds
\begin{equation}\label{eq:def-Lambda}
  (\cF + \cF) \cup (-\cF) \cup (\cF \cdot \cF) \subset q^{\Lambda}\cF.
\end{equation}
For any integer~$\kappa\geq 0$, we define
$$ \cN_{\kappa} := \Big\{\sum_{j=0}^{\kappa-1} b_j q^j, b_j\in\cD\Big\}. $$

\subsection{Hypotheses on~$f$ and~$(q, \cD)$}

We work with the formalism introduced in~\cite{MR}, which assumes two hypotheses of different nature on~$f$.

\begin{definition}
  We say that~$f$ satisfies the \emph{Carry property} if there exists a number~$\eta_1>0$ such that for any~$\kappa, \lambda, \rho\in\N$ with~$\rho \leq \lambda$, the number of~$v\in\cN_{\lambda}$ such that
  \begin{equation}
    f(u_1+u_2+vq^\kappa) \bar{f(u_1+vq^\kappa)} \neq f_{\kappa + \rho}(u_1+u_2+vq^\kappa) \bar{f_{\kappa + \rho}(u_1+vq^\kappa)}\label{eq:carry-prop}
  \end{equation}
  for some~$(u_1, u_2)\in\cN_\kappa^2$, is bounded by~$O(N(q)^{\lambda-\eta_1\rho})$.
\end{definition}

\begin{definition}
  We say that~$f$ satisfies the \emph{Fourier property} if there exist a non-decreasing function~$\gamma:\N\to\R_+$, and~$c>0$, such that uniformly for~$\lambda\in\N$, $\kappa\leq c\lambda$ and~$t\in K$,
  \begin{equation}
    \sum_{v\in\cN_\lambda} f(vq^\kappa) \e^{2\pi i\tc{tv}} \ll N(q)^{\lambda - \gamma(\lambda)}.\label{eq:Fourier-prop}
  \end{equation}
\end{definition}
As is noted in~\cite[eq. (26)]{MR}, if~\eqref{eq:Fourier-prop} holds then we always have
\begin{equation}
  \gamma(\lambda)\leq \lambda/2.\label{eq:bestbound-gamma}
\end{equation}

We define the following two ``distortion'' parameters on the number system:
\begin{equation}
  \label{eq:def-q-rh}
  \rh := \max_{\pi\in G_K}\frac{d \log |q^\pi|}{\log N(q)} \geq 1,
\end{equation}
\begin{equation}
  \label{eq:def-q-rb}
  \rb := \min_{\pi\in G_K}\frac{d \log |q^\pi|}{\log N(q)} > 0.
\end{equation}
Note that the inequality in~\eqref{eq:def-q-rh} is obvious, and that the inequality in~\eqref{eq:def-q-rb} follows from the assumption, made in Section~\ref{sec:setting}, that the Galois conjugates of~$q$ have moduli~$>1$, in other words~$\abs{x^\pi}>1$ for all~$\pi \in G_K$.

As a consequence of~\eqref{eq:def-q-rb}, the multiplication matrix associated to~$q^{-1}$ has spectral radius at most~$N(q)^{-\rb}<1$ (it is asymptotically contractant). We will use repeatedly the Gelfand inequality in the form
\begin{equation}
  \label{eq:Gelfand}
  \|q^\lambda\| \ll_q \lambda^{d-1} N(q)^{\rh\lambda},  \qquad  \|q^{-\lambda}\| \ll_q \lambda^{d-1} N(q)^{-\rb\lambda},
\end{equation}
for~$\lambda\in\N_{>0}$, see~\cite[Lemma~2.3]{Wirth98}.

\subsection{Main result}

Our main result is the proof of the following statement, which shows that the analogue of~\cite{MR} holds in number fields in the most general formulation.

\begin{theorem}\label{thm:meanvalue-prime}
  Assume that~$\Ok$ is principal,~$(q, \cD)$ is a FNS, and~$f:\O\to\C$ has the Carry and Fourier properties with the above notations, and~$c\geq 20\rh\rb^{-1}$. There exist~$C, \delta, \eta_2>0$, with~$\delta \asymp \eta_1\eta_2 d^{-1}\min\{\eta_1\eta_2, \theta\}$, such that for all~$\lambda\in\N_{>0}$, we have
  \begin{equation}\label{eq:meanvalue-prime}
    \ssum{p\in\cN_\lambda \\ p\text{ prime}} f(p) \ll_{K,q,\cD} \lambda^{C} N(q)^{\lambda - \delta\gamma(\frac\lambda{100\rh\rb^{-1}})}.
  \end{equation}
\end{theorem}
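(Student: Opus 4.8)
The plan is to follow the Mauduit–Rivat strategy from~\cite{MR}, adapted to the lattice~$\O$, via a Vaughan-type decomposition of the prime sum into Type I and Type II bilinear sums, and then to reduce the estimation of those bilinear sums to exponential sums over~$\cN_\lambda$ that can be controlled using the Carry and Fourier properties. First I would replace the restriction ``$p\in\cN_\lambda$'' by a smoother count: since~$\cN_\lambda$ is essentially the set of~$n\in\O$ of length~$\leq\lambda$, and using~\eqref{eq:F-subsup}, one has~$q^{-\lambda}\cN_\lambda$ trapped between two balls of radii~$\asymp R_\cF^\pm$ in~$\R^d$, and more precisely~$\cN_\lambda$ is, up to a negligible boundary, the set of lattice points of~$\O$ inside~$q^\lambda\cF$. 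This is where the skewing phenomenon enters: to count~$\O\cap q^\lambda\cF$ and its translates efficiently one must use Dirichlet's unit theorem to multiply by a suitable unit~$u$ so that~$u q^\lambda$ has all conjugates of comparable size, reducing the distortion governed by~$\rh,\rb$ in~\eqref{eq:def-q-rh}--\eqref{eq:def-q-rb} and~\eqref{eq:Gelfand}; the factors~$\rh\rb^{-1}$ in the hypothesis~$c\geq 20\rh\rb^{-1}$ and in the argument~$\tfrac{\lambda}{100\rh\rb^{-1}}$ of~$\gamma$ in~\eqref{eq:meanvalue-prime} are exactly the price one pays for this.

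Next I would run the combinatorial identity (Vaughan's identity over~$\O$, which is available since~$\O$ is a UFD so the zeta function of~$K$ has an Euler product and the Möbius/von Mangoldt formalism goes through) to write
$$
\ssum{p\in\cN_\lambda \\ p\text{ prime}} f(p) \ll (\log) \cdot \bigl( |S_{\mathrm{I}}| + |S_{\mathrm{II}}| \bigr),
$$
where~$S_{\mathrm I}$ is a sum of the shape~$\sum_{m}a_m\sum_{n: mn\in\cN_\lambda} f(mn)$ with~$m$ in a short range, and~$S_{\mathrm{II}}$ is a genuine bilinear sum~$\sum_{m}\sum_{n}a_m b_n f(mn)$ with both variables in longer ranges. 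For~$S_{\mathrm{I}}$, the inner sum over~$n$ runs over~$n\in q^{-k}(\cN_\lambda - r)$ for the various residues, and after the carry manipulation~\eqref{eq:carry-prop} one can truncate~$f$ to~$f_{\kappa+\rho}$ at a controlled cost~$O(N(q)^{\lambda-\eta_1\rho})$, detect the digit constraints by additive characters~$\e^{2\pi i\tc{tv}}$, $t\in\Ov/q^\cdot$, and apply the Fourier property~\eqref{eq:Fourier-prop} together with completion of the character sum; the $L^2$ bound on~$\hat\chi_\cF$ (obtained from Parseval, as promised in the overview) is what lets one sum over the dual characters without losing a power of~$N(q)$. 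For~$S_{\mathrm{II}}$ I would apply Cauchy–Schwarz in the~$m$ variable, open the square, and arrive at a double sum over~$n_1,n_2$ of~$\sum_{m} f(mn_1)\bar{f(mn_2)}$; here one again invokes the Carry property to replace~$f$ by a truncation depending only on a bounded window of digits, so that~$f(mn_1)\bar{f(mn_2)}$ becomes (up to a small exceptional set) a function of~$m$ modulo~$q^{\kappa+\rho}$ twisted by the high digits, and then the Fourier property in the~$m$-sum gives the saving~$N(q)^{-\gamma(\cdot)}$. Throughout, the dilations~$q^\kappa$ appearing in~\eqref{eq:carry-prop} and~\eqref{eq:Fourier-prop} are matched to the lengths of the~$m$- and~$n$-ranges, and the condition~$\kappa\leq c\lambda$ in the Fourier property forces~$c$ to be at least a fixed multiple of~$\rh\rb^{-1}$.

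Finally I would optimise. The carry steps cost~$N(q)^{\lambda-\eta_1\rho}$ and the Fourier steps gain~$N(q)^{-\gamma(\mu)}$ over ranges of length~$\mu\asymp\lambda$ diminished by the unit-skewing factor and by the exponents~$\eta_1\rho$, $\eta_2$ coming from how deep the Type~I/II ranges can be pushed; balancing~$\rho$ against the Fourier gain produces a net saving of size~$N(q)^{-\delta\gamma(\lambda/(100\rh\rb^{-1}))}$ with~$\delta\asymp\eta_1\eta_2 d^{-1}\min\{\eta_1\eta_2,\theta\}$ as stated, the extra~$d^{-1}$ and the~$\min$ with~$\theta=\rb$ being artefacts of the Gelfand inequality~\eqref{eq:Gelfand} and the lattice-point counting in the skewed regions, while the polynomial factor~$\lambda^C$ absorbs the logarithms from Vaughan's identity and the~$\lambda^{d-1}$ from~\eqref{eq:Gelfand}. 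I expect the main obstacle to be the $S_{\mathrm{II}}$ estimate combined with the lattice-point counting: ensuring that after Cauchy–Schwarz the diagonal and off-diagonal contributions in~$(n_1,n_2)$ are correctly separated, that the number system's carry graph (Section~\ref{sec:carry-propagation}) genuinely yields the Carry property with a usable~$\eta_1$, and that the skewing correction via units does not itself destroy the arithmetic structure needed to apply~\eqref{eq:Fourier-prop}; this is exactly the point where the number-field geometry departs most sharply from the classical case~$K=\Q$ of~\cite{MR}.
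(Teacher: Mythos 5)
Your overall architecture (a combinatorial decomposition into Type~I and Type~II sums over ideals, conversion to sums over elements with a unit chosen to balance the conjugates, input from the Carry and Fourier properties, final optimisation) is the same as the paper's, and your Type~I outline is broadly in the right spirit --- although the paper's Proposition~\ref{prop:type-i} actually runs through Poisson summation, the counting Lemma~\ref{lem:addchar-count} for proper additive characters and Huxley's large sieve over ideals (Lemma~\ref{lem:large-sieve}), rather than through an $L^2$ bound on $\hat\chi_\cF$; also, the paper deliberately replaces Vaughan's identity by a different combinatorial identity (Lemma~\ref{lem:comb-ident}, sorted by an ordering of prime ideals and separated via Iwaniec--Kowalski) precisely to avoid the partial summation over $\log N(n)$ that your Vaughan route would force, and it needs the explicit partition of unity of Lemma~\ref{lem:units} to pass from ideals to elements. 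These are differences of route, not gaps.

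The genuine gap is in your Type~II treatment. After a single Cauchy--Schwarz in $m$ you face correlations $\sum_m f(mn_1)\overline{f(mn_2)}$, and neither hypothesis lets you estimate these directly: the Fourier property~\eqref{eq:Fourier-prop} only bounds sums of $f(vq^\kappa)$ over full digit blocks $v\in\cN_\lambda$ against linear phases, and says nothing about $m\mapsto f(mn)$; for Rudin--Shapiro-type $f$ (the very case this formalism is designed for, with no $q$-additivity and only square-root cancellation), the carry-truncated correlation does not factor into sums of that shape after one Cauchy--Schwarz. The paper's Proposition~\ref{prop:type-ii} --- the core of the argument --- requires two van der Corput steps (Lemma~\ref{lem:van-der-corput}), the extraction of a middle window of digits through $g(n)=f_{\mu_2}(q^{\mu_0}n)\bar{f_{\mu_1}}(q^{\mu_0}n)$, the detection of $r_{\mu_0,\mu_2}(mn)$ by the periodized tile function $\psi_\sigma$ and its smoothed truncations $A_{\sigma,\tau},B_{\sigma,\tau}$ of Lemma~\ref{lem:Fourier-psi} (this is where the fractal boundary of $\cF$ and the carry constant $\eta_2$ genuinely enter: the ``negligible boundary'' you invoke is exactly the estimate~\eqref{eq:fourier-bound-volV2}, not a free fact), a diagonal/off-diagonal separation supported by the lattice lemmas, and crucially the incomplete $L^2$ bound on $\hat g$ of Proposition~\ref{prop:Fourier-middle} (resting on the middle-digit Fourier estimate of Lemma~\ref{lem:FourierProp-mid}) --- this is where the Parseval/$L^2$ input you allude to is actually used. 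None of this machinery is recoverable from your one-line claim that ``the Fourier property in the $m$-sum gives the saving'', so as written the proposal does not prove the theorem.
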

The constant~$\eta_2$ is a natural parameter associated the addition automaton of the NFS~$(q, \cD)$; in particular it depends only on~$(q, \cD)$. It is formally introduced below in Lemma~\ref{lem:carry}. In Appendix~\ref{sec:asympt-behav-addit} below, we study the asymptotic behaviour of this constant in infinite families of canonical number systems~$q = -m+x$, $m\in\N$, $m\to \infty$.

\subsection{Plan of the paper}

After compiling technical lemmas in Section~\ref{sec:lemmas}, we state and prove our type~I and~II estimates in Sections~\ref{sec:type-i-sums} and~\ref{sec:type-ii-sums}. We then prove Theorem~\ref{thm:meanvalue-prime} in Section~\ref{sec:sums-over-prime}, and deduce Theorems~\ref{thm:main-thue} and \ref{thm:main-rudin} in Section~\ref{sec:two-arithm-appl}. Appendix~\ref{sec:asympt-behav-addit} is concerned with a subsidiary result on asymptotic behaviour of carry constants.

\subsection{Notations}

In the sequel, we abbreviate
$$ Q := N(q), $$
$$ \e(z) := \e^{2\pi i z}. $$
It will also be useful to denote, for~$\lambda\in\N$ and~$t\in K$,
\begin{equation}\label{eq:def-elambdat}
  \e_\lambda(t) = \e\Big(\tc{\frac{t}{q^\lambda}}\Big).
\end{equation}
We recall the definitions~\eqref{eq:F-subsup}, and we let further
\begin{equation}\label{eq:F-norm}
  R_\cF^* = \sup_{x\in\cF} \prod_{\pi\in G_K}(1+\abs{x^\pi}).
\end{equation}

All implied constants will be allowed to depend on~$K$, $q$ and~$\cN$, unless otherwise stated.

\section{Lemmas}\label{sec:lemmas}

On many occasions, we will use the following simple bounds on norms of products.

\begin{lemma}\label{lem:norms}\ 
  \begin{enumerate}
    \item For all~$x, y\in K$, $\|xy\| \ll \|x\| \|y\|$,
    \item For all~$x\in K\smallsetminus\{0\}$, $\|x^{-1}\| \ll N(x)^{-1} \|x\|^{d-1} $.
  \end{enumerate}
\end{lemma}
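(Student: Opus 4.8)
The plan is to prove the two norm estimates of Lemma~\ref{lem:norms} by reducing everything to multiplication matrices in the fixed $\Z$-basis $(\omega_1, \dotsc, \omega_d)$, and then invoking equivalence of norms on the finite-dimensional space $\R^d$ together with the adjugate formula for matrix inverses. The key observation is that for a fixed $x \in K$, the map $y \mapsto xy$ is $\Q$-linear on $K$, hence given by a matrix $M_x \in M_d(\Q)$ with respect to the basis $(\omega_j)$; moreover the entries of $M_x$ are $\R$-linear (in fact rational-linear) functions of the coordinates $\iota^{-1}(x)$, so $\|M_x\|_{\mathrm{op}} \ll \|x\|$ for any fixed choice of matrix operator norm, by equivalence of norms on the space of $d \times d$ matrices and on $\R^d$. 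All implied constants here depend only on $K$ and the chosen basis, which is permitted.

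For part~(1), I would write $\|xy\| = \|\iota^{-1}(xy)\| = \|M_x \iota^{-1}(y)\| \leq \|M_x\|_{\mathrm{op}} \|\iota^{-1}(y)\| \ll \|x\| \|y\|$, where the first inequality is the definition of the operator norm (after fixing compatible norms, or simply absorbing the comparison constants into $\ll$). This is essentially immediate once the matrix picture is set up.

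For part~(2), the point is that $M_{x^{-1}} = M_x^{-1}$, and Cramer's rule gives $M_x^{-1} = \det(M_x)^{-1}\, \mathrm{adj}(M_x)$, where $\mathrm{adj}(M_x)$ is the adjugate (transpose of the cofactor matrix). Now $\det(M_x)$ is exactly the field norm $N_{K/\Q}(x) = N(x)$ (the determinant of the multiplication map is the norm), and each entry of $\mathrm{adj}(M_x)$ is a $(d-1)\times(d-1)$ minor of $M_x$, hence a homogeneous polynomial of degree $d-1$ in the entries of $M_x$; consequently $\|\mathrm{adj}(M_x)\|_{\mathrm{op}} \ll \|M_x\|_{\mathrm{op}}^{d-1} \ll \|x\|^{d-1}$. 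Combining, $\|x^{-1}\| = \|M_x^{-1}\iota^{-1}(1)\| \ll \|M_x^{-1}\|_{\mathrm{op}} \ll |N(x)|^{-1}\|x\|^{d-1}$, using that $\iota^{-1}(1)$ is a fixed vector. (One should note $N(x) \neq 0$ precisely because $x \neq 0$, so the division is legitimate; and $N(x)^{-1}$ here really means $|N(x)|^{-1}$, matching the convention in the statement.)

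I do not anticipate a genuine obstacle: the only mild care needed is bookkeeping about which norms are being compared and making sure all the comparison constants are uniform in $x$, which they are because the basis is fixed once and for all. If one wants to avoid adjugates entirely in part~(2), an alternative is to bound $\|M_x^{-1}\|_{\mathrm{op}}$ via $\|M_x^{-1}\|_{\mathrm{op}} \leq \|M_x^{-1}\|_{\mathrm{F}}$ and estimate the Frobenius norm by Hadamard-type bounds on the cofactors, but the adjugate route is cleanest. Either way the estimate $\ll N(x)^{-1}\|x\|^{d-1}$ drops out directly, and it is sharp in the skew regime (e.g.\ for $x = q^\lambda$ it recovers the Gelfand-type bound in~\eqref{eq:Gelfand} up to the polynomial factor, which comes from a more refined spectral analysis not needed for this lemma).
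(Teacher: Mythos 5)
Your proof is correct, and it takes a (mildly) different route from the paper's. The paper disposes of part (1) as obvious --- your operator-norm remark is exactly the intended justification --- and for part (2) it argues arithmetically: from $N(x)=\prod_{\pi\in G_K}x^\pi$ one gets $x^{-1}=N(x)^{-1}\prod_{\pi\neq\id}x^\pi$, and each coordinate of $x^{-1}$ is read off as $\tc{\omega^\vee_k x^{-1}}$ via the dual basis; expanding every conjugate as $\sum_i x_i\omega_i^\pi$ turns this into a sum of $(d-1)$-fold products of the coordinates of $x$ against fixed traces depending only on $K$ and the basis, whence the bound $N(x)^{-1}\|x\|^{d-1}$ coordinatewise. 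Your Cramer/adjugate computation is the matrix avatar of the same identity: $\mathrm{adj}(M_x)=N(x)M_x^{-1}$ is precisely the multiplication matrix of $N(x)x^{-1}=\prod_{\pi\neq\id}x^\pi$, and the degree-$(d-1)$ homogeneity of its entries in $\iota^{-1}(x)$ is what the paper's trace expansion exhibits explicitly. What your version buys is that it stays entirely inside $M_d(\Q)$: no conjugates (and no elements of the normal closure, which the paper's traces $\tc{\omega^\vee_k\prod_{\pi\neq\id}\omega_{i_\pi}}$ implicitly involve when $K/\Q$ is not Galois) ever appear, only the structure constants of the basis. The paper's version is shorter and makes the arithmetic origin of the factor $N(x)^{-1}$ transparent. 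Your handling of signs ($N(x)^{-1}$ read as $|N(x)|^{-1}$) matches the paper's implicit convention. The closing aside about recovering \eqref{eq:Gelfand} for $x=q^\lambda$ is only heuristic --- for $d\geq 2$ and skew $q$ the lemma applied to $q^\lambda$ is in general weaker than \eqref{eq:Gelfand}, which rests on a genuinely spectral estimate --- but nothing in your proof depends on that remark.
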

\begin{proof}
  The first part is obvious. The second part follows from~$N(x) = \prod_{\pi\in G_K} x^{\pi}$. Indeed, writing~$x = \sum_{i=1}^d x_i \omega_i$ with~$x_i\in\Q$, then for any~$k\in\{1, \dotsc, d\}$, we have
  \begin{align*}
    \tc{\omega^\vee_k x^{-1}} ={}& N(x)^{-1} \tc{\omega^\vee_k \sum_{(i_\pi)_{\pi\neq \id}} \prod_{\pi\neq\id} x_{i_\pi} \omega_{i_\pi}} \\
    ={}& N(x)^{-1}\sum_{(i_\pi)_{\pi\neq \id}}  \Big(\prod_{\pi\neq\id} x_{i_\pi}\Big)  \tc{\omega^\vee_k \prod_{\pi\neq\id}\omega_{i_\pi}} \\
    \ll{}& N(x)^{-1} \|x\|^{d-1}.
  \end{align*}
\end{proof}

We also state now an upper-bound for the number of units in a certain angle.
\begin{lemma}\label{lem:count-units}
  For all~$x\in K^\ast$, we have
  $$ \card\big\{\ee\in\Ok^\ast, \|\ee / x\| \leq 1 \big\} \ll (\log(2 + N(x)))^{d-1}, $$
  where the implicit constant may depend on~$K$ and~$\|\cdot\|$.
\end{lemma}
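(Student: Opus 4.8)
The plan is to reduce the problem to a covering argument for the logarithmic embedding of the unit group, using Dirichlet's unit theorem. Write $r_1, r_2$ for the number of real and complex places of $K$, so that the unit rank is $\rho := r_1 + r_2 - 1$, and let $L : K^\ast \to \R^{\rho}$ denote the (truncated) logarithmic embedding $L(y) = (\log|y^{\sigma_1}|, \dots, \log|y^{\sigma_\rho}|)$ over a choice of $\rho$ archimedean places. By Dirichlet's theorem, $L(\Ok^\ast)$ is a full-rank lattice $\Gamma$ in $\R^\rho$, and two units have the same image under $L$ only if they differ by a root of unity, of which there are $O_K(1)$. Hence it suffices to bound the number of lattice points of $\Gamma$ lying in the image $L(S_x)$ of the set $S_x := \{\ee\in\Ok^\ast : \|\ee/x\|\le 1\}$.

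The key geometric observation is that $\|\ee/x\|\le 1$, combined with the fact that $\|\cdot\|$ is comparable to the sup-norm of the archimedean embeddings (all norms on $\R^d$ are equivalent, and $y\mapsto(y^{\sigma})_\sigma$ is a linear isomorphism onto its image), forces $|\ee^{\sigma}| \ll |x^{\sigma}|$ for every archimedean place $\sigma$; conversely the product formula (or rather $|N(\ee)| = 1$) gives a matching lower bound $|\ee^{\sigma}|\gg \prod_{\tau\ne\sigma}|x^{\tau}|^{-1} \gg N(x)^{-O(1)}$ up to the implied constants. Therefore $L(S_x)$ is contained in a box in $\R^\rho$ whose side length in each coordinate is $O(\log(2+N(x)))$: indeed each coordinate $\log|\ee^{\sigma_i}|$ lies in an interval of the form $[\,-C\log(2+N(x))-C,\ C\log(2+N(x))+C\,]$. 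A box of side length $\ell$ in $\R^\rho$ contains $O((1+\ell)^\rho)$ points of the fixed lattice $\Gamma$, so we get $\card S_x \ll (\log(2+N(x)))^{\rho} \ll (\log(2+N(x)))^{d-1}$, since $\rho = r_1+r_2-1 \le d-1$; if one wants the exponent exactly $d-1$ one simply notes $(\log(2+N(x)))^{\rho}\le (\log(2+N(x)))^{d-1}$ once $N(x)\ge 1$, and for $N(x)$ bounded the set is $O(1)$ anyway since $N(x)\in\Z_{\ge 1}$ forces $N(x)\ge 1$, making $\log(2+N(x))\ge\log 3$.

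The only mild subtlety — and the step I would be most careful about — is the passage between the chosen norm $\|\cdot\|$ on $\R^d$ (via the $\Z$-basis $(\omega_i)$ of $\Ok$) and the archimedean absolute values $|\cdot^\sigma|$: one must invoke that the map $\iota^{-1}$ followed by the conjugate embeddings is a fixed linear isomorphism, so that $\|y\|\asymp_K \max_\sigma |y^\sigma|$ for $y\in K$, with constants depending only on $K$ and $\|\cdot\|$, exactly as permitted in the statement. Everything else is the standard Dirichlet-unit-theorem lattice-point count, and no cancellation or delicate estimate is needed.
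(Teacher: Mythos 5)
Your overall strategy is the same as the paper's: reduce to Dirichlet's unit theorem and count points of the logarithmic-embedding lattice of $\Ok^\ast$ in a region of size $O(\log(2+N(x)))$. But one step is false as written. Without first normalising $x$, the chain $|\ee^{\sigma}|\gg \prod_{\tau\ne\sigma}|x^{\tau}|^{-1}\gg N(x)^{-O(1)}$ breaks at the second inequality: $\prod_{\tau\ne\sigma}|x^{\tau}|^{-1}=|x^{\sigma}|/|N(x)|$, which is not bounded below by any fixed power of $N(x)$ when $|x^{\sigma}|$ is tiny, and consequently your assertion that each coordinate $\log|\ee^{\sigma_i}|$ lies in $[-C\log(2+N(x))-C,\,C\log(2+N(x))+C]$ is wrong in general. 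Concretely, in a real quadratic field with fundamental unit $\epsilon>1$, take $x=2\epsilon^{k}$, so $|x^{\sigma_1}|=2\epsilon^{k}$, $|x^{\sigma_2}|=2\epsilon^{-k}$ and $N(x)=4$: the units with $\|\ee/x\|\le 1$ are exactly the $\pm\epsilon^{j}$ with $j=k+O(1)$, and for these $\log|\ee^{\sigma_1}|\approx k\log\epsilon$, which is nowhere near the interval of radius $O(\log(2+N(x)))=O(1)$ around the origin. So $L(S_x)$ is contained in a box of side $O(\log(2+N(x)))$, but not a box centred at $0$; the centre drifts with $x$.

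The damage is localised, and the repair is precisely the paper's opening move: since $\ee/x=(\ee u)/(xu)$, the count is invariant under $x\mapsto xu$ for a unit $u$, so one may first choose $u$ with $|(xu)^{\pi}|\asymp N(x)^{1/d}$ for all embeddings $\pi$ (``shifting by a suitable unit''), after which your centred-interval claim becomes correct. Alternatively, keep $x$ as it is and observe that your upper and lower bounds actually give $\log|\ee^{\sigma}|\in[\log|x^{\sigma}|-\log|N(x)|-O(1),\,\log|x^{\sigma}|+O(1)]$, an interval of length $O(\log(2+N(x)))$ (using that $S_x\ne\varnothing$ forces $|N(x)|\gg 1$); since the number of points of the fixed lattice $\Gamma=L(\Ok^\ast)$ in a box depends only on its side lengths and not on its position — and your counting statement is indeed phrased for an arbitrary box — the bound $O((\log(2+N(x)))^{\rho})\ll(\log(2+N(x)))^{d-1}$ follows. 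One further small point: for $x\in K^\ast$ the norm $N(x)$ is a nonzero rational, not an element of $\Z_{\ge1}$, so that remark is unavailable; it is also unnecessary, since $\log(2+N(x))\ge\log 2$ suffices to absorb the exponent change from $\rho$ to $d-1$ into the implied constant.
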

\begin{proof}
  Shifting by a suitable unit (as in~\cite[p.55, eq.~(1.4)]{Murty2007}), we may assume that~$\abs{x^\pi} \asymp N(x)^{1/d}$ for all~$\pi\in G_K$. The condition~$\|\ee/x\|\leq 1$ then implies~$\abs{\ee^\pi} \ll N(x)^{1/d}$. Since~$\ee\in\Ok^\ast$, we also deduce~$\abs{\ee^\pi} = \prod_{\pi'\neq \pi} \abs{\ee^{\pi'}}^{-1} \gg N(x)^{1/d-1}$. Let~$(\ee_1, \dotsc, \ee_r)$ be a~$\Z$-basis of the free part of~$\Ok^\ast$~\cite[Theorem~I.7.3]{Neukirch1999} (where~$r \leq d-1$). We are then reduced to counting the number of tuples~$(n_1, \dotsc, n_r)\in\Z^r$ such that
  $$ \sum_{j=1}^r n_j \log|\ee_j^\pi| = O(\log(2+N(x))) $$
  for all~$\pi\in G_K$. Inverting this condition by using a subset of embeddings of size~$r$ as in~\cite[p.55]{Murty2007}, we find that there are at most~$O(\log(2+N(x))^r)$ solutions, whence the claimed bound.
\end{proof}

\subsection{Additive characters and van der Corput's inequality}

\subsubsection{Orthogonality}

We recall the following orthogonality relations.
\begin{lemma}\label{lem:orthog}
  For all~$q\in\O\smallsetminus\{0\}$ and~$\xi\in\Ov$, we have
  $$ \frac1{Q}\sum_{n\in\Ok/q} \e( \tc{q^{-1}n\xi} ) = \begin{cases} 1 & \text{if } \xi\in q \Ov, \\ 0 & \text{otherwise,} \end{cases} $$
  and similarly, for all~$n\in\O$,
  $$ \frac1{Q}\sum_{\xi\in\Ov/q} \e( \tc{q^{-1}\xi n} ) = \begin{cases} 1 & \text{if } n\in q \O, \\ 0 & \text{otherwise.} \end{cases} $$
\end{lemma}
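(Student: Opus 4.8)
The plan is to reduce both statements to the standard finite-group orthogonality relation for characters of the quotient group~$\Ok/(q)$, suitably paired with~$\Ov/(q)$ via the trace form. First I would fix a key observation: for~$\xi\in\Ov$ and~$n\in\O$, the quantity~$\e(\tc{q^{-1}n\xi})$ depends only on the class of~$n$ in~$\Ok/(q)$ (and, symmetrically, only on the class of~$\xi$ in~$\Ov/(q)$). Indeed, if~$n' = n + qm$ with~$m\in\O$, then~$\tc{q^{-1}n'\xi} - \tc{q^{-1}n\xi} = \tc{m\xi}$, which lies in~$\Z$ because~$m\in\O$ and~$\xi\in\Ov$; hence~$\e$ of the difference is~$1$. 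So the first sum is a genuine sum over the finite abelian group~$A := \Ok/(q)$, which has order~$Q = N(q)$.

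Next I would exhibit~$n\mapsto \chi_\xi(n) := \e(\tc{q^{-1}n\xi})$ as a character of~$A$: additivity of the trace gives~$\chi_\xi(n_1+n_2) = \chi_\xi(n_1)\chi_\xi(n_2)$, and by the previous paragraph it is well-defined on~$A$. The orthogonality relation~$\frac1{|A|}\sum_{n\in A}\chi(n)$ equals~$1$ if~$\chi$ is the trivial character and~$0$ otherwise. So it remains to identify when~$\chi_\xi$ is trivial, i.e.\ when~$\tc{q^{-1}n\xi}\in\Z$ for all~$n\in\O$. By definition of the dual lattice, $\tc{q^{-1}n\xi}\in\Z$ for all~$n\in\O$ is equivalent to~$q^{-1}\xi \in \Ov$, that is~$\xi\in q\Ov$. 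This gives exactly the first displayed formula. For the second formula, the roles of~$\O$ and~$\Ov$ are interchanged: one checks that~$\xi\mapsto\e(\tc{q^{-1}\xi n})$ is well-defined on~$\Ov/(q)$ (if~$\xi' = \xi + qm'$ with~$m'\in\Ov$, the difference of traces is~$\tc{m'n}\in\Z$ since~$n\in\O$, $m'\in\Ov$), it is a character of the finite group~$\Ov/(q)$ — which also has order~$Q$, since~$\Ov$ is a full-rank lattice and multiplication by~$q$ scales covolume by~$|N(q)|$ — and it is trivial precisely when~$q^{-1}n\in\O$, i.e.\ $n\in q\O$. Then the same finite-group orthogonality relation finishes the proof.

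The only point requiring a little care — and the closest thing to an obstacle — is the bookkeeping that both index sets~$\Ok/(q)$ and~$\Ov/(q)$ have exactly~$Q = N(q)$ elements, so that the normalising factor~$\frac1Q$ is correct in both identities; this follows from the fact that~$\Ov$ is a fractional ideal (a full-rank lattice in~$K\otimes\R$) together with the standard index formula~$[\Lambda : q\Lambda] = |N(q)|$ for any full-rank lattice~$\Lambda$ stable under multiplication by~$q$. Everything else is the routine translation of Pontryagin duality for finite abelian groups into the language of the trace pairing, and no number-field-specific subtlety intervenes beyond the defining property~$\tc{\O\cdot\Ov}\subset\Z$.
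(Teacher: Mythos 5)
Your proof is correct: the paper states this lemma without proof, as a standard fact, and your argument — viewing $n\mapsto\e(\tc{q^{-1}n\xi})$ and $\xi\mapsto\e(\tc{q^{-1}\xi n})$ as characters of the finite groups $\Ok/(q)$ and $\Ov/(q)$ of order $|N(q)|$, well-defined by $\tc{\O\cdot\Ov}\subset\Z$, and trivial exactly when $\xi\in q\Ov$ (resp.\ $n\in q\O$, using biduality of the trace pairing) — is exactly the standard justification the paper implicitly relies on. The one point you rightly flag, that both quotients have $Q=|N(q)|$ elements via the index formula for full-rank lattices, is handled correctly.
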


\subsubsection{Counting additive characters}

In Section~\ref{sec:type-i-sums} below, we will require properties of additive characters in~$\Ok$, which we quote from~\cite[p.~179]{Huxley1968}. We recall that given an integral ideal~$\fm$ and an additive character~$\sigma\pmod{\fm}$, we say that~$\sigma$ is a \emph{proper} additive character modulo~$\fm$ if~$\sigma$ is not periodic~$\fn$ for any integral ideal~$\fn\supsetneq\fm$.

We will mainly work with additive characters of the form~$n\mapsto\e(\tc{nk/m})$, for~$m\in\Ok$, $k\in\Ov/m$ and $m\neq 0$. In this context, given an additive character~$\sigma$, let us denote
$$ (k, m) \sim \sigma \quad \iff \quad \forall n\in\Ok, \sigma(n) = \e(\tc{nk/m}). $$
Note that for any such~$k$ and~$m$, there is a unique pair~$(\fm, \sigma)$, where~$\fm$ containing~$m$, and a unique proper additive character~$\sigma\mod{\fm}$, such that~$(k, m)\sim \sigma$.
\begin{lemma}\label{lem:addchar-count}
  Let~$\mu\in\N_{>0}$, and~$\sigma\mod{\fm}$ be a proper additive character. Then
  $$ \ssum{m\in\cN_\mu \\ k\in\Ov/m \\ (k, m)\sim \sigma} \frac1{N(m)} \ll \frac{\mu^d}{N(\fm)}. $$
\end{lemma}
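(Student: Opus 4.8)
The plan is to understand, given a fixed proper additive character $\sigma \bmod \fm$, exactly which pairs $(k,m)$ with $m\in\cN_\mu$ and $k\in\Ov/m$ satisfy $(k,m)\sim\sigma$, and then sum the weights $1/N(m)$ over them. The key structural fact is that $(k,m)\sim\sigma$ forces $\fm \mid (m)$, i.e.\ $m \in \fm$; indeed, $\sigma$ is periodic modulo $(m)$ (because $n\mapsto \e(\tc{nk/m})$ is), and $\fm$ is by definition the largest integral ideal modulo which $\sigma$ is periodic, so $(m)\subset\fm$. Conversely, for each multiple $m\in\fm$ the character $n\mapsto\sigma(n)$ is of the form $\e(\tc{nk/m})$ for a \emph{unique} residue $k\in\Ov/m$: uniqueness holds because if $\e(\tc{n(k-k')/m})=1$ for all $n\in\Ok$ then $(k-k')/m\in(\Ov)^\vee$ in the appropriate pairing, which by \eqref{eq:Ok-lattice-ident} and duality forces $k\equiv k' \pmod m$ in $\Ov/m$ (here one uses that $\tc{\cdot}$ restricted to $\Ov\times\Ok$ is a perfect pairing modulo $\Z$). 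Hence the inner double sum collapses to
$$ \ssum{m\in\cN_\mu \\ k\in\Ov/m \\ (k,m)\sim\sigma} \frac1{N(m)} = \ssum{m\in\cN_\mu \\ m\in\fm} \frac1{N(m)}. $$

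It therefore remains to bound $\sum_{m\in\cN_\mu,\ m\in\fm} N(m)^{-1}$ by $O(\mu^d/N(\fm))$. Since $\O$ is principal (as assumed throughout), write $\fm=(m_0)$ with $N(m_0)=N(\fm)$; then $m\in\fm$ means $m=m_0 m'$ for some $m'\in\O$, and $N(m)=N(\fm)N(m')$, so the sum equals $N(\fm)^{-1}\sum_{m'}N(m')^{-1}$ where $m'$ ranges over $\{m/m_0 : m\in\cN_\mu\cap(m_0)\}$. The point is that $\cN_\mu$ is contained in a ball of radius $O(\|q^\mu\|)\ll \mu^{d-1}Q^{\rh\mu}$ by \eqref{eq:Gelfand}, and contains $Q^\mu$ elements, so $\cN_\mu \cap (m_0)$ has at most $O(1 + Q^\mu/N(\fm))$ elements by a standard lattice-point count in the sublattice $(m_0)=m_0\O$ of index $N(\fm)$ (covering volume argument, using that the fundamental domain of $\O$ has fixed covolume). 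For each such $m'$ we have $N(m')\geq 1$, so crudely $\sum_{m'}N(m')^{-1} \leq \card(\cN_\mu\cap\fm) \ll 1 + Q^\mu/N(\fm)$; this already gives $N(\fm)^{-1}(1+Q^\mu/N(\fm))$, which is too weak when $N(\fm)$ is small compared to $Q^\mu$. To get the clean bound $\mu^d/N(\fm)$ one instead dyadically decomposes $\cN_\mu$ according to $N(m)\asymp 2^j$ (equivalently $\|m\|$ in dyadic ranges, using Lemma~\ref{lem:norms}), counts that $\{m\in\cN_\mu : N(m)\asymp 2^j\}\cap\fm$ has $O(\mu^{d-1} + 2^j/N(\fm) \cdot(\text{log factors}))$ elements by the same lattice argument applied in a ball of radius $O(2^{j/\rb})$ up to log powers, weight each dyadic block by $2^{-j}$, and sum over the $O(\mu\log Q)$ relevant values of $j$. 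The arithmetic-progression/lattice count in each block contributes $O(1)$ times $N(\fm)^{-1}$ plus a boundary term that sums (over $j$) to $O(\mu^{d-1})\cdot O(\mu) = O(\mu^d)$ after division by $N(\fm)$ is absorbed — more precisely the geometric-series structure makes the $2^j/N(\fm)$ terms sum to $O(\mu/N(\fm))$ while the $\mu^{d-1}$ boundary terms sum to $O(\mu^d)$, but since we always have $N(\fm)\leq Q^\mu$ when $\cN_\mu\cap\fm\neq\{0\}$, the whole thing is $\ll \mu^d/N(\fm)$.

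The main obstacle is precisely this last lattice-point estimate: the sublattice $m_0\O$ of $\O\cong\Z^d$ can be badly skewed (this is exactly the "skewing phenomenon" flagged in the overview), so the naive "volume plus $O(\text{surface area})$" bound for the number of lattice points of $m_0\O$ in a Euclidean ball is not immediately of the shape $O(1+\text{vol}/N(\fm))$ with acceptable dependence. The fix is to exploit that $\cN_\mu$ is not an arbitrary ball but the specific set $\{\sum_{j<\mu}b_jq^j\}$: one works in "$q$-adic coordinates" where $\cN_\mu$ is a genuine box $\cD^\mu$, and uses Dirichlet's unit theorem (as announced in the overview) to twist $m_0$ by a unit so that all conjugates $|m_0^\pi|$ are comparable to $N(\fm)^{1/d}$, killing the skew; then the count of $m\in\cN_\mu$ divisible by $m_0$ genuinely becomes $O(\mu^{d-1} + Q^\mu/N(\fm))$ with only the stated polynomial-in-$\mu$ loss, and the dyadic summation goes through. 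All other steps — the collapse of the character sum, principality, and the bookkeeping of log/polynomial factors — are routine.
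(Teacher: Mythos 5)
Your first step (the collapse to $\sum_{m\in\cN_\mu\cap\fm} N(m)^{-1}$, with at most one $k$ per $m$ and $\fm\mid(m)$ by properness) is exactly the paper's opening move and is fine. The gap is in the second step, and it is the real content of the lemma: you cannot decompose "according to $N(m)\asymp 2^j$ (equivalently $\|m\|$ in dyadic ranges)". Norm and height are \emph{not} equivalent once the unit rank is positive (any $K$ other than $\Q$ or an imaginary quadratic field): Lemma~\ref{lem:norms} only gives $\|m\|\gg N(m)^{1/d}$, and elements $\ee m_1$ with $\ee\in\Ok^\ast$ all have the same norm but arbitrarily large height, many of which can lie in $\cN_\mu$ (whose diameter is of size $Q^{\rh\mu/d}$, typically far larger than $Q^{\mu/d}$). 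Consequently the dyadic block $\{m\in\cN_\mu:\ N(m)\asymp 2^j\}$ is \emph{not} contained in a ball of radius $O(2^{j/\rb})$ (or any radius polynomial in $2^j$), so "the same lattice argument applied in a ball" does not produce the count $O(\mu^{d-1}+2^j\mu^{O(1)}/N(\fm))$ you need. Your proposed fix --- twisting the generator $m_0$ by a unit so that $|m_0^\pi|\asymp N(\fm)^{1/d}$ --- only normalizes the lattice $\fm=m_0\Ok$, which is never skewed anyway (this is the content of Lemma~\ref{lem:lattice-count}); it does nothing about the actual difficulty, namely the spread in unit directions of the elements of a fixed norm inside $\cN_\mu$. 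The paper handles precisely this point by sorting the $m$ according to the principal ideal $\fa=(m)$ and invoking Lemma~\ref{lem:count-units}: for each ideal $\fa$ there are at most $O(\mu^{d-1})$ generators lying in $\cN_\mu$ (a count of units in an angle, via Dirichlet's theorem), after which $\sum_{\fm\mid\fa,\,N(\fa)\ll Q^\mu} N(\fa)^{-1}\ll \mu/N(\fm)$ finishes the proof. Some substitute for this unit count is indispensable; your argument never supplies one.

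Two smaller points. First, even your preliminary estimate $\card(\cN_\mu\cap\fm)\ll 1+Q^\mu/N(\fm)$ does not follow from a covering-volume count in a ball containing $\cN_\mu$: that ball has volume $\asymp Q^{\rh\mu}$ (up to powers of $\mu$), not $Q^\mu$, so the naive count loses a factor $Q^{(\rh-1)\mu}$; the correct bound again comes from the ideal-plus-units argument. Second, the final bookkeeping is a non sequitur as written: from "the boundary terms sum to $O(\mu^d)$" and "$N(\fm)\leq Q^\mu$" you cannot conclude a bound $\ll\mu^d/N(\fm)$, since dividing by $N(\fm)$ is not free. This part is repairable: since $\fm\mid(m)$ forces $N(m)\geq N(\fm)$, every nonempty dyadic block has $2^j\gg N(\fm)$, so with the weight $2^{-j}$ the boundary terms sum to $O(\mu^{d-1}/N(\fm))$ --- but that repair still presupposes the per-block count, which is where the missing unit-counting input lives.
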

\begin{proof}
  For any~$m$ on the left-hand side, there can be at most one~$k\in\Ov/m$ for which~$(k, m)\sim\sigma$. Moreover, since~$\sigma$ is proper, this can only happen if~$\fm \mid m$. Therefore, the quantity on the left-hand side is at most~$\sum_{m\in\cN_\mu\cap \fm} \frac1{N(m)}$. We sort this sum according the principal ideal~$\fa = (m)$. First note that~$N(m) \leq R_\cF^\ast Q^\mu$ (we recall the definition~\eqref{eq:F-norm}). Then
  $$ \sum_{m\in\cN_\mu\cap \fm} \frac1{N(m)} \leq \ssum{\fa \text{ principal} \\\fm \mid \fa \\ N(\fa) \leq R_\cF^\ast Q^\mu} \frac1{N(\fa)} \ssum{m\in\cN_\mu \\ (m) = \fa} 1. $$
  For all~$\fa$ in the first sum, we pick a generator~$u\in\Ok$ such that~$|u^\pi| \asymp N(\fa)^{1/d}$ for all field imbedding~$\pi\in G_K$. Then the second sum is
  $$ \ssum{m\in\cN_\mu \\ (m) = \fa} 1 = \ssum{\ee \in \Ok^\ast \\ \ee u \in \cN_\mu} 1 \leq \ssum{\ee\in\Ok^\ast \\ \|\ee u / q^{\mu+C}\| \leq 1} \ll \mu^{d-1} $$
  for a constant~$C>0$ (depending on~$\cF$) and by Lemma~\ref{lem:count-units}. We deduce
  $$ \ssum{\fa \text{ principal} \\\fm \mid \fa \\ N(\fa) \leq R_\cF^* Q^\mu} \frac1{N(\fa)} \ssum{m\in\cN_\mu \\ (m) = \fa} 1
  \ll \mu^{d-1} \ssum{\fa \text{ ideal} \\\fm \mid \fa \\ N(\fa) \leq R_\cF^* Q^\mu} \frac1{N(\fa)} \ll \frac{\mu^d}{N(\fm)} $$
  as claimed.
\end{proof}

\subsubsection{Van der Corput's inequality}

For all~$\rho\in\N$, we define the set
\begin{equation}
  \Delta_\rho = \cN_\rho - \cN_\rho = \{ m - n, (m, n) \in \cN_\rho^2\}.\label{eq:def-Delta}
\end{equation}

\begin{lemma}\label{lem:van-der-corput}
  Let~$\rho, \kappa, \nu\in\N$ with~$\rho + \kappa \leq \nu$, and~$(z_n)_{n\in\Ok}$ be complex numbers satisfying~$z_n=0$ when~$n\not\in\cN_\nu$. There exists an even function~$w_\rho:\Ok\to\N$, such that~$|w_\rho(r)|\ll Q^\rho$ uniformly in~$r\in\Ok$, and
  $$ \Big|\sum_n z_n\Big|^2 \ll Q^{\nu - 2\rho} \sum_{r\in\Delta_\rho} w_\rho(r) \sum_n z_{n+q^\kappa r}\bar{z_n}. $$
\end{lemma}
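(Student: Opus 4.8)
The plan is to follow the classical van der Corput–Weyl differencing argument, adapted to the lattice $\Ok$ and the digit-block sets $\cN_\rho$. First I would introduce the auxiliary smoothing: since $z_n$ is supported on $\cN_\nu$ and $\rho+\kappa\leq\nu$, each $n$ in the support can be written (typically in many ways) as $n = m + q^\kappa s$ with $m\in\cN_\kappa$ and, crucially, with a shift parameter ranging over $\cN_\rho$. Concretely, for $s\in\cN_\rho$ one has $q^\kappa s \in q^\kappa\cN_\rho$, and I would write $\sum_n z_n = Q^{-\rho}\sum_{s\in\cN_\rho}\sum_n z_{n + q^\kappa s}$ — this identity holds because the map $s\mapsto q^\kappa s$ is injective and, by the finiteness property, translating the index $n$ by $q^\kappa s$ just permutes $\Ok$; summing over the $Q^\rho$ values of $s\in\cN_\rho$ and dividing by $Q^\rho$ recovers the original sum. (One must check the support condition: $z_{n+q^\kappa s}$ is nonzero only when $n + q^\kappa s \in\cN_\nu$, and since $q^\kappa\cN_\rho\subset\cN_{\kappa+\rho}\subseteq\cN_\nu$, the range of $n$ stays controlled.)

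Next I would apply Cauchy–Schwarz in the variable $n$, pulling the $Q^\rho$-fold inner sum out:
\[
  \Big|\sum_n z_n\Big|^2 = Q^{-2\rho}\Big|\sum_n \sum_{s\in\cN_\rho} z_{n+q^\kappa s}\Big|^2 \leq Q^{-2\rho}\,\#\{n : \text{relevant}\}\cdot \sum_n\Big|\sum_{s\in\cN_\rho} z_{n+q^\kappa s}\Big|^2 .
\]
The cardinality of the relevant set of $n$ is $O(Q^\nu)$ (it is contained in a fixed dilate of $\cN_\nu$, using $\#\cN_\nu = Q^\nu$ and \eqref{eq:def-Lambda}-type inclusions to absorb the $q^\kappa s$ shift), which produces the factor $Q^{\nu-2\rho}$. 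Expanding the square of the inner sum gives a double sum over $s, s'\in\cN_\rho$ of $z_{n+q^\kappa s}\bar{z_{n+q^\kappa s'}}$; setting $r = s - s'\in\Delta_\rho$ and reindexing $n\mapsto n + q^\kappa s'$, the sum over $n$ becomes $\sum_n z_{n+q^\kappa r}\bar{z_n}$. The number of pairs $(s,s')\in\cN_\rho^2$ with $s - s' = r$ defines the weight: set $w_\rho(r) := \#\{(s,s')\in\cN_\rho^2 : s-s' = r\}$. This is manifestly even (swap $s,s'$), takes values in $\N$, and is bounded by $\#\cN_\rho = Q^\rho$ uniformly in $r$, which is exactly the claimed bound on $w_\rho$.

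Assembling these pieces yields
\[
  \Big|\sum_n z_n\Big|^2 \ll Q^{\nu-2\rho}\sum_{r\in\Delta_\rho} w_\rho(r)\sum_n z_{n+q^\kappa r}\bar{z_n},
\]
as required. The only genuinely delicate point — and the step I would expect to need the most care — is the bookkeeping on supports and index ranges: one must verify that after translating by $q^\kappa s$ for $s\in\cN_\rho$, all the sums over $n$ remain over a set of size $O(Q^\nu)$, and that no boundary terms are lost when passing from $\sum_n z_n$ to the smoothed sum. This is where the hypothesis $\rho+\kappa\leq\nu$ and the inclusion $q^\kappa\cN_\rho\subset\cN_{\kappa+\rho}$ are used; everything else is formal manipulation of the double sum and Cauchy–Schwarz. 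In the rational case this is the standard lemma (cf.\ the corresponding statement in~\cite{MR}), and the number-field version differs only in that "interval of length $Q^\nu$" is replaced by "the finite set $\cN_\nu$ of cardinality $Q^\nu$", with dilates controlled via \eqref{eq:def-Lambda}.
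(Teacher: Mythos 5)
Your argument is correct and is essentially the same as the paper's: the paper simply invokes the standard differencing argument of Lemma~17 of~\cite{MR-Carres} (averaging over shifts $s\in\cN_\rho$, Cauchy--Schwarz with the cardinality of $\bigcup_{r\in\cN_\rho}(\cN_\nu - q^\kappa r)$, and expanding with the weight $w_\rho(r)=\card\{(r_1,r_2)\in\cN_\rho^2,\ r=r_1-r_2\}$), and uses the hypothesis $\kappa+\rho\leq\nu$ to bound that cardinality by $O(Q^\nu)$, exactly as you do. The details you supply, including the $O(Q^\nu)$ count via the inclusion $q^\kappa\cN_\rho\subset\cN_{\kappa+\rho}$ and \eqref{eq:def-Lambda}, match the paper's intended proof.
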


\begin{proof}
  By following the proof of~\cite[Lemma~17]{MR-Carres}, we find
  $$ \Big| \sum_n z_n \Big|^2 \leq \card\Big(\bigcup_{r\in\cN_\rho}(\cN_\nu - q^\kappa r)\Big) Q^{-2\rho} \sum_{r\in\Delta_\rho} w_\rho(r) \sum_n z_{n+q^\kappa r} \bar{z_n}, $$
  where~$w_\rho(r) = \card\{(r_1, r_2) \in \cN_\rho^2, r = r_1 - r_2\}$. The claimed bound follows by our hypothesis~$\nu \geq \kappa + \rho$, which implies that the sets~$q^{-\nu}(\cN_\nu - q^\kappa r)$ are uniformly bounded for~$r\in\cN_\rho$.
\end{proof}

\subsubsection{Majorants of the fundamental tile and Poisson summation}

We will rely on the Poisson summation formula: for any continuous function~$V_0:\R^d \to \C$ satisfying~$V_0(x) \ll (1+\|x\|)^{-d-1}$, with Fourier transform~${\hat V_0}(\xi) = \int_{\R_d} V_0(x)\e(\tc{\xi, x})\dd x$, any invertible linear map~$B$, and any~$t\in\R^d$ we have
$$ \sum_{n\in\Z^d} V_0(B^{-1} n) \e(\tc{n, t}) = {\det}(B) \sum_{\xi\in\Z^d} {\hat V_0}(B(\xi + t)). $$
We deduce, in particular, that for~$V_0$ as above and~$V := V \circ \iota$, any~$\eta\in K\smallsetminus\{0\}$ and~$t\in K$, we have
\begin{equation}
  \sum_{n\in\Ok} V\Big(\frac{n}{\eta}\Big)\e(\tc{nt}) = N(\eta) \sum_{\xi\in\Ov} {\hat V}(\eta(\xi + t)).\label{eq:poisson-K}
\end{equation}  

It will be convenient to work with smooth majorant functions having a compactly supported Fourier transform.

\begin{lemma}\label{lem:majo-fourier-trunc}
  For any bounded set~$B\subset\R^d$, there exists a function~$V_0:\R^d \to \R_+$ in the Schwartz class, depending on~$B$, satisfying the following:
  \begin{enumerate}
    \item for all~$x\in \R^d$, if~$x\in B$, then~$V_0(x) \geq 1$,
    \item the Fourier transform~${\hat V_0}(\xi) = \int_{\R_d} V_0(x)\e(\tc{\xi, x})\dd x$ vanishes unless~$\|\xi\|_\infty \leq 1$.
  \end{enumerate}
\end{lemma}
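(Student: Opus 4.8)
The plan is to take~$V_0$ to be (a rescaled multiple of) the squared modulus of the Fourier transform of a smooth, compactly supported bump; property~(ii) then comes for free from the fact that the Fourier transform of a product is a convolution, and property~(i) is arranged by dilation. Concretely, I would first fix an even, nonnegative, smooth function~$\psi:\R^d\to\R_+$, not identically zero, with~$\supp\psi\subset\{\xi:\|\xi\|_\infty\leq \tfrac12\}$ — for instance a product of one-dimensional mollifiers. Its Fourier transform~$\hat\psi$ is then real-valued (by evenness) and belongs to the Schwartz class, so~$g:=\hat\psi^{\,2}$ is a nonnegative Schwartz function. Since the double Fourier transform is reflection and~$\psi$ is even, one has~$\hat g=\psi\ast\psi$, whose support is contained in~$\supp\psi+\supp\psi\subset\{\xi:\|\xi\|_\infty\leq 1\}$. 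Thus~$g$ is already nonnegative, Schwartz, and has the required band-limited Fourier transform; only~(i) remains to be secured.

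For that, I would observe that~$g(0)=\bigl(\int_{\R^d}\psi\bigr)^2>0$, so by continuity there exist~$c_0,c_1>0$ with~$g(x)\geq c_1$ whenever~$\|x\|\leq c_0$. Since~$B$ is bounded, choose~$T\geq 1$ with~$B\subset\{x:\|x\|\leq c_0 T\}$ and set~$V_0(x):=c_1^{-1}g(x/T)$. Then~$V_0\geq 1$ on~$\{\|x\|\leq c_0T\}\supseteq B$, which is~(i), while~$\hat V_0(\xi)=c_1^{-1}T^d\,\hat g(T\xi)$ vanishes unless~$\|T\xi\|_\infty\leq 1$, i.e.\ unless~$\|\xi\|_\infty\leq 1/T\leq 1$, which is~(ii); and~$V_0$ is still nonnegative and Schwartz. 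That completes the argument.

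There is no genuine obstacle here — the statement is soft, and the only point to keep in mind is that the Fourier transform of a nonnegative~$L^1$ function is maximized at the origin (so~$|\hat\psi|\leq\hat\psi(0)$), which is precisely why a single unscaled bump cannot be made~$\geq 1$ on a set larger than a point: one must first dilate~$g$ so that its central lobe flattens out over~$B$, and only then normalize the amplitude by the factor~$c_1^{-1}$.
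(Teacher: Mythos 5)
Your construction is correct, and it is essentially the same as the paper's: a nonnegative band-limited majorant obtained by the classical Fejér-type trick (here $V_0$ is a dilate of $\hat\psi^{\,2}$, so $\hat V_0$ is a dilate of $\psi\ast\psi$ and hence compactly supported), followed by rescaling so the central lobe covers $B$. The only difference is that the paper outsources the one-dimensional building block to Theorem~A.3 of Swaenepoel and tensorizes over coordinates, whereas you give the same construction self-contained directly in $\R^d$; both are fine.
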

\begin{proof}
  We take~$V_0(x) = \alpha \prod_{j=1}^d {\hat f}(\beta x_j)$, where~$f$ is given as in Theorem~A.3 of~\cite{Swaenepoel} for some small enough~$\beta>0$ and large enough~$\alpha$ depending on~$B$.
\end{proof}

\subsubsection{The large sieve inequality}

The following is a multidimensional version of the large sieve inequality, and corresponds to Theorem~2 of~\cite{Huxley1968}. The main difference lies in the scaling of the set of points: when dealing with ideals (rather than arbitrary lattices), we can avoid the ``skewing'' phenomenon refered to above. We refer to~\cite{Montgomery} for history and additional references on the large sieve.
\begin{lemma}\label{lem:large-sieve}
  Let~$\alpha\in K^\ast$, $X\in[1, \infty)$ and~$(c(n))_{n\in\Ok}$ be complex numbers. Then
  $$ \ssum{\fm\text{ ideal} \\ 0<N(\fm) \leq X} \ssum{\sigma\mod{\fm} \\ \text{proper}} \Bigg| \ssum{n\in\Ok \\ \|n/\alpha\| \leq 1} c(n) \sigma(n) \Bigg|^2 \ll (N(\alpha) + X^2) \ssum{n \in \Ok \\ \|n/\alpha\|\leq 1} \abs{c(n)}^2,  $$
  where in the sum on the left-hand side, $\sigma$ runs over proper additive characters~$\mod{\fm}$.
\end{lemma}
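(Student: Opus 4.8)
The plan is to reduce this statement to the classical large sieve over~$\Z^d$ as in Huxley's Theorem~2 of~\cite{Huxley1968}, by translating the sum over proper additive characters modulo ideals into a sum over well-spaced points of the dual torus. First I would identify, via~\eqref{eq:Ok-lattice-ident}, the inner sum $\sum_{\|n/\alpha\|\leq 1} c(n)\sigma(n)$ with a sum over a bounded region of~$\Z^d$ (the region $\{x\in\Z^d : \|\iota(x)/\alpha\|\leq 1\}$, which is contained in a box of side~$O(\|\alpha\|)$ but, crucially, after shifting~$\alpha$ by a suitable unit as in the proof of Lemma~\ref{lem:count-units}, can be taken comparable to a box of side~$O(N(\alpha)^{1/d})$ in each coordinate — this is exactly where the ``no skewing'' gain over an arbitrary lattice comes from). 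An additive character $\sigma\mod\fm$ with $(k,m)\sim\sigma$ acts as $n\mapsto\e(\tc{nk/m})$; writing $n=\iota(x)$ this becomes $\e(\langle x, \ell\rangle)$ for a rational point $\ell\in\Q^d$ determined by $k/m$ through the trace pairing and the dual basis. So the left-hand side is a sum over a set of rational points $\ell$ of $|\sum_x c(\iota(x))\e(\langle x,\ell\rangle)|^2$.

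Next I would establish the spacing and counting properties needed to invoke the classical multidimensional large sieve: (i) distinct proper characters modulo ideals of norm $\leq X$ give points $\ell$ that are $\delta$-separated on the torus $(\R/\Z)^d$ with $\delta\gg X^{-2/d}$ in a suitable sense — more precisely, their differences avoid a neighbourhood of~$0$ of the right size — and (ii) the number of such points is controlled. Both facts are standard consequences of the fact that a proper character modulo~$\fm$ has ``conductor'' $\fm$, so that $N(\fm)\leq X$ forces $\ell$ to lie in $\tfrac1{N(\fm)}\Z^d$-type lattices; the Farey-type dispersion estimate is precisely what Huxley sets up in~\cite[Section~3]{Huxley1968}. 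Once these are in place, the general large sieve inequality (Montgomery's form, or directly Huxley's Theorem~2) gives
$$ \ssum{\fm \\ N(\fm)\leq X} \ssum{\sigma\mod\fm \\ \text{proper}} \Big| \ssum{x\in\Z^d \\ \iota(x)\in\alpha\cdot B} c(\iota(x))\e(\langle x,\ell_\sigma\rangle)\Big|^2 \ll (L_1\cdots L_d + \delta^{-2d})\ssum{x} |c(\iota(x))|^2, $$
where $L_1,\dots,L_d$ are the side-lengths of the box containing the support. With the unit-shift normalization each $L_j\asymp N(\alpha)^{1/d}$, so $L_1\cdots L_d\asymp N(\alpha)$, and $\delta^{-2d}\ll X^2$ up to the implied constant; combining gives the bound $(N(\alpha)+X^2)\sum|c(n)|^2$ claimed. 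The dependence of implied constants on $K$, $q$, $\|\cdot\|$ is harmless and allowed.

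The main obstacle, as usual in transporting the large sieve to number fields, is the spacing estimate (i): one must show that if $\sigma_1\mod{\fm_1}$ and $\sigma_2\mod{\fm_2}$ are distinct proper characters with $N(\fm_i)\leq X$, then the corresponding torus points $\ell_1,\ell_2$ satisfy $\|\ell_1-\ell_2\|_{\R^d/\Z^d}\gg X^{-2/d}$ (or, what Huxley actually uses, a weighted count of near-coincidences is small). The subtlety is that $\ell_1-\ell_2$ corresponds to the character $\sigma_1\bar\sigma_2$, whose modulus is a divisor-related ideal of norm $\leq N(\fm_1)N(\fm_2)\leq X^2$, but one must check that the representation of this character as an element of $\tfrac1{N(\mathfrak{n})}\Z^d$ via the dual basis genuinely has a denominator bounded below in the way the lattice-point count requires — this is where the choice of integral basis $(\omega_j)$ and its dual $(\omega_j^\vee)$, together with the relation $\fD_K=(\Ov)^{-1}$, enters, contributing only constants depending on the discriminant. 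I would handle this exactly as in~\cite[pp.~178--180]{Huxley1968}, citing his Theorem~2 for the final inequality once the dictionary between proper additive characters modulo ideals and well-spaced rational points is set up; no new idea beyond Huxley's is needed, only the observation that restricting to principal-ideal scaling $\alpha\cdot B$ keeps the support box balanced.
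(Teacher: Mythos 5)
Your proposal is correct and follows essentially the same route as the paper: normalize~$\alpha$ by a unit~$\ee$ so that the support $\{\|n/\alpha\|\leq 1\}$ becomes a balanced box of side~$O(N(\alpha)^{1/d})$ in coordinates, and then apply Huxley's Theorem~2 directly — the dictionary between proper additive characters modulo ideals and well-spaced points, and the norm-form spacing estimate you sketch, are already contained in Huxley's theorem and need not be redone. The only points worth making explicit are that the substitution $n\mapsto\ee^{-1}n$ must come with the remark that $\sigma\mapsto\sigma(\ee^{-1}\,\cdot\,)$ permutes the proper additive characters modulo each~$\fm$, so the left-hand side is genuinely unchanged (this is the one observation the paper's proof states and your write-up leaves implicit), and that your bookkeeping ``$\delta^{-2d}\ll X^2$'' should read~$\delta^{-d}\ll X^2$ for $\delta\gg X^{-2/d}$ — a harmless slip since you ultimately cite Huxley's Theorem~2 anyway.
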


\begin{proof}
  Let~$\ee\in\Ok^\ast$ be such that~$|(\ee \alpha)^\pi| \asymp N(\alpha)^{1/d}$, and denote~$\alpha' = \ee \alpha$. For any~$\fm$, the map~$\sigma \mapsto (a\mapsto \sigma(\ee^{-1}a))$ is a permutation of the proper additive characters~$\mod{\fm}$. Therefore, we have
  $$ \ssum{\fm\text{ ideal} \\ 0<N(\fm) \leq X} \ssum{\sigma\mod{\fm} \\ \text{proper}} \Bigg| \ssum{n\in\Ok \\ \|n/\alpha\| \leq 1} c(n) \sigma(n) \Bigg|^2 = \ssum{\fm\text{ ideal} \\ 0<N(\fm) \leq X} \ssum{\sigma\mod{\fm} \\ \text{proper}} \Bigg| \ssum{n\in\Ok \\ \|n/\alpha'\| \leq 1} c'(n) \sigma(n) \Bigg|^2 $$
  with~$c'(n) = c(\ee^{-1}n)$. The condition~$\|n/\alpha'\|\leq 1$ implies~$\|n\| \ll \|\alpha'\| \ll N(\alpha)^{1/d}$, and so Theorem~2 of~\cite{Huxley1968} can be applied with~$N_j \ll N(\alpha)^{1/d}$, which yields the claimed result.
\end{proof}

\subsection{Numeration}

\subsubsection{Carry propagation}\label{sec:carry-propagation}

Let~$r_{\nu, \mu}(n)$ be the integer formed with the digits of~$n$ of indices~$\{\nu, \dotsc, \mu-1\}$, so that if~$n = \sum_{j\geq 0} n_j q^j$, then~$r_{\nu, \mu}(n) = \sum_{0 \leq j < \mu-\nu} n_{\nu+j} q^j$. We write~$r_{\nu, \infty}(n) = \lim_{\mu\to\infty} r_{\nu, \mu}(n)$. We wish to quantify the fact that propagation of a carry is an exponentially rare event. This has been studied in particular in~\cite{Grabner1999,MTT-Fractal}. In fact the following lemma can be seen as an arithmetic restatement of a weaker version of~\cite[Proposition~4.1]{MTT-Fractal}.
\begin{lemma}\label{lem:carry}
  There exists~$\eta_2 = \eta_2(q, \cD) \in (0, 1]$, such that for all integers~$0\leq \rho \leq \nu \leq \mu$, we have
  \begin{equation}
    \card\{m\in\cN_\mu, \exists n\in\cN_{\nu-\rho}, r_{\nu,\infty}(m+n) \neq r_{\nu,\infty}(m) \} \ll Q^{\mu-\eta_2\rho}.\label{eq:carry-upperbound}
  \end{equation}
\end{lemma}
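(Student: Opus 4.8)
The plan is to show that a carry from position $<\nu-\rho$ reaching position $\nu$ forces a long ``run'' of maximally-constrained digits, which is exponentially rare. First I would reduce to the relevant local statement. Fix $m\in\cN_\mu$ and $n\in\cN_{\nu-\rho}$; since $n$ has length $\leq\nu-\rho$, adding $n$ to $m$ only affects digit position $\nu$ of $m$ (i.e.\ changes $r_{\nu,\infty}(m)$) if the addition produces a carry that propagates through \emph{all} digit positions $\nu-\rho,\nu-\rho+1,\dots,\nu-1$ of $m$, a block of $\rho$ consecutive positions. Concretely, write $m'=r_{\nu-\rho,\nu}(m)\in\cN_\rho$ for the block of digits of $m$ in those positions; then $r_{\nu,\infty}(m+n)\neq r_{\nu,\infty}(m)$ for some $n\in\cN_{\nu-\rho}$ forces $m'$ to lie in the set $\cC_\rho$ of elements $x\in\cN_\rho$ such that $x+y$ produces a carry out of position $\rho-1$ for \emph{some} $y\in\cN_{\rho}$ (or more precisely, for some legitimate carry-in of $0$ or $1$ from below — one absorbs the ``$\leq 1$'' via $\cN_{\rho}$ on a slightly shifted index, exactly as in the rational case). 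The point is that the addition automaton of the FNS $(q,\cD)$ has a bounded state space, and ``carry propagates out of a block of length $\rho$'' corresponds to a path of length $\rho$ in the carry graph that avoids the absorbing ``no-carry'' state; since that graph has spectral radius $<1$ on the non-absorbing part (this is precisely the Gröchenig--Haas characterization of the finiteness property, cited in Section~\ref{sec:setting}), the number of such length-$\rho$ strings of digit-blocks is $\ll (1-\eta_2)^\rho$ times the total, for some $\eta_2=\eta_2(q,\cD)\in(0,1]$. Equivalently one may invoke \cite[Proposition~4.1]{MTT-Fractal} directly, as the remark preceding the lemma suggests.

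With that local count in hand, the global estimate follows by summing over the remaining digits. The elements $m\in\cN_\mu$ with $r_{\nu-\rho,\nu}(m)\in\cC_\rho$ number at most $\card(\cC_\rho)\cdot Q^{\mu-\rho}$, since the digits of $m$ at the $\mu-\rho$ positions outside $\{\nu-\rho,\dots,\nu-1\}$ are unconstrained and each ranges over a set of size $Q=\card(\cD)=N(q)$. (Here I use the uniqueness of base-$q$ expansions, so that the map from digit strings to $\cN_\mu$ is injective, hence these counts are exact.) Using $\card(\cC_\rho)\ll Q^\rho(1-\eta_2)^\rho \ll Q^{\rho-\eta_2\rho}$ — after renaming $\eta_2$ so that $(1-\eta_2)\leq Q^{-\eta_2}$, which is harmless since $Q\geq 2$ — we obtain the claimed bound $\ll Q^{\mu-\eta_2\rho}$. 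The hypotheses $0\leq\rho\leq\nu\leq\mu$ are exactly what is needed for the block $\{\nu-\rho,\dots,\nu-1\}$ to sit inside $\{0,\dots,\mu-1\}$, so the decomposition is valid.

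The main obstacle is making the ``carry propagation $=$ path in a finite graph with subunit spectral radius'' step fully rigorous in the general (non-rational, possibly several Galois conjugates) setting: one must check that the number of digits affected by adding an element of $\cN_{\nu-\rho}$ is genuinely controlled by a \emph{finite-memory} process, i.e.\ that the ``carry'' at each step lives in a fixed finite set independent of $\nu,\mu$. For rational bases the carry is just $0$ or $1$; in a general FNS it is an element of the bounded region where the addition automaton operates, and finiteness of the number of carry states — together with the fact that the ``no further carry'' state is reached with positive rate, which is precisely equivalent to $(q,\cD)$ having the finiteness property — is what the Gröchenig--Haas / Kovács--Pethő theory provides. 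I would state this carefully as the definition of $\eta_2$ (matching the forward reference ``it is formally introduced below in Lemma~\ref{lem:carry}''), quote \cite[Proposition~4.1]{MTT-Fractal} or \cite{GroechenigHaas1994} for the exponential bound, and keep the combinatorial bookkeeping — which positions are free, which are constrained — as the only hands-on computation.
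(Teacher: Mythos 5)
Your plan is correct and follows essentially the same route as the paper's proof: carries live in a fixed finite set of states, the event $r_{\nu,\infty}(m+n)\neq r_{\nu,\infty}(m)$ forces the $\rho$ digits of $m$ just below position $\nu$ to label a length-$\rho$ walk in the carry graph avoiding the absorbing no-carry state, the number of such digit words is $\ll Q^{(1-\eta_2)\rho}$, and the remaining $\mu-\rho$ digits are free, giving $Q^{\mu-\eta_2\rho}$. One small caveat: the paper deliberately does \emph{not} invoke \cite[Proposition~4.1]{MTT-Fractal} directly, because that result assumes primitivity of the graph $\tilde G(S)$; instead it constructs the carry set $\B_{st}$ explicitly and uses the finiteness property to show the zero state is reachable from every carry state, which is precisely the justification your ``spectral radius $<1$ on the non-absorbing part'' step requires (rather than the Gröchenig--Haas matrix, which is a different object).
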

However, in~\cite{MTT-Fractal} and with their notations, the authors work under the assumption that a certain graph~${\tilde G}(S)$ is primitive. Since we are only interested in the upper-bound~\eqref{eq:carry-upperbound}, we do not need this assumption here. What is required is that, from any vertex, there is a path leading to an absorbing state; the possibility of the matrix of the graph~${\tilde G}(S)$ having multiple dominant eigenvalues does not affect us. In~\cite{ScheicherThuswaldner2002}, the primitivity of~${\tilde G}(S)$ is proved, however in the more specific case of canonical number systems. For these reasons we include a self-contained proof of Lemma~\ref{lem:carry}.

\begin{proof}
  We have assumed that~$0\in\cD$, and that every element in~$\Ok$ has a base $q$ expansion. Let~$\B = \cD - \cD$, and define a sequence of sets by
  $$ \B_0 = \{0\}, \qquad \B_{j+1} = \B + q\B_j $$
  for all~$j\geq 0$. Note that~$\B_j = \B + q\B + \dotsb q^{j-1}\B$ for~$j\geq 1$, so that this sequence is increasing. Since~$\cD \subset \B$, we have~$\cN_j \subset \B_j$. Moreover, for all~$j>0$ and all~$n\in\B_j$, there exist~$a\in\cD$ and~$m\in \B_{j-1}$ such that~$n + a \in qm + \cD$.

  Next we let~$\{0\}\subset \B_{st}\subset\Ok$ be the smallest set such that~$\B_{st} + \cD + \cD \subset \cD + q\B_{st}$; the existence of~$\B_{st}$ is ensured by boundedness of~$\{\sum_{j=1}^r (n_{j,1}+n_{j,2}-n_{j,3})q^{-j}, r\geq 1, n_{j,k} \in \cD\}$. The set~$\B_{st}$ is our initial set of carries. We define a Markov chain on the set of states~$\B_{st}$ by setting, for every~$n\in\B_{st}$ and digit~$a\in\cD$, an edge
  $$ n \overset{a}{\longrightarrow} m \qquad \iff \qquad n + a \in qm + \cD, $$
  each digit~$a\in\cD$ being chosen with equal probability. Note that by construction, we do have~$m\in\B_{st}$. The main point is that~$0$ is an absorbing state for this chain. Therefore, a random walk on~$\B_{st}$ of length~$\rho\in\N$, starting at any given vertex, has probability~$O(c^\rho)$ of not ending at~$0$, for some~$c\in (0, 1)$.

  Let~$m = \sum_{j=0}^{\mu-1} m_j q^j$, with~$m_j\in\cD$. Suppose that there is an~$n\in\cN_{\nu-\rho}$ such that~$(m+n)_{[\nu,\infty]} \neq (m)_{[\nu,\infty]}$. Consider the sequences of carries~$(b_j)_{j\geq -1}$ in the addition~$m+n$. More precisely, if we let~$n = \sum_{j\geq 0} n_j q^j$ with~$n_j=0$ if~$j\geq\nu-\rho$, then~$b_{-1}=0$ and for all~$j\geq 0$, $b_j$ is the unique element of~$\Ok$ such that~$b_{j-1} + m_j + n_j \in \cD + qb_{j}$. By construction, we have~$b_j \in \B_{st}$ for all~$j\geq -1$. For all~$j\geq\nu$, the recurrence relation reads, with our above notations,
  $$ b_{j-1} \overset{m_j}{\longrightarrow} b_j. $$
  Our hypothesis on~$m$ and~$n$ implies that~$b_\nu \neq 0$. Therefore, the tuple~$(m_j)_{\nu-\rho< j \leq \nu}$ describes a walk on~$\B_{st}$ of length at least~$\rho$, not ending at~$0$. The number of such tuples is at most~$O(c^{\rho})$, and so the number of possibilities for~$m$ is at most~$O(Q^{\mu - \eta_2 \rho})$ with~$\eta_2 = -(\log c)/\log Q > 0$
\end{proof}

\begin{remark}
  We will call any admissible constant~$\eta_2$ in Lemma~\ref{lem:carry} a \emph{carry constant}. When~$K = \Q(i)$, we may choose
  $$ \eta_2 =
  \begin{cases}
    0.238186\dots, & (q, \cD) = (-1+i, \{0, 1\}), \\
    0.195636\dots, & (q, \cD) = (-2+i, \{0, 1, 2, 3, 4\}), \\
    0.053205\dots, & (q, \cD) = (-2+i, \{0, -2i, 2, 3, 4\}).
  \end{cases} $$
  These values were obtained by approximating the spectral radius of the adjacency matrix associated with the graph on~$\B_{st}$ considered above (with the absorbing state removed).
\end{remark}

\subsubsection{Harmonic analysis of the fundamental tile}\label{sec:harm-analys-fund}

In this section we study some harmonic analytic properties of the indicator function of the fundamental tile defined in~\eqref{eq:def-F}.
For this purpose we will study the closure of this tile in~$\R^d$,
$$ \FR = \bar{\iota^{-1}(\cF)} \subset \R^d $$
In our context the set~$\FR$ plays the rôle of the unit interval~$[0, 1]$ in~\cite{MR}. For example, when~$d=1$, $q<-1$ and~$\cD = \{0, \dotsc, \abs{q}-1\}$, we have explicitly
$$ \FR = \Big[\frac{q}{1-q}, \frac{1}{1-q}\Big]. $$
In general however, the set~$\FR$ is of a more complicated nature, and is a main object of study in the theory of self-similar tilings of~$\R^n$ (see~\cite{LagariasWang1996}).

Here, we have assumed that~$(q, \cD)$ is a FNS; we refer to Proposition~2.1 of~\cite{MTT-Fractal} for general properties of~$\FR$. In particular, $\FR$ is compact, measurable with Lebesgue measure~$\meas(\FR) = 1$; by Theorem~1 of~\cite{Vince1995}, $\FR$ contains an open neighborhood of the origin; and finally
$$ \meas( \FR \cap (\FR + a)) = 0 \qquad (a\in\Z^d\smallsetminus\{0\}). $$


The set~$\FR$ has been studied in a variety of cases:
\begin{enumerate}
\item For CNS and $d=2$, in~\cite{Gilbert1986} for $K = \Q(i)$, and in~\cite{Thuswaldner,Thuswaldner2001} for all quadratic fields.
\item For CNS and arbitrary~$d$, under a generic condition on the minimal polynomial of~$q$, the upper-box dimension of~$\partial\FR$ is obtained in~\cite{ScheicherThuswaldner2002}.
\item For general FNS, Keesling~\cite{Keesling1999} has shown that the Hausdorff dimension $\dim_H(\partial\FR)$ is always strictly less than~$d$, and that it can be arbitrarily close to~$d$. Note that~$\dim_H(\partial\FR)\geq d-1$, and that equality is achieved in the case~$q = -2$, $\cD = \{ \sum_{i\in I} \omega_i, I\subset\{1, \dotsc, d\}\}$, for which~$\FR = [-\tfrac23,\tfrac13]^d$. Upper and lower bounds on~$\dim_H(\partial\FR)$ in terms of the carry constant~$\eta_2$ are obtained in~\cite{MTT-Fractal}; the bounds coincide when~$\rh = \rb = 1$.
\item The topological properties of~$\FR$ can sometimes be counter-intuitive, in particular, it can be disconnected, see~\cite[Figure~2.1]{LagariasWang1996} and~\cite[Th. 2.3]{GroechenigHaas1994}.
\end{enumerate}

In the present work, we will require some information of the decay of the Fourier transform of the characteristic function of~$\FR$. This is ultimately due to the fact that the information we will use of the function~$f$ concerns its correlation with additive characters~\eqref{eq:Fourier-prop} and its behaviour with respect to the digital expansion~\eqref{eq:carry-prop}.

For~$x, \xi \in \R^d$, let
$$ \chi(x) = \1_{x\in\FR}, \qquad \chih(\xi) = \int_{\FR} \e(-\sg x, \xi\sd)\dd x.  $$
For~$\lambda\in\N_{>0}$, define
$$ \psi_\lambda(x) := \sum_{k\in\Z^d} \chi(q^\lambda(x+k)), $$
where~$q$ is viewed as a linear map of~$\R^d$ by~$qx = \iota^{-1}(q\iota(x))$ for all~$x\in\R^d$.
The function~$\psi_\lambda$ is~$\Z^d$-periodic, and its Fourier coefficients are
$$ \psih_\lambda(\xi) = Q^{-\lambda}\chih(\qt^{-\lambda}\xi) \qquad (\xi\in\Z^d), $$
where~${\tilde q}$ is the adjoint of~$q$.

The less regular~$\partial\FR$ is, the more slowly the function~$\chih$ decays: for instance, in the case~$K=\Q(i)$, $q=-1+i$, $\cD = \{0, 1\}$, it is shown by Cohen and Daubechies~\cite[eq. (5.3)]{CohenDaubechies1993} that~$\chih(\xi) \ll \|\xi\|^{-1/2}$ for all~$\xi\in\R^d$, the exponent~$1/2$ being in fact optimal. This and related questions are known in wavelet theory as the regularity problem of self-refinable functions; see in particular~\cite{Cohen1996,Pollicott2008}. Here we are largely able to avoid this question altogether. We essentially require two informations: a truncated Fourier expansion of~$\psi_\lambda$, and an estimate for~$L^2$ norms.

In the context of distribution of $q$-additive functions, this difficulty has been encountered in~\cite{GittenbergerThuswaldner2000} in the case~$K=\Q(i)$, and~\cite{Madritsch2010} for general~$K$ (see~\cite{Steiner2002} for related earlier computations in the context of Parry expansions). To truncate the Fourier series of~$\psi_\lambda$, one wishes to smooth out the function~$\chih$. In the above-mentioned works, this is done by convolving with the characteristic function of a hypercube (Urysohn approximation), however, it is technically convenient to use smooth, compactly supported majorants, so that sums over lattices can be estimated more easily by Poisson summation.

\begin{lemma}\label{lem:Fourier-psi}
  Let~$\lambda, \tau\in\N_{>0}$. There exist complex numbers~$(a_{\lambda, \tau}(\xi))_{\xi\in\Z^d}$ and~$(b_{\lambda, \tau}(\xi))_{\xi\in\Z^d}$ satisfying the following.
  \begin{enumerate}[(i)]
    \item For all fixed~$A\geq 0$, we have
    \begin{equation}
      \abs{a_{\lambda, \tau}(\xi)} \ll_{A} Q^{-\lambda} (1 + \|\qt^{-\lambda-\tau}\xi\|)^{-A}, \qquad \abs{b_{\lambda, \tau}(\xi)} \ll_{A}  Q^{-\lambda-\eta_2\tau} (1 + \|\qt^{-\lambda-\tau}\xi\|)^{-A}.\label{eq:fourier-bound-coeff}
    \end{equation}
    \item The functions~$A_{\lambda, \tau}(x)$ and~$B_{\lambda, \tau}(x)$ defined by
    $$ A_{\lambda, \tau}(x) = \sum_{\xi\in\Z^d} a_{\lambda, \tau}(\xi) \e(\sg \xi, x \sd), \qquad B_{\lambda, \tau}(x) = \sum_{\xi\in\Z^d} b_{\lambda, \tau}(\xi) \e(\sg \xi, x \sd) $$
    satisfy
    $$ \abs{\psi_{\lambda}(x) - A_{\lambda, \tau}(x)} \leq B_{\lambda, \tau}(x). $$
    \item For all~$\kappa\in\{0, \dotsc, \lambda\}$ and~$\xi_0\in\Z^d$, we have
    \begin{equation}
      \sum_{\xi\in\Z^d} \abs{a_{\lambda, \tau}(\xi_0 + \qt^\kappa \xi)}^2 \ll Q^{-\lambda-\kappa},\label{eq:L2-A-AP}
    \end{equation}
    \begin{equation}
      \sum_{\xi\in\Z^d} \abs{b_{\lambda, \tau}(\xi_0 + \qt^\kappa \xi)}^2 \ll Q^{-\lambda-\kappa-\eta_2\tau}.\label{eq:L2-B-AP}
    \end{equation}
  \end{enumerate}
\end{lemma}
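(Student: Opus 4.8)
The strategy is to build the functions $A_{\lambda,\tau}$ and $B_{\lambda,\tau}$ by a two-step smoothing of $\psi_\lambda$: first smooth $\chi=\1_{\FR}$ into a pair of smooth majorant/minorant functions whose Fourier transforms are compactly supported (via Lemma~\ref{lem:majo-fourier-trunc}), then control the error between $\psi_\lambda$ and the corresponding periodized smoothed versions using the carry property (Lemma~\ref{lem:carry}), rephrased geometrically: the set where carries propagate a long way is exactly the ``bad set'' near $\partial\FR$ whose measure is $\ll Q^{-\eta_2\tau}$ after refining to scale $q^{-\lambda-\tau}$.

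\textbf{Step 1: smoothing the tile.} Using Lemma~\ref{lem:majo-fourier-trunc} applied to a bounded set containing $\FR$, fix a Schwartz function $V_0 \geq \1_{\FR}$ with $\widehat{V_0}$ supported in $\|\xi\|_\infty \leq 1$. Rescale: for a dilation parameter we will take $q^{-\tau}$ (acting through $\iota$), so that $V_0(q^\tau \cdot)$ is concentrated at scale $\sim N(q)^{-\rb\tau}$ around $\FR$ and its Fourier transform is supported in $\|\qt^{-\tau}\xi\|_\infty \ll 1$. Likewise, because $\FR$ contains a neighborhood of the origin (cited above from Vince) and more importantly because the ``interior at scale $q^{-\tau}$'' of $\FR$ differs from $\FR$ only on a set of measure $\ll Q^{-\eta_2\tau}$ — this is the content of Lemma~\ref{lem:carry}, since a point $x = \sum_{j\geq 1} b_j q^{-j}$ lies deep inside $\FR$ precisely when no carry reaches position $\leq -\tau$ when one perturbs the low-order digits — one obtains a smooth minorant $V_1 \leq \1_{\FR}$ with $\widehat{V_1}$ again supported in $\|\qt^{-\tau}\xi\|_\infty\ll 1$ and $\int (V_0 - V_1)(q^\tau x)\,\mathrm{d}x \ll Q^{-\eta_2\tau}Q^{-\tau\cdot(\text{normalization})}$, i.e. $\widehat{V_0 - V_1}(0) \ll Q^{-\eta_2\tau}$ after the relevant rescaling.

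\textbf{Step 2: periodize and dilate by $q^\lambda$.} Set $A_{\lambda,\tau}(x) = \sum_{k\in\Z^d} \tfrac12(V_0+V_1)\bigl(q^{\lambda+\tau}(x+k)\bigr)$ smoothed appropriately — more precisely take $A_{\lambda,\tau}$ to be the periodization of the $q^\lambda$-dilate of a fixed smooth function interpolating $V_0,V_1$, and $B_{\lambda,\tau}$ the periodization of the $q^\lambda$-dilate of $V_0 - V_1$. Then $|\psi_\lambda - A_{\lambda,\tau}| \leq B_{\lambda,\tau}$ pointwise is immediate from $V_1 \leq \1_\FR \leq V_0$ and summing over $k$ (note $\psi_\lambda(x) = \sum_k \chi(q^\lambda(x+k))$ is the periodization of $\chi\circ q^\lambda$). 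The Fourier coefficients are computed exactly as for $\psih_\lambda$: by the Poisson-type identity, $a_{\lambda,\tau}(\xi) = Q^{-\lambda} \cdot \widehat{(\cdot)}(\qt^{-\lambda-\tau}\xi) \cdot(\text{factor})$, and the support of the Fourier transform of the smoothing in $\|\qt^{-\tau}(\text{stuff})\|\ll1$ together with its Schwartz decay yields the bound $\abs{a_{\lambda,\tau}(\xi)} \ll_A Q^{-\lambda}(1+\|\qt^{-\lambda-\tau}\xi\|)^{-A}$ of (i); the extra factor $Q^{-\eta_2\tau}$ for $b_{\lambda,\tau}$ comes from $\widehat{V_0-V_1}(0)\ll Q^{-\eta_2\tau}$ (the $L^\infty$ bound on the Fourier coefficients is controlled by the $L^1$ mass, which is this small).

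\textbf{Step 3: the $L^2$ bounds (iii).} For \eqref{eq:L2-A-AP}, note $\sum_{\xi\in\Z^d}\abs{a_{\lambda,\tau}(\xi_0+\qt^\kappa\xi)}^2$ is, by Parseval on the sublattice $\qt^\kappa\Z^d$, essentially $Q^{-\kappa}$ times an average of $|A_{\lambda,\tau}|^2$ over a fundamental domain — but more simply, plug in (i): $\sum_\xi \abs{a_{\lambda,\tau}(\xi_0+\qt^\kappa\xi)}^2 \ll Q^{-2\lambda}\sum_{\xi}(1+\|\qt^{-\lambda-\tau}\xi_0 + \qt^{\kappa-\lambda-\tau}\xi\|)^{-2A}$, and comparing the sum to an integral (the lattice $\qt^{\kappa-\lambda-\tau}\Z^d$ has covolume $Q^{\kappa-\lambda-\tau}$, with the skewing handled as usual via a unit translate so that all eigenvalue-scales are comparable, cf. the proof of Lemma~\ref{lem:large-sieve}) gives $\ll Q^{-2\lambda}\cdot Q^{\lambda+\tau-\kappa} = Q^{-\lambda-\kappa+\tau}$. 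This is weaker than claimed by a factor $Q^\tau$; to remove it one instead argues via Parseval directly: $A_{\lambda,\tau}$ is the periodization of a function of $L^2$-mass $\ll Q^{-\lambda}$ (since $\int |\tfrac12(V_0+V_1)(q^{\lambda+\tau}x)|^2\,\mathrm dx \ll Q^{-\lambda-\tau}$, but one must be careful that the relevant dilation for the $L^2$ statement is $q^{\lambda}$ not $q^{\lambda+\tau}$ — the $\tau$-smoothing happens at a finer scale and contributes a bounded multiplicative factor to the $L^2$ norm, not a $Q^\tau$ gain, because $V_0,V_1$ both approximate the same $\1_\FR$). Applying Parseval on $\qt^\kappa\Z^d$: $\sum_\xi \abs{a_{\lambda,\tau}(\xi_0+\qt^\kappa\xi)}^2 = Q^{-\kappa}\int_{[0,1]^d}|A_{\lambda,\tau}(x)\e(\sg\xi_0,x\sd)|^2\,\mathrm dx \ll Q^{-\kappa}\cdot Q^{-\lambda}$, using $\|A_{\lambda,\tau}\|_{L^2([0,1]^d)}^2 \ll Q^{-\lambda}$. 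For \eqref{eq:L2-B-AP} the same computation with $V_0-V_1$ in place of $\tfrac12(V_0+V_1)$ gives an extra $Q^{-\eta_2\tau}$ from $\|V_0-V_1\|_{L^2}^2 \ll \|V_0-V_1\|_{L^1}\|V_0-V_1\|_{L^\infty} \ll Q^{-\eta_2\tau}$.

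\textbf{Main obstacle.} The delicate point is Step 1: constructing the smooth \emph{minorant} $V_1$ with compactly supported Fourier transform and with $\|V_0 - V_1\|_{L^1} \ll Q^{-\eta_2\tau}$. This requires translating the combinatorial carry estimate of Lemma~\ref{lem:carry} into a statement about the Lebesgue measure of the ``$q^{-\tau}$-boundary layer'' $\{x\in\FR : \dist(x,\partial\FR) \ll N(q)^{-\rb\tau}\}$ (or rather the more intrinsic set of $x$ whose base-$q$ expansion admits a carry propagating past scale $\tau$ under some bounded digit perturbation), and then convolving the minorant indicator of the shrunken tile with a fixed smooth bump of Fourier support $\ll 1$ at scale $q^{-\tau}$ — one must check that this convolution still lies below $\1_\FR$, which needs a slightly larger shrinkage (replace $\tau$ by $\tau - O(1)$, harmless). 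Keeping track of how the anisotropic dilation $q^\tau$ interacts with the isotropic cube support $\|\xi\|_\infty\leq1$ of $\widehat{V_0}$ — i.e. the skewing — is the recurring technical nuisance, handled throughout by the Gelfand bounds \eqref{eq:Gelfand} and unit-translation tricks.
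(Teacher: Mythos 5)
Your overall architecture (smooth the tile at the anisotropic scale $q^{-\tau}$, control the error by the Lebesgue measure of a boundary layer of $\partial\FR$ bounded via Lemma~\ref{lem:carry}, periodize the $q^\lambda$-dilate, then use a Parseval-type argument for part~(iii)) is the same as the paper's, but two of the three load-bearing steps are not actually carried out, and one is set up in a way that cannot work as written. First, the step you yourself label the ``main obstacle'' --- converting the counting statement of Lemma~\ref{lem:carry} (about elements of $\cN_\mu$ and carries in integer addition) into the measure bound $\meas\bigl(\partial\FR + q^{-\tau}B(0,O(1))\bigr) \ll Q^{-\eta_2\tau}$ --- is precisely the heart of the proof, and you only assert it heuristically (``a point lies deep inside $\FR$ precisely when no carry reaches position $\leq -\tau$''). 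The paper has to work for this: one writes each point of the inflated boundary layer, rescaled by $q^\tau$, as $m+y$ with $m\in\Z^d$, $y\in\FR$, uses~\eqref{eq:def-Lambda} and the fact that $q^\tau\FR$ is tiled by $\Z^d$-translates of $\FR$ to produce two bounded perturbations $n_+, n_-$ with $m+n_+\in q^\tau\FR$ and $m+n_-\notin q^\tau\FR$, and only then applies Lemma~\ref{lem:carry} to count such $m$ and integrate. Without this argument the factor $Q^{-\eta_2\tau}$ in the bounds for $b_{\lambda,\tau}$ is unproved. Second, your Step~1 builds the majorant from Lemma~\ref{lem:majo-fourier-trunc}; that lemma only produces a crude band-limited function that is $\geq 1$ on a bounded set, with no $L^1$-tightness whatsoever, so it cannot give $\|V_0-\1_{\FR}\|_{L^1}\ll Q^{-\eta_2\tau}$; and a genuinely band-limited, $L^1$-tight \emph{minorant} of $\1_\FR$ is exactly the Vaaler-type construction the authors state (in the Remark following the lemma) they were unable to obtain in this setting. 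The fix is to notice that compact Fourier support is not needed here: the statement only asks for rapid decay, so you should mollify with a \emph{spatially} compactly supported bump rescaled by $q^\tau$ (as the paper does with $\chi_\tau=\chi\ast\phi_\tau$, majorizing the error by $\1_{V_2}\ast\phi_\tau$ rather than insisting on a two-sided sandwich).

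Third, in part~(iii) your Parseval identity is wrong as stated: $\sum_{\xi}\abs{a_{\lambda,\tau}(\xi_0+\qt^\kappa\xi)}^2$ is not $Q^{-\kappa}\int_{[0,1]^d}\abs{A_{\lambda,\tau}(x)}^2\dd x$; by orthogonality it equals $Q^{-\kappa}$ times a phased sum over $\ell\in\Z^d/q^\kappa\Z^d$ of the autocorrelations $\int A_{\lambda,\tau}(x+q^{-\kappa}\ell)\bar{A_{\lambda,\tau}(x)}\dd x$ (your formula keeps only the $\ell=0$ term). Since there are $Q^\kappa$ values of $\ell$, each trivially bounded by $\|A_{\lambda,\tau}\|_{L^2}^2\ll Q^{-\lambda}$, the factor $Q^{-\kappa}$ is lost unless one shows that only $O(1)$ of the translates contribute; this is done in the paper by a support argument (the periodized function is supported in $\Z^d+q^{-\lambda}V_3$ with $V_3$ bounded, and $\kappa\leq\lambda$ forces $q^\kappa k+\ell$ into a bounded set), combined with $\meas(V_3)\ll Q^{-\eta_2\tau}$ for the $b$-coefficients. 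Your $L^2$-norm inputs ($\|A\|_{L^2}^2\ll Q^{-\lambda}$, $\|V_0-V_1\|_{L^2}^2\ll Q^{-\eta_2\tau}$ after the correct normalization) are the right ones, but without the restriction to $O(1)$ translates the claimed bounds \eqref{eq:L2-A-AP}--\eqref{eq:L2-B-AP} do not follow. (Relatedly, fix the dilation bookkeeping: the periodization must be of the $q^\lambda$-dilate, not the $q^{\lambda+\tau}$-dilate, and the smoothing scale $q^{-\tau}$ lives inside the tile.)
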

\begin{proof}
  Let~$\phi:\R^d\to[0, 1]$ be a smooth function satisfying
  $$ \1_{\|x\|\leq 1/2} \leq \phi(x) \leq \1_{\|x\|\leq 2}, \qquad \int_{\R^d} \phi(x)\dd x = 1, $$
  where~$\|x\|$ is the euclidean norm. Define~$\phi_\tau := Q^\tau \phi(q^\tau x)$. Define
  \begin{equation}
    \chi_\tau := \chi \ast \phi_\tau,\label{eq:def-chitau}
  \end{equation}
  and let~$V_1 = \partial\FR + q^{-\tau}B(0,2)$ and~$V_2 = \partial\FR + q^{-\tau}B(0,2) + q^{-\tau}B(0, 2)$. Note that~$V_1\subset V_2$, and
  $$ \{x\in\R^d:\ \chi(x)\neq \chi_\tau(x)\} \subset V_1. $$
  We focus first on~$V_2$. Let~$\rho$ be an integer such that~$B(0,2) + \FR + (-\FR) \subset q^\rho\FR$. Each number~$x\in q^\tau V_2$ can be decomposed uniquely as~$x=m+y$ where~$m\in\Z^d$ and~$y\in\FR$.
  
By hypothesis, there exist~$u_1, u_2 \in\FR$, such that~$m+y-q^\rho(u_1+u_2)\in\partial(q^\tau\FR)$. Since~$q^\tau\FR$ is tiled by~$\Z^d$-translates of~$\FR$, and~$B(0, 1 + {\rm diam}(\FR)) \subseteq q^\rho\FR$, we deduce the existence of~$z_+, z_- \in \FR$ such that~$m-q^\rho(u_1+u_2+z_+)\in \Z^d\cap q^\tau\FR$ and~$m-q^\rho(u_1+u_2+z_-) \in\Z^d \smallsetminus q^\tau\FR$. Therefore, by~\eqref{eq:def-Lambda}, there exist~$n_+, n_- \in\Z^d \cap q^{\rho+3\Lambda}\FR$ such that~$m+n_+\in q^\tau \FR$ and~$m+n_-\notin q^\tau\FR$. By Lemma~\ref{lem:carry} the number of such~$m$ is at most~$\ll Q^{(1-\eta_2)\tau}$, and therefore
  \begin{equation}
    \meas(V_2) = Q^{-\tau} \sum_{m\in\Z^d} \int_{\FR} \1(m+y \in q^\tau V_2)\dd y \ll Q^{-\eta_2\tau}.\label{eq:fourier-bound-volV2}
  \end{equation}
  We now write
  $$ \abs{\chi(x) - \chi_\tau(x)} \leq \1_{V_1}(x) \leq (\1_{V_2}\ast \phi_\tau)(x). $$
  Define now the smooth,~$\Z^d$-periodic functions
  $$ A_{\lambda, \tau}(x) = \sum_{k\in\Z^d} \chi_\tau(q^\lambda(x+k)), \qquad B_{\lambda, \tau}(x) = \sum_{k\in\Z^d} (\1_{V_2}\ast \phi_\tau)(q^\lambda(x+k)). $$
  We have~$B_{\lambda, \tau}(x) \ll 1$. Let~$a_{\lambda, \tau}(\xi)$ and~$b_{\lambda, \tau}(\xi)$ be the coefficients in the Fourier expansions of~$A_{\lambda, \tau}(x)$ and~$B_{\lambda, \tau}(x)$, respectively. We have
  $$ a_{\lambda, \tau}(\xi) = Q^{-\lambda} \chih( \qt^{-\lambda} \xi) \phih( \qt^{-\lambda-\tau} \xi), $$
  $$ b_{\lambda, \tau}(\xi) = Q^{-\lambda} \phih( \qt^{-\lambda-\tau} \xi) \int_{V_2} \e(\tc{\qt^{-\lambda} \xi,  y})\dd y. $$
  By partial summation, we have the bound
  \begin{equation}
    \abs{\phih(\xi)} \ll_A (1+\|\xi\|)^{-A}\label{eq:bound-der-phi}
  \end{equation}
  for any~$A>0$. By~\eqref{eq:fourier-bound-volV2}, we deduce parts~(i) and~(ii) as claimed.

  For part~(iii), let us consider the case of~$b_{\lambda, \tau}$. By absolute convergence and orthogonality (Lemma~\ref{lem:orthog}), we have
  \begin{equation}
    \sum_{\xi\in\Z^d} \abs{b_{\lambda, \tau}(\xi_0+\qt^\kappa \xi)}^2 = Q^{-\kappa} \sum_{\ell \in \Z^d/B^\kappa \Z^d} \e(-\tc{\qt^{-\kappa}\xi_0, \ell}) \sum_{\xi\in\Z^d} \abs{b_{\lambda, \tau}(\xi)}^2 \e(\tc{\qt^{-\kappa} \xi, \ell}).\label{eq:fourier-parseval-B}
  \end{equation}
  On the other hand, by Poisson summation, we have
  $$ \sum_{\xi\in\Z^d} \abs{b_{\lambda, \tau}(\xi)}^2 \e(\sg \qt^{-\kappa} \xi, \ell \sd) = \int_{\R^d/\Z^d} B_{\lambda,\tau}(x+q^{-\kappa}\ell) B_{\lambda,\tau}(x)\dd x. $$
  Let~$V_3 = V_2 + q^{-\tau}B(0,2)$; it is a bounded set, and by reasoning similarly as in~\eqref{eq:fourier-bound-volV2}, we have~$\meas(V_3) \ll Q^{-\eta_2 \tau}$. By construction, the support of~$B_{\lambda, \tau}$ is included in~$\Z^d + q^{-\lambda}V_3$. Therefore, for any given~$x\in\R^d$, the integrand above vanishes unless there exists~$k\in\Z^d$ such that
  $$ q^\kappa k + \ell \in q^{\kappa-\lambda}(V_3 - V_3), $$
  and the latter is a bounded set since~$\kappa\leq \lambda$, therefore, there are at most a bounded number of~$\ell$ contributing to the sum on the right-hand side of~\eqref{eq:fourier-parseval-B}. We deduce
  $$ \sum_{\xi\in\Z^d} \abs{b_{\lambda, \tau}(\xi_0+\qt^\kappa \xi)}^2 \ll Q^{-\kappa-\lambda-\eta_2\tau} $$
  by~\eqref{eq:fourier-bound-volV2}, and we obtain~\eqref{eq:L2-B-AP}. The bound~\eqref{eq:L2-A-AP} is proved using identical argument in a simpler way; therefore we do reproduce the details.
\end{proof}

\begin{remark}
  It is an important point in the proof that the smooth majorant~$\phi$ is scaled down by powers of~$q$, rather than \emph{e.g.} homotheties (in which case the carry constant~$\eta_2$ would be replaced by the upper-box dimension of~$\partial\FR$, which is less understood in general~\cite{MTT-Fractal,ScheicherThuswaldner2002}).

  There would be much technical simplification to be gained, in our later arguments, by having an analogue of Vaaler's construction of band-limited majorants~\cite{Vaaler1985}, as was used in~\cite{MR}; our attempts were unsuccessful.
\end{remark}

In the sequel, for~$x\in K$ and $\xi\in\Ov$, we will denote
\begin{equation}
  \label{eq:notation-ab-iota}
  \psi_{\lambda}(x) = \psi_{\lambda}(\iota^\vee(x)), \qquad a_{\lambda,\tau}(\xi) = a_{\lambda,\tau}(\iota^\vee(\xi))
\end{equation}
and similarly for~$b_{\lambda,\tau}$.

\subsection{Fourier estimates}

In this section, we prove an analogue of Lemma~10 of~\cite{MR}, concerning restricted~$L^2$ estimates for the discrete Fourier transform of~$f$.

\subsubsection{Fourier property over the middle digits}

The next lemma concerns a variant of the Fourier property~\eqref{eq:Fourier-prop}, in which the sum is effectuated only over the middle digits. This additional flexibility comes at the price of a numerically smaller gain in the exponent.
\begin{lemma}\label{lem:FourierProp-mid}
  Let~$\alpha, \beta, \delta \in\N$ satisfy~$\delta \leq \alpha + \beta$, and let~$\lambda := \alpha + \beta + \delta$. For all~$f$ satifying the Fourier property, we have
  $$ \frac1{Q^\beta}\sum_{u_1 \in \cN_\beta} f_{\kappa+\lambda}(q^\kappa(u_0 + q^\alpha u_1 + q^{\alpha+\beta}u_2)) \e(\tc{u_1 t}) \ll Q^{-\eta' \gamma(\lambda) + \alpha + \delta} $$
  uniformly for~$u_0\in\cN_\alpha$, $u_2\in\cN_\delta$, $t\in K$ and~$\kappa\leq c\lambda$, where~$\eta' = \eta_2(1+\eta_2)^{-1}$.
\end{lemma}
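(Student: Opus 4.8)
The plan is to reduce the sum over the middle digits~$u_1\in\cN_\beta$ to an application of the full Fourier property~\eqref{eq:Fourier-prop}, at the cost of localising the digits above and below. First I would use the Carry property to replace~$f_{\kappa+\lambda}(q^\kappa(u_0+q^\alpha u_1+q^{\alpha+\beta}u_2))$ by a product of a ``top'' and ``bottom'' factor depending separately on~$(u_0)$ and on~$(u_1,u_2)$. More precisely, I would detect the carry between position~$\alpha$ (where the~$u_1$-digits begin) and the lower block of digits coming from~$u_0$: for all but~$O(Q^{\alpha-\eta_1\rho})$ choices of~$u_0$ (here one should take~$\rho$ comparable to~$\gamma(\lambda)$, times a constant depending on~$\eta_1$), the quantity~$f_{\kappa+\lambda}(q^\kappa(u_0+q^\alpha u_1+q^{\alpha+\beta}u_2))\bar f_{\kappa+\lambda}(q^\kappa(q^\alpha u_1+q^{\alpha+\beta}u_2))$ equals its truncated version, which depends on~$u_0$ only through a bounded number of digits; summing trivially over the bad~$u_0$ contributes~$\ll Q^{\alpha-\eta_1\rho}$, which is absorbed into the error term~$Q^{-\eta'\gamma(\lambda)+\alpha+\delta}$ provided~$\eta_1\rho\geq \eta'\gamma(\lambda)$ — and since~$\eta'=\eta_2(1+\eta_2)^{-1}<\eta_2\leq 1$, a bounded multiple of~$\gamma(\lambda)$ suffices. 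On the good event, the factor we actually want to bound is, up to a bounded multiplicative constant in modulus, $f_{\kappa+\lambda}(q^\kappa(q^\alpha u_1+q^{\alpha+\beta}u_2))$, i.e. the summand no longer depends on~$u_0$.

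Next I would relate~$f_{\kappa+\lambda}$ to~$f$ itself. Since~$u_1\in\cN_\beta$ and~$u_2\in\cN_\delta$, the argument~$q^\kappa(q^\alpha u_1+q^{\alpha+\beta}u_2)$ lies in~$q^{\kappa+\alpha}\cN_{\beta+\delta}$, so its digital length is at most~$\kappa+\alpha+\beta+\delta = \kappa+\lambda-\alpha \le \kappa+\lambda$; hence~$f_{\kappa+\lambda}$ agrees with~$f$ on these arguments (the truncation~$f_{\kappa+\lambda}$ only differs from~$f$ on inputs of length exceeding~$\kappa+\lambda$). Therefore, after shifting the summation variable~$u_1\mapsto u_1$ and writing~$w := u_1 + q^\beta u_2 \in \cN_{\beta+\delta}$, I am left to estimate
$$ \frac1{Q^\beta}\sum_{u_1\in\cN_\beta} f\big(q^{\kappa+\alpha}(u_1+q^\beta u_2)\big)\,\e(\tc{u_1 t}). $$
Now I would complete the~$u_1$-sum to a full sum over~$\cN_{\beta+\delta}$ by inserting, via Lemma~\ref{lem:orthog}, an orthogonality relation that isolates the residue of the top~$\delta$ digits: write~$\1(w\equiv u_1 \bmod q^\beta\ \text{with prescribed top digits } u_2)$ as~$Q^{-\delta}\sum_{\xi\in\Ov/q^\delta}\e(\tc{q^{-\delta}\xi (q^{-\beta}(w-u_1)-u_2)})$ — more carefully, one detects the event $w\in u_1+q^\beta u_2 + q^{\beta+\delta}\Ok$ by characters modulo~$q^\delta$ applied to~$q^{-\beta}(w-u_1)$. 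This expresses the sum as~$Q^{-\delta}\sum_{\xi}(\cdots)$ times
$$ \sum_{w\in\cN_{\beta+\delta}} f\big(q^{\kappa+\alpha} w\big)\,\e\big(\tc{w t'}\big) $$
for suitable~$t'=t'(t,\xi)\in K$ and a phase of modulus~$1$. Each inner sum is~$\ll Q^{(\beta+\delta)-\gamma(\beta+\delta)}$ by the Fourier property~\eqref{eq:Fourier-prop}, using~$\kappa+\alpha \le c\lambda + \lambda$ — one must check the hypothesis of~\eqref{eq:Fourier-prop} allows the shift by~$q^{\kappa+\alpha}$, which costs enlarging~$c$ by a bounded factor, harmless since we only need ``$\kappa\le c\lambda$'' qualitatively; and~$\gamma$ is non-decreasing so~$\gamma(\beta+\delta)\ge\gamma(\lambda-\alpha)$. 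Summing over~$Q^\delta$ values of~$\xi$ and dividing by~$Q^\beta$ yields~$\ll Q^{-\gamma(\beta+\delta)+\delta}$.

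Finally I would assemble the exponents. Combining the error~$Q^{\alpha-\eta_1\rho}$ from the carry step with the main estimate~$Q^{-\gamma(\beta+\delta)+\delta}$, and choosing~$\rho$ so that~$\eta_1\rho$ is a suitable fraction of~$\gamma(\lambda)$, one obtains a bound of the shape~$Q^{-\eta'\gamma(\lambda)+\alpha+\delta}$; the precise constant~$\eta'=\eta_2(1+\eta_2)^{-1}$ should emerge from optimising the split between how much of~$\gamma(\lambda)$ is ``spent'' on the carry event (governed by~$\eta_2$, via~$\eta_1$ — or rather one should track that the relevant carry estimate here is the one with constant~$\eta_2$ from Lemma~\ref{lem:carry} controlling digit blocks, not~$\eta_1$) versus how much survives in the Fourier estimate, which forces a factor~$(1+\eta_2)^{-1}$ when balancing~$\gamma(\beta+\delta)$ against~$\gamma(\lambda)$ after the loss of~$\alpha$ digits. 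The main obstacle I anticipate is precisely this bookkeeping: making the decomposition of~$f_{\kappa+\lambda}$ into top/bottom factors genuinely clean (the Carry property as stated is about a three-block decomposition~$u_1+u_2+vq^\kappa$, so one has to match the notation of~\eqref{eq:carry-prop} to the present~$(u_0,u_1,u_2)$ layout and possibly iterate it), and verifying that every shifted argument stays within the range where~\eqref{eq:Fourier-prop} and the length-truncation identities are valid, so that no term of size~$Q^\beta$ leaks through uncancelled.
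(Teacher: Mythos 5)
Your proposal has two genuine gaps at the decisive steps. First, the way you invoke the Carry property is not legitimate here: $u_0$ is a \emph{fixed} parameter of the lemma (the bound must be uniform in $u_0\in\cN_\alpha$ and $u_2\in\cN_\delta$), so there is no sum over $u_0$ into which an exceptional set of size $O(Q^{\alpha-\eta_1\rho})$ could be absorbed; if the given $u_0$ happens to be a ``bad'' one you are left with nothing, and the trivial bound $1$ does not imply $Q^{-\eta'\gamma(\lambda)+\alpha+\delta}$ (take $\alpha=\delta=0$). Moreover, even with the correct orientation of \eqref{eq:carry-prop} (exceptional set in the high variable, which does involve the summed $u_1$), the decoupled product is not ``a bounded multiplicative constant'': the truncated factors $f_{\kappa+\alpha+\rho}(\cdot)$ still depend jointly on $u_0$ and on the lowest $\rho$ digits of $u_1$, so they cannot be pulled out of the $u_1$-sum without an extra device (in the analogous but harder situation of Proposition~\ref{prop:Fourier-middle} this is exactly what the $c_h(w)$, $d_t(\ell)$ bookkeeping is for). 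The paper's proof of this lemma does not decouple $u_0$ by carries at all: the low-digit constraint $u\equiv u_0 \bmod q^\alpha$ is detected by additive characters mod $q^\alpha$ (this is the source of the $Q^\alpha$ loss in the statement), and the Fourier property is applied to the \emph{full} block $u\in\cN_\lambda$ with shift $q^\kappa$, so that the hypothesis $\kappa\leq c\lambda$ is used verbatim and the full saving $\gamma(\lambda)$ is obtained.

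Second, your completion step fails: prescribing the \emph{top} $\delta$ digits of $w\in\cN_{\beta+\delta}$ is not a congruence condition, so it cannot be detected by Lemma~\ref{lem:orthog} with characters modulo $q^\delta$ (orthogonality only sees residues, i.e.\ low digits). In the rational case this condition is an interval condition handled by geometric-series bounds; here it is membership in a translate of the fundamental tile, and this is precisely where the paper brings in the periodized tile function $\psi_\delta$, its smoothed Fourier expansion $A_{\delta,\tau}$ from Lemma~\ref{lem:Fourier-psi}, and Lemma~\ref{lem:carry} to control the truncation error $\psi_\delta-A_{\delta,\tau}$ near $\partial\cF$. Balancing the main term $Q^{\delta-\gamma(\lambda)+\tau}$ against the error $Q^{\delta-\eta_2\tau}$ (with $\tau\leq\alpha+\beta$, which is where $\delta\leq\alpha+\beta$ and $\gamma(\lambda)\leq\lambda/2$ enter) is what produces $\eta'=\eta_2(1+\eta_2)^{-1}$; your sketch has no mechanism generating this constant. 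Finally, two of your side remarks are also problematic: applying \eqref{eq:Fourier-prop} to the shorter block only yields $\gamma(\beta+\delta)$, which for a general non-decreasing $\gamma$ need not dominate $\eta'\gamma(\lambda)-\alpha$, and the constant $c$ in the Fourier property is fixed by hypothesis, so you cannot ``enlarge $c$'' to accommodate the shift $q^{\kappa+\alpha}$ against the block length $\beta+\delta$.
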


\begin{proof}
  We recall the notation~\eqref{eq:def-elambdat}. By orthogonality, our sum is
  \begin{align*}
    {}& \frac1{Q^\beta}\sum_{u_\in \cN_\lambda} f(q^\kappa u) \e_\alpha(ut) \1_{u-u_0\in q^\alpha\Ok} \1_{\frac{u}{q^{\alpha+\beta}}\in u_2 + \cF + q^\delta \Ok} \e_\kappa(-u_0t)\e_{-\beta}(-u_2 t) \\
    = {}& \e_\kappa(-u_0t)\e_{-\beta}(-u_2 t)\sum_{\ell\in\Ok/q^\alpha} \e_\alpha(-u_0\ell) S\Big(\ell, \frac{t+\ell}{q^\alpha}\Big),
  \end{align*}
  where
  $$ S(\ell, t) = \frac1{Q^{\alpha+\beta}} \sum_{u \in \cN_\lambda} f(q^\kappa u) \e(\tc{ut}) \psi_\delta\Big(\frac{u}{q^\lambda} - \frac{u_2}{q^\delta}\Big). $$
  Let~$\tau\in\N$, $\tau\leq \alpha+\beta$, be a parameter. At this point, we wish to apply Lemma~\ref{lem:Fourier-psi}, to replace~$\psi_{\delta}$ by its smoothed version~$A_{\delta, \tau}$. The ensuing main term is
  $$ S_\tau(\ell, t) = \frac1{Q^{\alpha+\beta}} \sum_{u \in \cN_\lambda} f(q^\kappa u) \e(\tc{ut}) A_{\delta,\tau}\Big(\frac{u}{q^\lambda} - \frac{u_2}{q^\delta}\Big), $$
  where we recall the notation~\eqref{eq:notation-ab-iota}. We Fourier expand, use the Fourier property and the bound~\eqref{eq:fourier-bound-coeff} (along with Poisson summation~\eqref{eq:poisson-K}), getting
  \begin{align*}
    S_\tau(\ell, t)
    = {}& \sum_{\xi\in\Ov} a_{\delta,\tau}(\xi) \e_\delta(-\xi u_2) \frac1{Q^{\alpha+\beta}} \sum_{u\in\cN_\lambda} f(q^\kappa u) \e_\lambda(u(\xi+tq^\lambda)) \\
    \ll {}& Q^{\delta-\gamma(\lambda)} \sum_{\xi\in\Ov} |a_{\delta,\tau}(\xi)| \\
    \ll {}& Q^{\delta-\gamma(\lambda) + \tau}.
  \end{align*}
  We justify the Fourier truncation (replacement of~$\psi_\sigma$ by~$A_{\sigma,\tau}$) in a similar way to the proof of Lemma~\ref{lem:Fourier-psi} (see~\eqref{eq:fourier-bound-volV2}). Assuming~$\delta$ is large enough in terms of~$q$, by Lemma~\ref{lem:Fourier-psi}, we know that
  \begin{equation}
    \psi_\delta\Big(\frac{u}{q^\lambda} - \frac{u_2}{q^\delta}\Big) - A_{\delta,\tau}\Big(\frac{u}{q^\lambda} - \frac{u_2}{q^\delta}\Big)
    \ll \1\Big(\frac{u}{q^{\lambda-\delta}} - u_2 \in \partial\cF + B(0, 2/H) + q^\delta \Ok\Big).\label{eq:lemme10-bound-psi-A}
  \end{equation}
  Let~$\tau\in\N$ be the largest integer such that~$B(0, 2/H) \subset q^{-\tau} \cF$; note that~$\tau \geq O(1) + \frac{\log H}{2\rh \log Q}$.
  By a reasoning similar to \eqref{eq:fourier-bound-volV2}, we may find an element~$n'\in\cN_{\alpha+\beta-\tau+4\Lambda}$ such that~$r_{\alpha+\beta, \infty}(u - q^{\alpha+\beta}u_2 - q^\lambda k + n') \neq r_{\alpha+\beta, \infty}(u - q^{\alpha+\beta}u_2 - q^\lambda k)$.
  Denoting~$u' = u - q^{\alpha+\beta}u_2 - q^\lambda k$, we deduce
  \begin{align*}
    |S(\ell,t) - S_\tau(\ell,t)| \ll {}& \frac1{Q^{\alpha+\beta}} \sum_{u\in\cN_\lambda}\1\Big(\frac{u}{q^{\lambda-\delta}} - u_2 \in \partial\cF + B(0, 2/H) + q^\delta \Ok\Big) \\
    \ll {}& \frac1{Q^{\alpha+\beta}} \sum_{k\in\cN_{6\Lambda}} \card\{u'\in\cN_{\lambda+3\Lambda}, \exists n\in\cN_{\alpha+\beta-\tau+4\Lambda}, \\
    {}& \hspace{10em} r_{\alpha+\beta,\infty}(u') \neq r_{\alpha+\beta,\infty}(u'+n)\} \\
    \ll {}& Q^{\delta-\eta_2\tau}
  \end{align*}
  by Lemma~\ref{lem:carry}. Using~\eqref{eq:bestbound-gamma}, we may optimize~$\tau$ under the condition~$\tau\leq \alpha+\beta$ and find
  $$ S(\ell, t) \ll Q^{-\eta' \gamma(\lambda) + \delta} $$
  with~$\eta' = \eta_2(1+\eta_2)^{-1}$, as claimed.
\end{proof}

\subsection{Sums over lattices}

In this section, we estimate sums over lattices that will appear repeatedly later in our arguments. As we already mentioned an additional difficulty when~$d>1$ is the possibility of the multiplication by~$q$ skewing the lattice~$\Ok$ (see~\cite[pp. 203--204]{Madritsch2010}). This issue does not occur in~$\O$ thanks to the additional structure of the unit group~$\Ok^\ast$.

\begin{lemma}\label{lem:lattice-count}
  Let~$\fa$ be a fractional ideal, and~$R\geq 0$. Then
  $$ \card\{h\in \fa\smallsetminus\{0\}, \|h\| \leq R\} \ll N(\fa)^{-1} R^d. $$
\end{lemma}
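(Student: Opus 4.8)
The plan is to reduce to a counting problem for the standard lattice~$\Z^d$ by using the identification~\eqref{eq:Ok-lattice-ident}, and then to handle the skewing by first moving~$\fa$ to a ``balanced'' shape using a unit. Concretely, since~$\fa$ is a fractional ideal, it is of the form~$\fa = \gamma \fb$ for some~$\gamma\in K^\ast$ and some integral ideal~$\fb$, and since the number of ideal classes is finite we may further arrange (at the cost of an implied constant depending only on~$K$) that~$\fb$ belongs to a fixed finite set of representatives; absorbing the representative into~$\gamma$ and the implied constant, it suffices to treat~$\fa = \gamma\Ok$ for~$\gamma\in K^\ast$. Here~$N(\fa) = |N(\gamma)|$ up to a bounded factor.

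First I would pick, using Dirichlet's unit theorem exactly as in the proof of Lemma~\ref{lem:count-units} and Lemma~\ref{lem:large-sieve} (the shifting argument of~\cite[p.~55, eq.~(1.4)]{Murty2007}), a unit~$\ee\in\Ok^\ast$ such that~$|(\ee\gamma)^\pi| \asymp |N(\gamma)|^{1/d} = N(\fa)^{1/d}$ for every~$\pi\in G_K$; write~$\gamma' = \ee\gamma$, so that~$\gamma'\Ok = \gamma\Ok = \fa$ as sets. Then~$h\in\fa$ iff~$h = \gamma' n$ for a unique~$n\in\Ok$, and by Lemma~\ref{lem:norms}(1) together with the fact that all conjugates of~$\gamma'$ have modulus~$\asymp N(\fa)^{1/d}$, the condition~$\|h\|\leq R$ forces~$\|n\| \ll R\,N(\fa)^{-1/d}$ (and conversely~$\|n\|\ll R N(\fa)^{-1/d}$ gives~$\|h\|\ll R$). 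So the count is bounded by the number of~$n\in\Ok = \iota(\Z^d)$ with~$\|\iota^{-1}(n)\| \ll R N(\fa)^{-1/d}$.

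It remains to bound~$\card\{ m\in\Z^d : \|m\| \ll R' \}$ by~$O(1 + (R')^d)$ with~$R' = R N(\fa)^{-1/d}$, which is the classical lattice-point estimate in a ball of radius~$R'$ in~$\R^d$ (all norms on~$\R^d$ being equivalent, with constants depending only on~$d$ and the fixed~$\|\cdot\|$); this gives~$\ll (R')^d = R^d N(\fa)^{-1}$ once~$R' \gtrsim 1$. The only subtlety is the regime~$R' \lesssim 1$: then the ball contains only~$h = 0$ among lattice points close enough to the origin, but since we have excluded~$h = 0$, the left-hand side may still be nonzero only if~$R'$ exceeds a fixed positive constant (the shortest nonzero vector of the shape-normalised lattice~$\iota^{-1}(\gamma'\Ok)$, which is bounded below uniformly because~$N(\gamma' n) \geq 1$ forces~$\|n\|\gg N(\fa)^{1/d}$ hence~$\|h\|\gg 1$ for~$n\neq 0$ — exactly the lower-bound argument from Lemma~\ref{lem:count-units}); hence when~$R' \lesssim 1$ the count is~$0$, and the bound~$R^d N(\fa)^{-1}$ holds vacuously. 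I expect the only mild obstacle to be bookkeeping the dependence of implied constants on the class-group representatives and on~$K$, $\|\cdot\|$; the geometry itself is the elementary fact that a fundamental-domain-packing argument gives~$\ll (R')^d$ lattice points in a ball of radius~$R'$ once~$R'$ is bounded below, and~$0$ otherwise.
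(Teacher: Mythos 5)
Your argument is correct and is essentially the paper's own proof: replace $\fa$ by a principal ideal $\gamma'\Ok \supset \fa$ of comparable norm via a class-group representative, balance $\gamma'$ by a unit so that $|(\gamma')^\pi| \asymp N(\fa)^{1/d}$ for all $\pi$, transfer the condition $\|h\|\leq R$ to $\|n\| \ll R\,N(\fa)^{-1/d}$ using Lemma~\ref{lem:norms}, and count points of $\Ok$ in a ball of that radius. One small repair to your parenthetical: for a fractional ideal one may have $|N(\gamma' n)|<1$, so the correct one-line reason that the count vanishes when $R\,N(\fa)^{-1/d}$ is below a constant is simply that $n\in\Ok\smallsetminus\{0\}$ satisfies $|N(n)|\geq 1$, hence $\|n\| \gg 1$ --- a point the paper's own proof leaves implicit.
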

\begin{proof}
  As in the proof of Lemma~\ref{lem:large-sieve}, let~$\fc\subset\Ok$ be an ideal in the same class as~$\fa$, with~$1\leq N(\fc) \ll 1$. Then~$\fa \subset \fc^{-1}\fa = (u)$ for some~$u\in K$ with~$N(u)\asymp N(\fa)$, and by multiplying by units we may impose~$\abs{u^\pi} \asymp N(u)^{1/d}$. Then
  \begin{align*}
    \card\{h\in \fa, h\neq 0, \|h\| \leq R\} \leq {}& \card\{h\in \Ok, h\neq 0, \|uh\| \leq R\} \\ \leq {}& \card\{h\in \Ok, h\neq 0, \|h\| \leq CRN(u)^{-1/d}\}
  \end{align*}
  for~$C \ll 1$, since~$\|uh\|\leq R$ implies~$\|h\| \ll \|u^{-1}\| \ll N(u)^{-1/d}$. The last cardinality is simple to evaluate, since the basis~$(\omega_j)$ of~$\Ok$ satisfies~$\|\omega_j\| \ll 1$.
\end{proof}

\begin{lemma}\label{lem:solution-gcd}
  Let~$\ft$ be an integral ideal,~$\alpha, \beta\in \ft^{-1}$ and $q\in\Ok$. Then
  $$ \card\{ n\in\Ok/q, \alpha n + \beta \in q\ft^{-1}\} \leq N( \alpha \ft + (q)). $$
\end{lemma}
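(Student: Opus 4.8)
The plan is to count the solutions $n \in \Ok/q$ of the congruence $\alpha n + \beta \in q\ft^{-1}$ by reducing it to a linear congruence in a residue ring and bounding the number of solutions by the index of the relevant subgroup. First I would observe that the set of solutions, if nonempty, is a coset of the set $S_0 := \{n \in \Ok/q : \alpha n \in q\ft^{-1}\}$; hence it suffices to bound $\card(S_0)$, and we may assume without loss of generality that $\beta = 0$ (the bound being trivially true when the solution set is empty). So the task is to bound the number of $n \in \Ok/q$ with $\alpha n \in q\ft^{-1}$.

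Next I would multiply through by $\ft$ to clear denominators: the condition $\alpha n \in q\ft^{-1}$ is equivalent to $\alpha \ft\, n \subset q \Ok$, i.e. $n \in (q\Ok : \alpha\ft\, n)$ in a suitable sense. More precisely, the map $n \mapsto \alpha n$ sends $\Ok$ into $\ft^{-1}$, and we want its image to land in $q\ft^{-1}$. Multiplying by $\ft$, consider the ideal $\fb := \alpha\ft + (q)$ (an integral ideal, since $\alpha \in \ft^{-1}$ gives $\alpha\ft \subset \Ok$). Then I claim $\alpha n \in q\ft^{-1} \iff \alpha\ft\, n \subset q\Ok \iff \fb\, n \subset q\Ok$: indeed $\fb n = \alpha\ft n + (q)n$, and $(q)n \subset q\Ok$ always, so $\fb n \subset q\Ok$ is equivalent to $\alpha\ft n \subset q\Ok$. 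Now the condition $\fb n \subset q\Ok$ on the class of $n$ in $\Ok/q$ describes exactly the kernel of the natural map $\Ok/q \to \Hom(\fb/q\fb, \,\cdot\,)$ — or, more concretely, it cuts out a subgroup of $\Ok/q\Ok$ whose index I would identify with $N(\fb) = N(\alpha\ft + (q))$.

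The cleanest way to see the index is via the exact structure of $\Ok/q\Ok$: localizing at each prime $\fp \mid q$, write $v = v_\fp(q)$ and $e = v_\fp(\fb) \le v$ (here $e \le v$ because $\fb \mid (q)$? — not in general; but $e = \min(v_\fp(\alpha\ft), v)$ so indeed $e \le v$). The local condition $v_\fp(\fb n) \ge v$ becomes $v_\fp(n) \ge v - e$, which is satisfied by a subgroup of $\Ok_\fp/q\Ok_\fp \cong \Ok_\fp/\fp^v$ of index $N(\fp)^e$. Taking the product over $\fp \mid q$ gives that $S_0$ has cardinality $\prod_{\fp} N(\fp)^{v - e}$... wait — I must be careful about which way the index goes. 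The number of $n \bmod \fp^v$ with $v_\fp(n) \ge v-e$ is $N(\fp)^{v-(v-e)} = N(\fp)^e = N(\fp)^{v_\fp(\fb)}$, so $\card(S_0) = \prod_{\fp\mid q} N(\fp)^{v_\fp(\fb)} = N(\fb) = N(\alpha\ft + (q))$, giving the claimed bound with equality in this $\beta = 0$ case.

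The main obstacle is bookkeeping with the fractional ideals and making sure $\fb = \alpha\ft + (q)$ is genuinely integral and that the equivalence $\alpha n \in q\ft^{-1} \iff \fb n \subset q\Ok$ is airtight; once that is done, the count of solutions of a "divisibility" condition modulo $q$ is a routine CRT / local computation. Since the statement only asks for an inequality $\le N(\alpha\ft+(q))$, I do not even need the sharp local count — it suffices to exhibit an injection from the solution set (as a coset of $S_0$) into $\Ok/\fb$, e.g. via $n \mapsto n \bmod \fb$ composed with the observation that $\fb n \subset q\Ok$ and $\fb n' \subset q\Ok$ with $n \equiv n' \pmod{q}$ forces nothing new, so really the cleanest route is: the solution set injects into the set of $n \bmod \fb$ because two solutions congruent mod the full modulus differ by an element killed appropriately — but this needs $q \in \fb$, i.e. $(q) \subset \fb$, which holds by definition of $\fb$. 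Thus I would finish by checking $(q)\subset\fb$, deducing the solution set is a union of cosets of $\{n : \fb n \subset q\Ok\}/q\Ok$ inside $\Ok/q\Ok$, and bounding its size by $[\Ok : \fb] = N(\fb)$.
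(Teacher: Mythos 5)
Your proposal is correct and follows essentially the same route as the paper: homogenize to reduce to counting $\{n : \alpha n \in q\ft^{-1}\}$, identify this condition with divisibility by the integral ideal $\alpha\ft + (q)$, and count the resulting subgroup of $\Ok/q$, obtaining exactly $N(\alpha\ft+(q))$. The paper phrases the count globally as $|I/(q)|$ with $I=(q)\fd^{-1}$ rather than via your prime-by-prime valuation computation, but this is the same calculation; your closing hesitations (e.g.\ about whether $\fb\mid(q)$, which does hold since $(q)\subset\fb$) are harmless.
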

\begin{proof}
  By homogeinizing, we have
  $$ \card\{ n\in\Ok/q, \alpha n + \beta \in q\ft^{-1} \} \leq \card\{n\in\Ok/q, n\alpha \in q\ft^{-1}\}. $$
  Let~$\alpha_0 = \alpha \ft$ and~$\fd = \alpha_0 + (q)$. The condition~$n\alpha \in q\ft^{-1}$ becomes~$(q) \mid n\alpha_0$ and so~$n\in I$, where~$I = (q)\fd^{-1}$ is an integral ideal. But~$|I / (q)| = |\Ok / \fd| = N(\fd)$ as claimed.
\end{proof}

\begin{lemma}\label{lem:smooth-doublesum}
  Let~$s_1, s_2, q \in \Ok$ with~$q\mid s_j$. Let~$\ft$ be an integral ideal, and~$\alpha, \beta\in\ft^{-1}$. Let~$V_0 : \R^d \to \C$ be in the Schwartz class, and define two functions on~$K$ by~$V = V_0 \circ \iota^{-1}$ and~${\hat V} = {\hat V_0} \circ (\iota^\vee)^{-1}$. Then
  $$ \sum_{m\in\Ok} V\Big(\frac m{s_1}\Big) \Big| \sum_{n\in\Ok} V\Big(\frac n{s_2}\Big) \e\Big(\tc{\frac{n(\alpha m + \beta)}{q}}\Big)\Big| \ll_V \frac{N(s_1) N(s_2)}{N(q)} N(\alpha \ft + (q)) N(\ft), $$
  where the implied constant depends at most on~$K$, $q$, $\cN$ and~$V$.
\end{lemma}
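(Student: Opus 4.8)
The plan is to apply Poisson summation to the inner sum over $n$, to reorganise the resulting double sum over $(m,\xi)$ according to the value $w:=\alpha m+\beta+q\xi$, and to estimate the two pieces by lattice-point counting. The structural fact used throughout is that every nonzero fractional ideal $\fa$, viewed as a lattice in $\R^d$ via a $\Z$-basis, is \emph{well-rounded} --- all its successive minima are $\asymp N(\fa)^{1/d}$ --- since $x\in\fa\setminus\{0\}$ forces $|N(x)|\geq N(\fa)$ and hence $\|x\|\gg N(\fa)^{1/d}$, while Minkowski's theorem gives the matching upper bound. This is the absence of the ``skewing phenomenon'' for ideals.

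Since $V_0$ is Schwartz we have $|V_0(x)|,|\hat V_0(x)|\ll_A(1+\|x\|)^{-A}$ for every $A$. Write $s_2=q\tilde s_2$ with $\tilde s_2\in\Ok$, and apply \eqref{eq:poisson-K} to the inner sum with $\eta=s_2$ and $t=(\alpha m+\beta)/q$; as $s_2(\xi+t)=\tilde s_2(q\xi+\alpha m+\beta)$, the left-hand side is
\[ \ll N(s_2)\sum_{m\in\Ok}\sum_{\xi\in\Ov}\Big(1+\Big\|\tfrac{m}{s_1}\Big\|\Big)^{-A}\big(1+\|\tilde s_2(q\xi+\alpha m+\beta)\|\big)^{-A}. \]
For fixed $m$ the map $\xi\mapsto w=\alpha m+\beta+q\xi$ is a bijection from $\Ov$ onto the coset $\alpha m+\beta+q\Ov$; interchanging summations, the double sum equals
\[ \sum_{w\in\beta+\mathfrak c'}\big(1+\|\tilde s_2 w\|\big)^{-A}\sum_{\substack{m\in\Ok\\ \alpha m\equiv w-\beta\mod{q\Ov}}}\Big(1+\Big\|\tfrac{m}{s_1}\Big\|\Big)^{-A},\qquad \mathfrak c':=(\alpha)+q\Ov, \]
where we may assume $\alpha\neq0$ (the case $\alpha=0$, in which the inner $n$-sum does not depend on $m$, is easier).

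For the $m$-sum: the solution set $\{m\in\Ok:\alpha m\equiv w-\beta\mod{q\Ov}\}$ is non-empty (as $w\in\beta+\mathfrak c'$) and is a coset of the integral ideal $\Ok\cap(q)(\alpha)^{-1}\fD_K^{-1}$, of norm $N(q)/N((q)+(\alpha)\fD_K)$; a valuation-by-valuation comparison, in which the fixed different $\fD_K$ contributes only a constant depending on $K$, gives $N((q)+(\alpha)\fD_K)\ll_K N(\alpha\ft+(q))$ (cf.\ the proof of Lemma~\ref{lem:solution-gcd}), so this ideal has norm $\gg_K N(q)/N(\alpha\ft+(q))$. To count its coset inside $\{m:\|m/s_1\|\leq R\}$, a possibly badly skewed ellipsoid, multiply $m$ by a unit $\ee\in\Ok^\ast$ with $|(\ee s_1)^\pi|\asymp N(s_1)^{1/d}$ for all $\pi\in G_K$: since an ideal is set-wise invariant under multiplication by units and $\ee s_1$ has balanced conjugates, (a translated version of) Lemma~\ref{lem:lattice-count} bounds the count by $\ll 1+N(s_1)R^dN(\alpha\ft+(q))/N(q)$, so summing the decay in $R$ the $m$-sum is $\ll 1+N(s_1)N(\alpha\ft+(q))/N(q)$. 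For the $w$-sum: $\sum_{w\in\beta+\mathfrak c'}(1+\|\tilde s_2 w\|)^{-A}=\sum_{u\in\tilde s_2\beta+\tilde s_2\mathfrak c'}(1+\|u\|)^{-A}$, and since $\tilde s_2\mathfrak c'$ is a fractional ideal, hence well-rounded with minima $\asymp(N(\tilde s_2)N(\mathfrak c'))^{1/d}$, this is $\ll\max\{1,(N(\tilde s_2)N(\mathfrak c'))^{-1}\}$; a valuation count gives $v_\fp(\mathfrak c')\geq-v_\fp(\ft)-v_\fp(\fD_K)$, hence $N(\mathfrak c')\gg_K1/N(\ft)$, and since $N(\tilde s_2)\geq1$ the $w$-sum is $\ll N(\ft)$. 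Combining the three bounds,
\[ \text{LHS}\ll N(s_2)\,N(\ft)\Big(1+\frac{N(s_1)N(\alpha\ft+(q))}{N(q)}\Big)\ll\frac{N(s_1)N(s_2)}{N(q)}N(\alpha\ft+(q))N(\ft), \]
the last step using $q\mid s_1$ (so $N(s_1)/N(q)\geq1$) and $N(\alpha\ft+(q))\geq1$ to absorb the ``$1$''.

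The main obstacle is the lattice-point count in the $m$-sum: the constraint $\|m/s_1\|\leq R$ carves out an ellipsoid that can be arbitrarily skewed, so for a general lattice the naive volume bound is false. The crucial point is that the ideal $\Ok\cap(q)(\alpha)^{-1}\fD_K^{-1}$ is fixed set-wise by multiplication by units, so one may first rebalance the conjugates of $s_1$, after which this ideal is well-rounded and Lemma~\ref{lem:lattice-count} applies cleanly; the remaining difficulty is then purely bookkeeping with the fractional ideals $\Ov=\fD_K^{-1}$, $(\alpha)$, $(q)$ and $\ft$, arranged so that the different $\fD_K$ affects only the implied constants while $N(\alpha\ft+(q))$ and $N(\ft)$ appear with the correct exponents.
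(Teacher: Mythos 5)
Your proof is correct, and it reaches the stated bound by a route that differs in execution from the paper's, although the skeleton (Poisson summation plus ideal-lattice counting, with $N(\alpha\ft+(q))$ arising from the congruence $\alpha m\equiv\cdot\pmod{q\Ov}$) is parallel. The paper, after the same first Poisson summation in $n$, enlarges the frequency lattice from $\Ov$ to $\ft^{-1}\Ov$, splits $m$ into residue classes modulo $q$, applies Poisson a second time to each class (producing the factor $N(s_1)/N(q)$ with no additive $1$), counts the admissible classes $m_0$ via Lemma~\ref{lem:solution-gcd} applied with $\ft\gets\ft\fD_K$ (producing $N(\alpha\ft+(q))$ up to the harmless factor $N(\fD_K)$), and finishes with a lattice sum over $\ft^{-1}\Ov$ giving $N(\ft)$. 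You instead keep a single Poisson summation, reorganize the $(m,\xi)$-sum by $w=\alpha m+\beta+q\xi$, and perform two weighted coset counts: the $m$-count over a coset of $\Ok\cap(q)(\alpha)^{-1}\fD_K^{-1}$, whose norm you compute by valuations and compare with $N(q)/N(\alpha\ft+(q))$ losing only $N(\fD_K)$, and the $w$-count over $\beta+((\alpha)+q\Ov)$, bounded by $N(\ft)$ since $N((\alpha)+q\Ov)\gg_K N(\ft)^{-1}$; the resulting additive $1$'s are absorbed at the end using $q\mid s_1$ and the integrality of $\alpha\ft+(q)$, both legitimate. Your unit-rebalancing step, reducing the possibly skewed region $\|m/s_1\|\leq R$ to a ball before invoking a translated form of Lemma~\ref{lem:lattice-count}, is exactly the mechanism the paper uses inside Lemmas~\ref{lem:lattice-count} and~\ref{lem:large-sieve}, and your constants depend only on $K$, the norm and $V$, hence remain uniform in the auxiliary modulus $q$ of the lemma, as needed for the later applications with $q\gets q^{\mu}$, $q^{\mu_0}$, etc. What the paper's double-Poisson route buys is slightly cleaner bookkeeping (no $+1$ terms, and the gcd count is quoted from Lemma~\ref{lem:solution-gcd} rather than re-derived); what your route buys is the avoidance of the second Poisson summation and of the enlargement of $\Ov$ to $\ft^{-1}\Ov$, replaced by a single transparent geometry-of-numbers estimate.
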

\begin{proof}
  By Poisson summation~\eqref{eq:poisson-K}, we have
  \begin{align*}
    {}& \sum_{m\in\Ok} V\Big(\frac m{s_1}\Big) \Big| \sum_{n\in\Ok} V\Big(\frac n{s_2}\Big) \e\Big(\tc{\frac{n(\alpha m + \beta)}{q}}\Big)\Big| \\
    \leq {}& N(s_2) \sum_{m\in\Ok} V\Big(\frac m{s_1}\Big)  \sum_{\xi\in\Ov} \Big|{\hat V}\Big(s_2\Big(\frac{\alpha m + \beta}{q} + \xi\Big)\Big)\Big| \\
    \leq {}& N(s_2) \sum_{m\in\Ok} V\Big(\frac m{s_1}\Big)  \sum_{\xi\in\ft^{-1}\Ov} \Big|{\hat V}\Big(s_2\Big(\frac{\alpha m + \beta}{q} + \xi\Big)\Big)\Big| \\
    \leq {}& N(s_2) \sum_{m_0\in\Ok/q} \sum_{\xi\in\ft^{-1}\Ov} \Big|{\hat V}\Big(s_2\Big(\frac{\alpha m_0 + \beta}{q} + \xi\Big)\Big)\Big| \ssum{m\in\Ok \\ m \equiv m_0\mod{q}} V\Big(\frac{m}{s_1}\Big).
  \end{align*}
  Again by Poisson summation,
  \begin{align*}
    \Bigg|\ssum{m\in\Ok \\ m \equiv m_0\mod{q}} V\Big(\frac{m}{s_1}\Big)\Bigg|
    = {}& \frac{N(s_1)}{N(q)} \Bigg|\sum_{\omega\in\Ov} {\hat V}\Big(\frac{s_1 \omega}{q}\Big) \e\Big(\tc{\frac{-m_0 \omega}q}\Big)\Bigg| \\
    \leq {}& \frac{N(s_1)}{N(q)} \sum_{\omega\in\Ov} \Big|{\hat V}\Big(\frac{s_1 \omega}{q}\Big)\Big| \\
    \ll_V {}&  \frac{N(s_1)}{N(q)}.
  \end{align*}
  Next, by Lemma~\ref{lem:solution-gcd} with~$\ft\gets \ft \fD_K$ (where we recall that~$\fD_K=(\Ov)^{-1}$ is the different ideal), for each~$\gamma\in\ft^{-1}\Ov/q$, the number of~$m_0\in\Ok/q$ such that~$\alpha m_0 + \beta \equiv \gamma \mod{q\ft^{-1}\Ov}$ is at most~$N(\alpha \ft \fD_k + (q)) \ll N(\alpha\ft+(q))$. Therefore,
  \begin{align*}
    \sum_{m_0\in\Ok/q} \sum_{\xi\in\ft^{-1}\Ov} \Big|{\hat V}\Big(s_2\Big(\frac{\alpha m_0 + \beta}{q} + \xi\Big)\Big)\Big|
    \ll {}& N(\alpha \ft + (q)) \sum_{\xi\in\ft^{-1}\Ov} \sum_{\gamma\in\ft^{-1}\Ov/q} \Big|{\hat V}\Big(s_2\Big(\frac{\gamma}{q} + \xi\Big)\Big)\Big| \\
    = {}& N(\alpha \ft + (q)) \sum_{\xi\in\ft^{-1}\Ov} \Big|{\hat V}\Big(\frac{s_2 \xi}{q}\Big)\Big| \\
    \ll_V {}& N(\alpha \ft + (q)) \sum_{\xi\in\ft^{-1}\Ov} \frac1{(1+\|\xi\|)^{d+1}}.
  \end{align*}
  By Lemma~\ref{lem:lattice-count} and partial summation, we obtain~$\sum_{\xi\in\ft^{-1}\Ov} (1+\|\xi\|)^{-d-1} \ll_V N(\ft)$, which concludes our proof.
\end{proof}

\begin{lemma}\label{lem:sum-gcd-h}
  Let~$R\geq 0$, $\ft$ be an integral ideal, and~$q\in\Ok$. Then
  $$ \ssum{h\in \ft^{-1}\Ok \\ 0<\|h\| \leq R} N(h\ft + (q)) \ll \tau(q) R^d N(\ft), $$
  where~$\tau(q)$ is the number of integral ideal divisors of~$(q)$, and the implicit constant depends on~$K$ only.
\end{lemma}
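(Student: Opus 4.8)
The plan is to classify the terms of the sum according to the greatest common divisor ideal $\fd := h\ft + (q)$. First I would observe that for any nonzero $h\in\ft^{-1}\Ok$ the product $h\ft$ is a nonzero integral ideal, since $h\ft\subseteq\ft^{-1}\ft\Ok=\Ok$; consequently $\fd=h\ft+(q)$ is an integral divisor of $(q)$, and as $h$ varies the quantity $N(\fd)$ ranges over the norms of the (finitely many, since $q\neq 0$) $\tau(q)$ integral ideal divisors of $(q)$. This gives the exact decomposition
$$ \ssum{h\in \ft^{-1}\Ok \\ 0<\|h\| \leq R} N(h\ft + (q)) = \sum_{\fd \mid (q)} N(\fd)\,\card\{h\in\ft^{-1}\Ok:\ 0<\|h\|\leq R,\ h\ft+(q)=\fd\}. $$

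Next, for each fixed divisor $\fd\mid(q)$, I would bound the inner cardinality from above by weakening the condition $h\ft+(q)=\fd$ to $\fd\mid h\ft$, which is equivalent to $h\in\fd\ft^{-1}$. The set $\fd\ft^{-1}$ is a fractional ideal of norm $N(\fd)N(\ft)^{-1}$, so Lemma~\ref{lem:lattice-count} applied with $\fa=\fd\ft^{-1}$ shows this cardinality is $\ll N(\ft)N(\fd)^{-1}R^d$. Substituting and summing over the divisors of $(q)$,
$$ \ssum{h\in \ft^{-1}\Ok \\ 0<\|h\| \leq R} N(h\ft + (q)) \ll \sum_{\fd\mid(q)} N(\fd)\cdot N(\ft)N(\fd)^{-1}R^d = \tau(q)\,N(\ft)\,R^d, $$
which is precisely the asserted bound.

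There is no genuine obstacle here; the only points to watch are the bookkeeping with (possibly fractional) ideals and, above all, that the weight $N(\fd)$ cancels \emph{exactly} against the $N(\fd)^{-1}$ coming from the lattice-point count in Lemma~\ref{lem:lattice-count}. It is this exact cancellation that produces the clean divisor-counting factor $\tau(q)$ rather than a divisor-sum. One should also keep in mind that $q\neq 0$ throughout the paper, so that $\tau(q)<\infty$ and the estimate is non-vacuous.
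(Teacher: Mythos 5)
Your argument is correct and coincides with the paper's proof: both sort the sum according to the divisor $\fd = h\ft+(q)$ of $(q)$, relax the condition to $h\in\fd\ft^{-1}$, and apply Lemma~\ref{lem:lattice-count} to the fractional ideal $\fd\ft^{-1}$ of norm $N(\fd)N(\ft)^{-1}$, so that the weight $N(\fd)$ cancels and the sum over the $\tau(q)$ divisors gives the bound. Nothing further is needed.
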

\begin{proof}
  In our sum, we sort according to the ideal~$\fd=h\ft + (q)$ and use Lemma~\ref{lem:lattice-count}, getting
  \begin{align*}
    \ssum{h\in\ft^{-1}\Ok \\ 0<\|h\| \leq R} N(h\ft + (q)) \leq {}& \sum_{\fd\mid q} N(\fd) \ssum{h\in \fd \ft^{-1}\Ok \\ 0<\|h\|\leq R} 1 \\
    {}& \ll \sum_{\fd\mid q} R^d N(\ft) \\
    {}& \ll \tau(q) R^d N(\ft).
  \end{align*}
\end{proof}

\begin{lemma}\label{lem:sum-gcd-hh}
  Let~$\ft$ be an integral ideal, $q\in\Ok$ and~$R_0, R_1 \in\R_+$. Then
  $$ \ssum{h_0, h_1 \in \ft^{-1} \\ h_0 + h_1 \neq 0 \\ \| h_j \| \leq R_j} N((h_0+h_1)\ft + (q)) \ll \tau(q) N(\ft^2) (R_0+1)^d (R_0 + R_1)^d. $$
\end{lemma}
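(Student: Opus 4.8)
The plan is to reduce this two-variable sum to the one-variable sum of Lemma~\ref{lem:sum-gcd-h} via the substitution $h = h_0 + h_1$. First I would group the admissible pairs $(h_0, h_1)$ according to the value $h := h_0 + h_1 \in \ft^{-1}$; since the summand $N((h_0+h_1)\ft + (q)) = N(h\ft + (q))$ depends on the pair only through $h$, and since subadditivity of $\|\cdot\|$ forces $0 < \|h\| \le R_0 + R_1$ on the support of the sum, this gives
$$ \ssum{h_0, h_1 \in \ft^{-1} \\ h_0 + h_1 \neq 0 \\ \| h_j \| \leq R_j} N((h_0+h_1)\ft + (q)) = \ssum{h\in\ft^{-1} \\ h\neq 0 \\ \|h\| \le R_0 + R_1} N(h\ft + (q))\, \card\big\{h_0 \in \ft^{-1} : \|h_0\| \le R_0,\ \|h - h_0\| \le R_1\big\}. $$

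Next I would bound the inner cardinality uniformly in $h$. Dropping the constraint $\|h-h_0\|\le R_1$, it is at most $\card\{h_0 \in \ft^{-1} : \|h_0\| \le R_0\}$; applying Lemma~\ref{lem:lattice-count} to the fractional ideal $\fa = \ft^{-1}$ (so that $N(\fa)^{-1} = N(\ft)$) to count the nonzero points, and adding the single point $h_0 = 0$, this is $\ll N(\ft) R_0^d + 1 \ll N(\ft)(R_0+1)^d$, where the last step uses $N(\ft)\ge 1$ (valid since $\ft$ is integral).

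Finally I would estimate the remaining sum over $h$ by Lemma~\ref{lem:sum-gcd-h} with $R \gets R_0 + R_1$, noting that $\ft^{-1}\Ok = \ft^{-1}$; this gives $\ll \tau(q)(R_0+R_1)^d N(\ft)$. Multiplying the two estimates produces $\ll \tau(q)\, N(\ft)^2 (R_0+1)^d(R_0+R_1)^d$, which is the claimed bound since $N(\ft^2) = N(\ft)^2$.

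This argument is essentially bookkeeping layered on top of the previous lemmas, so there is no genuine obstacle; the only points requiring a little care are to bound the fiber of the map $(h_0,h_1)\mapsto h_0+h_1$ by an $R_0$-ball rather than symmetrically --- which is exactly what makes the asymmetric factor $(R_0+1)^d(R_0+R_1)^d$ appear --- and to retain the harmless ``$+1$'' so as to cover the degenerate ranges $R_0 < 1$, where Lemma~\ref{lem:lattice-count} on its own would miss the contribution of the origin.
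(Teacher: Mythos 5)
Your proof is correct and follows essentially the same route as the paper: both arguments rest on the substitution $h = h_0+h_1$, the lattice-point count of Lemma~\ref{lem:lattice-count} for the fiber over a fixed $h$, and the sorting of $h$ by the ideal $\fd = h\ft+(q)$ dividing $(q)$ (which you delegate to Lemma~\ref{lem:sum-gcd-h} as a black box, while the paper redoes that sorting inline). The bookkeeping, including the $(1+R_0)^d$ factor from retaining the origin and the asymmetric treatment of the two radii, matches the paper's estimate exactly.
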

\begin{proof}
  Given a non-zero fractional ideal~$\fa\subset\ft^{-1}$, we have
  \begin{align*}
    \ssum{h_0, h_1 \in \ft^{-1} \\ \|h_j\| \leq R_j \\ 0 \neq h_0 + h_1 \in \fa } 1
    \leq {}& \ssum{h_0 \in \ft^{-1} \\ \|h_0\|\leq R_0} \ssum{h' \in \fa\smallsetminus\{0\} \\ \|h'\| \leq R_0 + R_1} 1 \ll N(\ft\fa^{-1}) (1+R_0)^d(R_0+R_1)^d
  \end{align*}
  By Lemma~\ref{lem:lattice-count}. The conclusion follows by setting~$\fa = \fd \ft^{-1}$ and summing over~$\fd\mid q$, against~$N(\fd) = \det(\fa) N(\ft)$, similarly as in Lemma~\ref{lem:sum-gcd-h}.
\end{proof}

\subsubsection{Incomplete $L^2$ bound on the Fourier transform}

The statements of this section depend of certain parameters which will be introduced later in Section~\ref{sec:type-ii-sums}. For now, we let~$\mu, \mu_0, \mu_1$ and~$\mu_2$ be natural numbers subject to
$$ \mu_0 < \mu_1 < \mu < \mu_2. $$
We let~$\sigma = \mu_2 - \mu_0$, and define, for all~$n\in\Ok$,
\begin{equation}
  g(n) = f_{\mu_2}(q^{\mu_0}n)\bar{f_{\mu_1}}(q^{\mu_0}n).\label{eq:def-g}
\end{equation}
We recall the definition of the discrete Fourier transform of~$g$,
\begin{equation}
  \gh(h) := \frac1{Q^{\sigma}} \sum_{u\in\Ok/q^\sigma} g(u)\e_\sigma(-uh).
\end{equation}

\begin{proposition}\label{prop:Fourier-middle}
  With the above notation and hypotheses, for all~$t\in K$ and~$\lambda\in\N$, if~$c^{-1}\mu_0 \leq \lambda \leq \sigma$, then we have
  $$ \ssum{h \in \Ov \\ \|h/q^{\sigma-\lambda}\|\leq 1} |\gh(h + t)|^2 \ll Q^{2(\mu_1 - \mu_0)} ( Q^{-\eta'' \gamma(\lambda)} + Q^{-\eta_1(\sigma-\lambda)}) $$
  where~$\eta'' = 2\eta_1\eta_2(2+\eta_1)^{-1}(1+\eta_2)^{-1}$.
\end{proposition}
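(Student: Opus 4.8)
The overall strategy mirrors Lemma~10 of~\cite{MR}, but with the interval Fourier transform replaced by the $L^2$-control on $\psi_\lambda$ furnished by Lemma~\ref{lem:Fourier-psi}. The starting point is to unfold the definition of $\gh$ and open the square: $|\gh(h+t)|^2$ is a double sum over $u, u'\in\Ok/q^\sigma$ of $g(u)\bar{g(u')}\e_\sigma((u'-u)(h+t))$. Summing over $h$ in the box $\|h/q^{\sigma-\lambda}\|\le 1$, one wants to introduce a smooth nonnegative majorant $V$ with compactly supported Fourier transform (Lemma~\ref{lem:majo-fourier-trunc}) for the indicator of that box, and then execute Poisson summation over $h\in\Ov$; this collapses the $h$-sum to an (essentially diagonal) constraint forcing $u'-u$ to lie in $q^\lambda\Ok$ up to a bounded fractional error, so that we may write $u = w + q^\lambda v$-type decompositions and reduce to an estimate over the top $\sigma - \lambda$ digits against the bottom $\lambda$ digits. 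Here is where the hypothesis $\lambda\le\sigma$ is used, and where $c^{-1}\mu_0\le\lambda$ guarantees that the later appeal to the Fourier property (through Lemma~\ref{lem:FourierProp-mid}) is legitimate, since that lemma requires $\kappa\le c\lambda$.

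The heart of the matter is the splitting of $g(n) = f_{\mu_2}(q^{\mu_0}n)\bar{f_{\mu_1}}(q^{\mu_0}n)$ into a ``main'' contribution, governed by the Fourier property, and a ``carry'' contribution, governed by the Carry property. Writing $n$ in terms of its digits of index $<\lambda$ and $\ge\lambda$, one uses the Carry property (definition of $\eta_1$) to replace $f$ by $f_{\kappa+\rho}$ up to an exceptional set of size $O(Q^{\sigma-\eta_1(\sigma-\lambda)})$ — this produces the $Q^{-\eta_1(\sigma-\lambda)}$ term in the bound. Once the truncated function $f_{\kappa+\lambda}$ is in place, the inner sum over the middle digits falls within the scope of Lemma~\ref{lem:FourierProp-mid}, which delivers a gain of $Q^{-\eta'\gamma(\lambda)}$ with $\eta' = \eta_2(1+\eta_2)^{-1}$; combining this with the van der Corput-type doubling and the loss in the Carry step gives the composite exponent $\eta'' = 2\eta_1\eta_2(2+\eta_1)^{-1}(1+\eta_2)^{-1}$. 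The prefactor $Q^{2(\mu_1-\mu_0)}$ arises because $g$ only genuinely depends on about $\mu_2 - \mu_1$ of the top digits while the ``low'' block contributing trivially has size $Q^{\mu_1-\mu_0}$, and this block is counted twice after opening the square.

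Throughout, the lattice sums that appear after each Poisson summation — sums of $\hat V$ over translates of $\ft^{-1}\Ov$, and sums of $N(\cdot\,\ft + (q))$ — are controlled by the lemmas of the preceding subsection (Lemmas~\ref{lem:lattice-count}, \ref{lem:solution-gcd}, \ref{lem:smooth-doublesum}, \ref{lem:sum-gcd-h}); the key structural point, already emphasized in the text, is that because we work with the ideal $\Ok$ (and its fractional ideals) rather than an arbitrary lattice, the multiplication-by-$q$ skewing does not degrade these counts, so the bounds are clean powers of $Q$ with only polynomial-in-$\lambda$ losses (which are absorbed since the final statement has no explicit polynomial factor — those are swallowed by choosing the implied constant to depend on $K,q,\cN$ and by the fact that $\gamma$ is non-decreasing, so a $\lambda^{O(1)}$ factor is dominated after a harmless adjustment of $\eta''$, or more carefully, kept track of and checked not to matter at the scale $\gamma(\lambda)\asymp\lambda$). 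The main obstacle I expect is bookkeeping: correctly coordinating the three truncation parameters (the carry depth $\rho = \sigma-\lambda$, the Fourier-truncation parameter $\tau$ internal to Lemma~\ref{lem:FourierProp-mid}, and the smoothing scale in the Poisson step) so that the exceptional sets from Lemma~\ref{lem:carry} and Lemma~\ref{lem:Fourier-psi}~(ii) are simultaneously negligible, and verifying that the constraint $c^{-1}\mu_0\le\lambda\le\sigma$ is exactly what makes every application of Lemma~\ref{lem:FourierProp-mid} and the van der Corput inequality admissible; the analytic inputs themselves are all in hand.
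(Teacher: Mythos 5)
There is a genuine gap, and it lies in your very first move. You open the square and Poisson-sum over $h$, which collapses the $h$-variable into a near-diagonal constraint $u'-u\in q^{\lambda}B+q^{\sigma}\Ok$ for a bounded region $B$; after that the quantity to bound is essentially $Q^{-\sigma-\lambda}\sum_{w\in\Ok\cap q^{\lambda}B}\bigl|\sum_{u}g(u+w)\bar{g}(u)\bigr|$, i.e.\ absolute values of \emph{individual autocorrelations} of $g$ at $\asymp Q^{\lambda}$ shifts of size $\asymp q^{\lambda}$. Neither the Carry property nor the Fourier property controls such sums: the Carry property only lets you decouple the digits below and above index $\lambda$, after which the high-digit factor becomes $\sum_{v}F(v+c)\bar{F}(v)$ with a \emph{bounded} carry shift $c$, and such short-shift autocorrelations are not small in general (already for $f(n)=\e(\tc{\alpha s_{q,\cD}(n)})$ and the shift $w=q^{\lambda}$ the sum has no cancellation). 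The Fourier property~\eqref{eq:Fourier-prop} gives cancellation against \emph{linear} phases $\e(\tc{tv})$, not against products $F(v+c)\bar F(v)$, so once you have Poisson-summed the $h$-variable away you have forfeited exactly the mechanism that makes the statement true. The actual proof never squares $\gh$: it uses the Carry property, with a \emph{free} depth parameter $\rho_3\le\min(\lambda,\sigma-\lambda)$, to write $\gh(h+t)$ (up to an exceptional term) as a short sum over $\ell\in\Ov/q^{\rho_3}$ of products $d_{h}(\ell)\cdot(\text{average of }c_{h}(w))$, where $c_h(w)$ involves only the bottom $\lambda$ digits and $d_h(\ell)$ only the top $\sigma-\lambda$ digits; the bottom factor is bounded \emph{pointwise} by $Q^{-\eta'\gamma(\lambda)+\mu_1-\mu_0+\rho_3}$ via Lemma~\ref{lem:FourierProp-mid}, and only then is the sum over $h$ performed, with Parseval applied to $d_h(\ell)$ (using that the box $\|h/q^{\sigma-\lambda}\|\le1$ meets each class mod $q^{\sigma-\lambda}$ boundedly often). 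The exceptional-carry contribution is handled by Parseval and gives $Q^{-\eta_1\rho_3}$.

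A secondary error in your bookkeeping: you fix the carry depth at $\rho=\sigma-\lambda$ and claim this ``produces the $Q^{-\eta_1(\sigma-\lambda)}$ term''. In fact the factorization costs a factor $Q^{\rho_3}$ in the main term (the sum over $\ell\bmod q^{\rho_3}$ together with the $+\rho_3$ loss in $c_h(w)$), so taking $\rho_3=\sigma-\lambda$ would give a main term $Q^{2(\mu_1-\mu_0)+2(\sigma-\lambda)-2\eta'\gamma(\lambda)}$, which is far weaker than claimed when $\sigma-\lambda$ is large. The stated exponent $\eta''=2\eta_1\eta_2(2+\eta_1)^{-1}(1+\eta_2)^{-1}$ comes from optimizing $\rho_3$ between $Q^{2\rho_3-2\eta'\gamma(\lambda)}$ and $Q^{-\eta_1\rho_3}$ (there is no van der Corput doubling here), and the term $Q^{-\eta_1(\sigma-\lambda)}$ is precisely what survives when the optimal $\rho_3$ is capped by the constraint $\rho_3\le\sigma-\lambda$. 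Your heuristic for the prefactor $Q^{2(\mu_1-\mu_0)}$ (trivial treatment of the block of $\mu_1-\mu_0$ low digits, squared) is correct, and the role you assign to the hypothesis $c^{-1}\mu_0\le\lambda\le\sigma$ is also right, but the argument as outlined does not close without reverting to the paper's unsquared factorization.
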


\begin{proof}
  The proof mirrors that of~\cite{MR}: the point is that we may use the carry property to essentially factor~$\gh(h+t)$ as a sum over~$\cN_\lambda$ times a sum over~$\cN_{\sigma-\lambda}$. Parseval's identity will be applied to the second factor, to recover the full~$h$-sum, while the Fourier property on the first factor will allow for an extra saving. For each~$h\in\cN_{\sigma-\lambda}$, we write
  \begin{align*}
    \gh(t) = {}& \frac1{Q^\sigma} \sum_{u\in\cN_\lambda} \sum_{v\in\cN_{\sigma-\lambda}} g(u+q^\lambda v) \e_\sigma(-ut)\e_{\sigma-\lambda}(-vt).
  \end{align*}
  Here, we have by periodicity
  $$ g(u + q^\lambda v) = f(q^{\mu_0}(u + q^\lambda v)) \bar{f_{\mu_1}(q^{\mu_0}u)}. $$
  Let~$\rho_3 \leq \sigma - \lambda$. By the carry property~\eqref{eq:carry-prop}, we have
  $$ f(q^{\mu_0}(u + q^\lambda v)) = f_{\mu_0 + \lambda + \rho_3}(q^{\mu_0}(u+q^\lambda v)) f(q^{\mu_0+\lambda}v) \bar{f_{\mu_0+\lambda+\rho_3}(q^{\mu_0+\lambda}v)} $$
  except when~$u+q^\lambda v\in\cW_{\rho_3}$, for some set~$\cW_{\rho_3}$ of cardinality at most~$Q^{\sigma-\eta_1\rho_3}$. Therefore,
  $$ \gh(t) = G_1(t) + G_2(t), $$
  \begin{align*}
    G_1(t) = {}& \frac1{Q^\sigma} \sum_{u\in\cN_\lambda} \sum_{v\in\cN_{\sigma-\lambda}} f_{\mu_0 + \lambda + \rho_3}(q^{\mu_0}(u+q^\lambda v)) f(q^{\mu_0+\lambda}v) \times \\
    {}& \hspace{6em} \times \bar{f_{\mu_0+\lambda+\rho_3}(q^{\mu_0+\lambda}v)}  \bar{f_{\mu_1}(q^{\mu_0}u)} \e_\sigma(-ut)\e_{\sigma-\lambda}(-vt), \\
    G_2(t) = {}& \frac1{Q^\sigma} \sum_{w\in\cN_\sigma} b(w) \e_\sigma(-wt),
  \end{align*}
  with~$|b(w)|\leq 2$, supported on~$\cW_{\rho_3}$.

  In the sum~$G_1(t)$, we detect the congruence class~$w = v\mod{q^{\rho_3}}$ by orthogonality, and write
  $$ f_{\mu_0+\lambda+\rho_3}(q^{\mu_0+\lambda}v) = f_{\mu_0+\lambda+\rho_3}(q^{\mu_0+\lambda}w), $$
  $$ f_{\mu_0 + \lambda + \rho_3}(q^{\mu_0}(u+q^\lambda v)) = f_{\mu_0 + \lambda + \rho_3}(q^{\mu_0}(u+q^\lambda w)). $$
  We obtain
  $$ G_1(t) = \sum_{\ell\in\Ov/q^{\rho_3}} d_h(\ell) \frac1{Q^{\rho_3}} \sum_{w\in\Ok/q^{\rho_3}} \e_{\rho_3}(-\ell w) c_h(w), $$
  where
  $$ d_t(\ell) = \frac1{Q^{\sigma-\lambda}} \sum_{v\in\cN_{\sigma-\lambda}} f(q^{\mu_0+\lambda}v) \e_{\sigma-\lambda}(-vt) \e_{\rho_3}(v\ell), $$
  $$ c_h(w) = \frac{\bar{f_{\mu_0+\lambda+\rho_3}(q^{\mu_0+\lambda}w)}}{Q^\lambda} \sum_{u\in\cN_\lambda} f_{\mu_0+\lambda+\rho_3}(q^{\mu_0}(u+q^\lambda w)) \bar{f_{\mu_1}(q^{\mu_0}u)} \e_\sigma(-ut). $$
  By splitting again~$u = u_0 + q^{\mu_1 - \mu_0} u_1$ with~$u_0\in\cN_{\mu_1-\mu_0}$ and~$u_1 \in \cN_{\lambda-\mu_1+\mu_0}$, we get that under the additional assumption~$\rho_3\leq \lambda$,
  \begin{align*}
    |c_h(w)| \leq {}& \frac{1}{Q^{\mu_1-\mu_0}} \sum_{u_0 \in \cN_{\mu_1-\mu_0}} \Big| \sum_{u_1\in\cN_{\lambda-\mu_1+\mu_0}} f(q^{\mu_0}(u_0 + q^{\mu_1-\mu_0}u_1 + q^\lambda w)) \e_{\sigma-\mu_1+\mu_0}(-u_1t) \Big| \\
    \ll {}& Q^{-\eta' \gamma(\lambda+\rho_3) + \mu_1 - \mu_0 + \rho_3}
  \end{align*}
  by Lemma~\ref{lem:FourierProp-mid}. We can now sum over~$h$. Using the Cauchy--Schwarz inequality and Parseval's equality as in~\cite{MR}, we get
  $$ \ssum{h \in \Ov \\ \|h/q^{\sigma-\rho}\|\leq 1} |G_1(h+t)|^2 \leq \sup_{\substack{t'\in K \\ w\in\Ok/q^{\rho_3}}} \abs{c_{t'}(w)}^2 \sup_{\ell\in\Ov/q^{\rho_3}} \ssum{h\in\Ov \\ \|h/q^{\sigma-\lambda}\|\leq 1} \abs{d_h(\ell)}^2. $$
  We write
  $$ \ssum{h\in\Ov \\ \|h/q^{\sigma-\lambda}\|\leq 1} \abs{d_{h+t}(\ell)}^2 = \sum_{\alpha\in\Ov/q^{\sigma-\lambda}} \abs{d_{\alpha+t}(\ell)}^2 \ssum{h\in\Ov \\ h-\alpha\in q^{\sigma-\lambda}\Ov \\ \|h/q^{\sigma-\lambda}\|\leq 1} 1. $$
  Note that the last sum is~$O(1)$, and the remaining sum over~$\alpha$ is again~$O(1)$ by Parseval's identity. We deduce
  $$ \ssum{h \in \Ov \\ \|h/q^{\sigma-\rho}\|\leq 1} |G_1(h+t)|^2 \ll Q^{-2\eta' \gamma(\lambda) + 2(\mu_1 - \mu_0 + \rho_3)}. $$

  On the other hand, by Parseval's equality and reasoning as above,
  $$ \ssum{h\in\Ov \\ \|h/q^{\sigma-\lambda}\|\leq 1} |G_2(h+t)|^2 \leq \ssum{h\in\Ov \\ \|h/q^\sigma\|\ll 1} |G_2(h+t)|^2 \ll \frac1{Q^\sigma} \sum_{w\in\cN_\sigma} |b(w)|^2 \ll Q^{-\eta_1\rho_3}, $$
  and by optimising~$\rho_3$ (note that we always have~$\eta'\gamma(\lambda)\leq \lambda$ by~\eqref{eq:bestbound-gamma}), the result follows.
\end{proof}

\section{Type I sums}\label{sec:type-i-sums}

The following estimate is a generalization of Proposition~1 of~\cite{MR}.

\begin{proposition}\label{prop:type-i}
  Let~$f:\O \to \C$ satisfy the Carry and Fourier properties~\eqref{eq:carry-prop}--\eqref{eq:Fourier-prop}. Let~$V_0:\R^d\to\C$ be a smooth map, compactly supported inside~$\R^d\smallsetminus\{0\}$. Let~$V = V_0 \circ \iota^{-1} : K \to \C$, define~${\hat V} = {\hat V_0} \circ (\iota^\vee)^{-1}$ and
  $$ \Sigma_V := \sum_{\xi\in\Ov} |{\hat V}(\xi)|. $$
  Then for $\mu \leq \frac c{c+2}\nu$, we have
  \begin{equation}
    S_I = \sum_{m\in\cN_\mu} \Big| \sum_{n\in \Ok} V\Big(\frac{mn}{q^{\mu + \nu}}\Big)f(mn) \Big| \ll \Sigma_V\mu^{d+1} Q^{\mu + \nu - \frac{\eta_1}{1+\eta_1}\gamma(\nu-\mu)}.\label{eq:bound-tI}
  \end{equation}
  The implied constant depends on~$(q, \cD)$, and on the diameter of the support of~$V$.
\end{proposition}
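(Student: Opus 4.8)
The strategy follows the classical Type~I argument of Mauduit--Rivat, adapted to the lattice~$\Ok$. The key idea is to separate the variable~$n$ into ``low'' and ``high'' digit blocks, the low block recording~$n\bmod q^{\nu-\mu}$ and the high block being governed by the smooth weight~$V$, and then to apply the Fourier property of~$f$ to the low block after a carry-propagation truncation. First I would write~$f(mn) = f_{\kappa}(mn)$ plus an error controlled by the Carry property, choosing the truncation level~$\kappa$ to be of the form~$\mu + (\nu-\mu) + \rho$ for a parameter~$\rho \leq \nu - \mu$ to be optimized; the constraint~$\mu \leq \frac{c}{c+2}\nu$ is exactly what is needed so that~$\kappa \leq c\nu$ and the Fourier property~\eqref{eq:Fourier-prop} can be invoked. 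The carry error contributes~$O(Q^{\mu+\nu-\eta_1\rho})$ after summing over~$m\in\cN_\mu$ and estimating the~$n$-sum trivially using the support of~$V$ (here the bound~$V_0(x)\ll(1+\|x\|)^{-d-1}$ and Lemma~\ref{lem:lattice-count} give~$\sum_n |V(mn/q^{\mu+\nu})| \ll Q^\nu/N(m)$, and~$\sum_{m\in\cN_\mu} N(m)^{-1} \ll \mu^d$ by Lemma~\ref{lem:addchar-count}-type reasoning or directly).

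Next, for the main term~$\sum_m |\sum_n V(mn/q^{\mu+\nu}) f_\kappa(mn)|$, I would split~$n = n_0 + q^{\nu-\mu} n_1$ with~$n_0\in\cN_{\nu-\mu}$, so that~$f_\kappa(mn)$ depends on~$n_1$ only through~$mn_1 \bmod q^{\text{(something)}}$ in a controlled way; more precisely, since~$f_\kappa$ reads only the lowest~$\kappa$ digits and~$\kappa \approx \nu$, the dependence on~$n_1$ enters via the Fourier/orthogonality expansion. I would apply Poisson summation~\eqref{eq:poisson-K} to the smooth~$n_1$-sum, producing a sum over the dual lattice~$\Ov$ weighted by~$\hat V$, which is where~$\Sigma_V$ appears. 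This reduces matters to bounding, for each frequency~$\xi\in\Ov$ and each residue~$n_0$, a sum of the shape~$\sum_{v\in\cN_{\nu-\mu}} f(q^{\kappa_0} v)\, \e(\tc{tv})$ for suitable~$t\in K$ and~$\kappa_0$; this is precisely the Fourier property~\eqref{eq:Fourier-prop}, or its ``middle digits'' refinement Lemma~\ref{lem:FourierProp-mid}, giving a gain of~$Q^{-\gamma(\nu-\mu)}$ (up to the factor~$\eta_2/(1+\eta_2)$ absorbed into the exponent~$\eta_1/(1+\eta_1)$ after combining with the carry optimization). Summing the geometric-type contributions over~$m\in\cN_\mu$ and over~$\xi$ produces the factors~$\Sigma_V$, $\mu^{d+1}$ (the extra power of~$\mu$ over~$\mu^d$ coming from the unit-counting Lemma~\ref{lem:count-units} when sorting~$m$ by the ideal it generates), and~$Q^{\mu+\nu}$.

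Finally I would optimize~$\rho$: the total bound is of the form~$Q^{\mu+\nu}(Q^{-\eta_1\rho} + Q^{-\gamma(\nu-\mu)+O(\rho)})$, and balancing~$\eta_1\rho$ against~$\gamma(\nu-\mu)$ — using~\eqref{eq:bestbound-gamma} to ensure~$\rho \leq \nu-\mu$ is compatible — yields the exponent~$\frac{\eta_1}{1+\eta_1}\gamma(\nu-\mu)$. The main obstacle, and the place where the number-field setting genuinely differs from~\cite{MR}, is the lattice point counting in the Poisson step and in the final~$m$-summation: the multiplication-by-$q$ map is not a similarity, so dilates~$q^\nu\Ok$ are skewed, and one must route all counting through ideals (Lemmas~\ref{lem:lattice-count}, \ref{lem:count-units}, \ref{lem:solution-gcd}) and use the unit group to normalize generators so that all archimedean absolute values are comparable to~$N(\cdot)^{1/d}$. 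A secondary technical point is that~$\chih_{\FR}$ does not decay fast enough to truncate the Poisson sum naively; but because~$V$ itself is smooth and compactly supported away from~$0$, the truncation here is clean and the fractal tile only enters through the already-established Lemma~\ref{lem:FourierProp-mid}, which is why the hypotheses have been arranged to reduce to that lemma rather than to bounds on~$\chih_{\FR}$ directly.
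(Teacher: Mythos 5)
There is a genuine gap: your argument never confronts the average over the moduli~$m$, which is the heart of this estimate. The argument of~$f$ is~$mn$, so after your split~$n=n_0+q^{\nu-\mu}n_1$ the relevant digit block of~$mn$ is \emph{not} a free summation variable: as~$n$ varies, $mn$ runs over the progression~$m\Ok$, and the Fourier property~\eqref{eq:Fourier-prop} only applies to complete sums~$\sum_{v\in\cN_\lambda}f(vq^{\kappa})\e(\tc{tv})$, not to sums restricted to multiples of~$m$. The standard remedy (and the paper's route) is to detect~$m\mid \ell$ by additive characters~$k/m$, which you do not spell out; but if one does this and then bounds the resulting expression for each fixed~$m$ (carry truncation plus Fourier property pointwise in~$t$), the subsequent trivial summation over~$m\in\cN_\mu$ costs a factor~$Q^{\mu}$, whereas the available gain is only~$Q^{-\gamma(\nu-\mu)}$ with~$\gamma(\nu-\mu)\leq(\nu-\mu)/2$ by~\eqref{eq:bestbound-gamma}; in the relevant range~$\mu\leq\frac{c}{c+2}\nu$ with~$c$ large one has~$\nu-\mu\ll\mu$, so this loss is fatal. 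Your assertion that the~$m$-sum only produces the polynomial factor~$\mu^{d+1}$ is therefore unjustified. In the paper, the average over~$m$ and over the numerators~$k\in\Ov/m$ is exploited nontrivially: the sum is reorganized by proper additive characters~$\sigma\bmod\fm$ (Lemma~\ref{lem:addchar-count}), the splitting point of the digits of~$\ell=u+q^{\kappa}v$ is chosen \emph{modulus-dependently}, $Q^{\kappa_\fm}\asymp N(\fm)^2$, and Cauchy--Schwarz plus the large sieve inequality (Lemma~\ref{lem:large-sieve}) deliver square-root cancellation over the points~$k/m$; as the remark following the proposition indicates, the large sieve is precisely what dictates the admissible range of~$\mu$. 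Nothing in your proposal plays this role, and without it (or a substitute giving comparable cancellation over the Farey points) the claimed bound cannot be reached.

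Two secondary inaccuracies. First, the Carry property~\eqref{eq:carry-prop} does not directly license the replacement~$f(mn)=f_{\kappa}(mn)+\text{error}$: it controls discrepancies of correlations~$f(u_1+u_2+vq^{\kappa})\bar{f(u_1+vq^{\kappa})}$ against their truncations, which is why the paper factorizes~$f(\ell)\bar{f(vq^{\kappa})}\approx f_{\kappa+\rho_1}(\ell)\bar{f_{\kappa+\rho_1}(vq^{\kappa})}$ with~$\ell=u+q^{\kappa}v$, keeping the high-digit factor~$f(vq^{\kappa})$ intact for the Fourier property. Second, Lemma~\ref{lem:FourierProp-mid} and the constant~$\eta_2$ play no part in the type~I bound (they belong to the type~II analysis); the exponent~$\frac{\eta_1}{1+\eta_1}\gamma(\nu-\mu)$ comes solely from balancing the carry-error term~$Q^{-\eta_1\rho_1/2}$ against the main term~$Q^{\rho_1/2-\gamma(\nu-\mu)}$ after the large-sieve step, via the choice~$\rho_1=\frac{2}{1+\eta_1}\gamma(\nu-\mu)$.
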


\begin{remark}\ 
  \begin{itemize}
    \item The same bound holds, with the same proof, for the more general quantity
    \begin{equation}
      \sum_{m\in\cN_\mu} \max_{a\in\Ok / m} \Big| \sum_{n\in \Ok} V\Big(\frac{mn + a}{q^{\mu + \nu}}\Big)f(mn + a) \Big|.\label{eq:typeI-with-a}
    \end{equation}
    \item The bounds~\eqref{eq:bound-tI} and~\eqref{eq:typeI-with-a} can be viewed as a statement on cancellations of~$f(n)$ on average over arithmetic progressions~$n \equiv 0 \mod{m}$; this is an analogue of the Bombieri-Vinogradov theorem in the context of multiplicative number theory. Bounds of the type~\eqref{eq:bound-tI} go back to work of Fouvry and Mauduit~\cite{FouvryMauduit1996}. The quality of the bound~\eqref{eq:bound-tI} can be measured by the exponent of distribution, which is the maximum asymptotically allowable value for the ratio~$\frac{\mu}{\mu+\nu}$. As in~\cite{MR}, we have~$\vartheta = \frac c{2(c+1)}$, independently of~$\gamma$, and this value on the exponent is precisely the analogue of the Bombieri-Vinogradov theorem if~$c$ can be picked arbitrarily large; in both cases, the limitation arises from the large sieve inequality.
    \item Obtaining an exponent of distribution greater than~$1/2$ is a challenging question in general. In the sum-of-digits case~$f(n)=(-1)^{s_q(n)}$, such a result was obtained in~\cite{FouvryMauduit1996} with a value~$\vartheta\geq 0.55711$ (and a slightly larger exponent for~$q=2$). This has been improved to~$\vartheta \geq 2/3$ in~\cite{MuellnerSpiegelhofer2017}; a proof of the value~$\vartheta = 1$ has recently been announced by Spiegelhofer~\cite{Spiegelhofer2018}.
  \end{itemize}
\end{remark}

\begin{proof}
  First note that replacing~$\nu$ by~$\nu + C$, for some~$C$ depending on~$(q, \cD)$ and the diameter of~$\supp V$, and rescaling~$V$ accordingly, we may assume that~$V(x)\neq 0 \implies x\in\cF$. For any~$\ell\in\cN_{\mu+\nu}$, we have
  $$ V\Big(\frac{\ell}{q^{\mu+\nu}}\Big) = \ssum{u\in\Ok \\ q^{\mu+\nu} \mid u-\ell} V\Big(\frac{u}{q^{\mu+\nu}}\Big) $$
  by our hypothesis on the support of~$V$. Then
  \begin{align*}
    S_I {}& = \sum_{m\in\cN_\mu} \Big| \sum_{n\in \Ok} V\Big(\frac{mn}{q^{\mu + \nu}}\Big)f(mn) \Big| \\
    {}& = \sum_{m\in\cN_\mu} \frac1{N(m)} \Big| \sum_{k\in\Ov/m} \sum_{\ell\in \cN_{\mu+\nu}} \e\Big(\tc{\frac{k\ell}m}\Big )V\Big(\frac{\ell}{q^{\mu + \nu}}\Big)f(\ell) \Big| \\
    {}& = \sum_{m\in\cN_\mu} \frac1{N(m)Q^{\mu+\nu}} \Big| \sum_{k\in\Ov/m} \sum_{h\in\Ov/q^{\mu+\nu}} \sum_{\ell\in \cN_{\mu+\nu}} \e\Big(\tc{\frac{k\ell}m}\Big)
    \e_{\mu+\nu}(-h\ell) f(\ell) \times \\
    {}& \hspace{12em} \times \sum_{u\in\Ok} V\Big(\frac{u}{q^{\mu + \nu}}\Big) \e_{\mu+\nu}(hu) \Big|.
  \end{align*}
  The Poisson formula yields
  $$ \sum_{u\in\Ok} V\Big(\frac{u}{q^{\mu + \nu}}\Big) \e_{\mu+\nu}(hu) = Q^{\mu+\nu} \sum_{v\in\Ov} {\hat V}(h-q^{\mu+\nu}v), $$
  and so
  $$ \sum_{h\in\Ov/q^{\mu+\nu}} \Big| \sum_{u\in\Ok} V\Big(\frac{u}{q^{\mu + \nu}}\Big) \e_{\mu+\nu}(hu) \Big| \leq Q^{\mu+\nu} \sum_{v\in\Ov} |{\hat V}(v)| \ll \Sigma_V Q^{\mu+\nu}. $$
  Therefore,
  $$ S_I \ll \Sigma_V Q^{\mu+\nu} \sup_{t\in K} \sum_{m\in\cN_\mu} \frac1{N(m)} \sum_{k\in\Ov/m} \Big| {\hat f}_{\mu+\nu}\Big(t - \frac km q^{\mu+\nu}\Big)\Big|, $$
  where
  $$ {\hat f}_\lambda(t) = \frac1{Q^\lambda}\sum_{\ell\in \cN_\ell} f(\ell) \e_\lambda(-t \ell). $$

  Now, by computations identical to~\cite[pp. 2606-2607]{MR}, we write
  \begin{equation}
    {\hat f}_{\mu+\nu}(t) = G_{\kappa, 1}(t) + G_{\kappa, 2}(t),\label{eq:typeI-decomp-fGG}
  \end{equation}
  where
  \begin{align*}
    G_{\kappa, 1}(t) = \sum_{h\in \Ov/q^{\rho_1}} \Big( \frac1{Q^\kappa} {}& \sum_{u\in\cN_\kappa} c_{\kappa, \rho_1}(u, h) \e_{\mu+\nu}(-ut)\Big) \\
    {}& \times \Big( \frac1{Q^{\mu+\nu-\kappa}} \sum_{v\in\cN_{\mu+\nu-\kappa}} f(vq^\kappa) \e_{\mu+\nu-\kappa}(-tv) \e_{\rho_1}(hv) \Big),
  \end{align*}
  $$ c_{\kappa, \rho_1}(u, h) = \frac1{Q^{\rho_1}} \sum_{w\in\cN_{\rho_1}} f_{\kappa+\rho_1}(u+wq^\kappa) \bar{f_{\kappa+\rho_1}(wq^\kappa)} \e_{\rho_1}(-hw), $$
  and
  \begin{align*}
    G_{\kappa, 2}(t) = \frac1{Q^{\mu+\nu}} \sum_{(u, v)\in\cN_\kappa \times \cN_{\mu+\nu-\kappa}} {}& f(vq^\kappa) \e_{\mu+\nu}(-(u+vq^k)t) \\
    {}& \times \big( f(u+vq^\kappa)\bar{f(vq^\kappa)} - f_{\kappa+\rho_1}(u+vq^\kappa)\bar{f_{\kappa+\rho_1}(vq^\kappa)} \big).
  \end{align*}
  By the carry property~\eqref{eq:carry-prop}, we have~$f(u+vq^\kappa)\bar{f(vq^\kappa)} = f_{\kappa+\rho_1}(u+vq^\kappa)\bar{f_{\kappa+\rho_1}(vq^\kappa)}$ unless~$(u, v)$ belongs to a subset~$\cW_{\kappa,\rho_1}$ of~$\cN_\kappa \times \cN_{\mu+\nu-\kappa}$ of size at most
  $$ |\cW_{\kappa,\rho_1}| \ll Q^{\mu+\nu-\eta_1\rho_1}. $$
  If~$\kappa$ satisfies~$(c+1)\kappa \leq c(\mu+\nu)$, then we have
  \begin{equation}\label{eq:typeI-majoG1}
    G_{\kappa, 1}(t) \ll Q^{-\gamma(\mu+\nu-\kappa)} \sum_{h\in \Ov/q^{\rho_1}}  \frac1{Q^\kappa} \Big| \sum_{u\in\cN_\kappa} c_{\kappa, \rho_1}(u, h) \e_{\mu+\nu}(-ut)\Big|
  \end{equation}
  uniformly.

  For all~$m\in\cN_\mu$ and~$k\in\Ov/m$, there is a unique ideal~$\fm$ dividing~$m$, and proper additive character~$\sigma\mod{\fm}$ such that~$\sigma(\xi) = \e(\tc{\xi k/m})$ for all~$\xi\in\Ok$; we write~$(k, m) \sim \sigma$. Note that we have~$N(\fm) \ll Q^\mu$. We rearrange our sum as
  \begin{align*}
    {}& \sum_{m\in\cN_\mu} \frac1{N(m)} \sum_{k\in\Ov/m} \Big| {\hat f}_{\mu+\nu}\Big(t - \frac km q^{\mu+\nu}\Big)\Big| \\
    = {}& \ssum{\fm\text{ ideal} \\ N(\fm)\ll Q^\mu} \sum_{\sigma\mod{\fm}^\ast} \sum_{m\in\cN_\mu} \frac1{N(m)} \ssum{k\in\Ov/m \\ (m, k)\sim \sigma} \Big| {\hat f}_{\mu+\nu}\Big(t - \frac km q^{\mu+\nu}\Big)\Big|.
  \end{align*}
  For each~$\fm$ in this sum, we apply the decomposition~\eqref{eq:typeI-decomp-fGG} with the unique integer~$\kappa_\fm$ for which~$Q^{\kappa_\fm-1} < N(\fm)^2 \leq Q^{\kappa_\fm}$. Hence~$0\leq \kappa_\fm \leq 2\mu + C$ where~$C = 1+\floor{\frac{2\log R_\cF^*}{\log 2}}$. Call~$S_{I, 1}$, resp.~$S_{I,2}$ the contribution of~$G_{\kappa, 1}$, resp.~$G_{\kappa,2}$. The inequality~\eqref{eq:typeI-majoG1} holds if we assume~$\mu \leq \frac c{c+2}\nu - C\frac{c+1}{c+2}$. We obtain, using Cauchy--Schwarz,
  $$ S_{I,1} \ll \Sigma_V Q^{\mu+\nu+\rho_1/2} \sup_{t\in K}\sum_{\kappa=0}^{2\mu + C} \frac{(T_1(\kappa) T_2(\kappa))^{1/2}}{Q^{\kappa+\gamma(\mu+\nu-\kappa)}} , $$
  where
  $$ T_1(\kappa) :=  \sum_{h\in\Ov/q^{\rho_1}} \sum_{Q^{(\kappa-1)/2} < N(\fm) \leq Q^{\kappa/2}} \sum_{\sigma\mod{\fm}^\ast} \Big| \sum_{u\in\cN_\kappa} c_{\kappa, \rho_1}(u,h) \e_{\mu+\nu}(-ut) \sigma(u) \Big|^2.  $$
  $$ T_2(\kappa) :=  \sum_{Q^{(\kappa-1)/2} < N(\fm) \leq Q^{\kappa/2}} \sum_{\sigma\mod{\fm}^\ast} \Big( \sum_{m\in\cN_\mu} \ssum{k\in\Ov/m \\ (m, k)\sim \sigma} \frac1{N(m)} \Big)^2.  $$
  Using Lemma~\ref{lem:addchar-count}, we get~$T_2(\kappa) \ll \mu^{2d}$. On the other hand, by Lemma~\ref{lem:large-sieve}, we have
  $$ T_1(\kappa) \ll \kappa^{d(d-1)} \sum_{h\in\Ov/q^{\rho_1}} Q^{\kappa} \sum_{u\in \cN_\kappa} |c_{\kappa,\rho_1}(u, h)|^2 = \kappa^{d(d-1)}Q^{2\kappa}, $$
  and we conclude
  $$ S_{I,1} \ll \mu^{d^2} \Sigma_V Q^{\mu + \nu + \rho_1/2 - \gamma(\nu-\mu)} $$
  whenever~$\mu \leq \frac c{c+2}\nu-C\frac{c+1}{c+2}$ and~$\rho_1\leq \nu-\mu$.

  Let~$d_\kappa(u, v) := f(u+vq^\kappa)\bar{f(vq^\kappa)} - f_{\kappa+\rho_1}(u+vq^\kappa)\bar{f_{\kappa+\rho_1}(vq^\kappa)}$, which is of modulus at most~$2$ and vanishes unless~$(u, v)\in \cW_{\kappa,\rho_1}$. We have
  $$ |G_{\kappa,2}(t)| \leq Q^{-\mu-\nu} \sum_{v\in\cN_{\mu+\nu-\kappa}} \Big|\sum_{u\in\cN_\kappa} d_\kappa(u, v)\e_{\mu+\nu}(-ut) \Big|, $$
  from which we deduce, similarly as above,
  \begin{align*}
    S_{I, 2} \ll {}& \Sigma_V Q^{\mu+\nu} \sup_t \sum_{m\in\cN_\mu} \frac1{N(m)} \sum_{k\in\Ov/m} \Big| G_{\kappa,2}\Big(t-\frac km q^{\mu+\nu}\Big)\Big| \\
    \ll {}& \mu^d \Sigma_V \sup_t \sum_{\kappa=0}^{2\mu + C} Q^{-\kappa/2} \ssum{\fm\text{ ideal} \\ N(\fm)^2 \leq Q^{\kappa}} \sum_{\sigma\mod{\fm}^\ast} \sum_{v\in\cN_{\mu+\nu-\kappa}} \Big| \sum_{u\in\cN_\kappa} d_\kappa(u, v) \e_{\mu+\nu}(-ut) \sigma(u)\Big| \\
    \ll {}& \mu \Sigma_V Q^{\frac{\mu+\nu}2} \sup_t \sum_{\kappa=0}^{2\mu + C} Q^{-\kappa/2} \times \\
    {}& \qquad \qquad \times \Big( \sum_{v\in\cN_{\mu+\nu-\kappa}} \ssum{\fm\text{ ideal} \\ N(\fm) \leq Q^{\kappa}} \sum_{\sigma\mod{\fm}^\ast}\Big| \sum_{u\in\cN_\kappa} d_\kappa(u, v) \e_{\mu+\nu}(-ut) \sigma(u)\Big|^2 \Big)^{1/2} \\
    \ll {}& \mu^{d+1} \Sigma_V Q^{\mu+\nu - \eta_1\rho_1/2}.
  \end{align*}
  We choose~$\rho_1 = \frac{2}{1+\eta_1} \gamma(\nu-\mu)$. This gives the bound~\eqref{eq:bound-tI} if~$\mu \leq \frac c{c+2}\nu-C\frac{c+1}{c+2}$. If~$C>0$, then replacing~$\nu$ by~$\nu + \floor{C(c+1)/c}+1$ and rescaling~$V$ accordingly yields our result as stated.
\end{proof}

\section{Type II sums}\label{sec:type-ii-sums}

The following estimate is an analogue of Proposition~2 of~\cite{MR}, and is the core of the argument. Given a sequence~$(\alpha_m)_{m\in\O}$ and~$p\geq 1$, we denote by~$\|\alpha\|_p$ the usual~$\ell^p$ norm of~$(\alpha_m)$.

\begin{proposition}\label{prop:type-ii}
  Let~$f:\O \to \C$ satisfy the Carry and Fourier properties~\eqref{eq:carry-prop}--\eqref{eq:Fourier-prop}, for some~$c\geq 20\rh\rb^{-1}$. Let~$2\leq \mu \leq \nu$, $(\alpha_m)_{m\in\cN_\mu}$ and~$(\beta_n)_{n\in\cN_\nu}$ be two sequences of complex numbers, and~$\psi: K \to \R$ be a linear map. Then we have
  \begin{equation}
    S_{II} = \sum_{m\in\cN_\mu} \sum_{n\in\cN_\nu} \alpha_m \beta_n f(mn) \e(\psi(mn)) \ll \mu^{O(1)} \|\alpha\|_2 \|\beta\|_4 Q^{\mu/2+3\nu/4-\delta \gamma(\floor{\frac{\mu}{20\rh\rb^{-1}}})},\label{eq:bound-typeii}
  \end{equation}
  where
  $$ \delta = c\min\{\eta_1^2\eta_2, \eta_1\rb\} $$
  for some absolute constant~$c>0$. The implied constant depends at most on~$(q, \cD)$ and~$\|\cdot\|$.
\end{proposition}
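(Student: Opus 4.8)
The strategy follows the now-classical van der Corput/Cauchy--Schwarz scheme of \cite{MR}, adapted to~$\O$, and splits into four stages: (1) strip off the $\ell^2$ and $\ell^4$ norms and reduce to a smoothed bilinear form; (2) apply van der Corput's inequality (Lemma~\ref{lem:van-der-corput}) twice to transfer all the complexity onto the variable $n$, producing a fourfold sum whose inner part is a character sum in $m$; (3) expand the smoothed fundamental tile via Lemma~\ref{lem:Fourier-psi} and insert the incomplete $L^2$ bound of Proposition~\ref{prop:Fourier-middle}; (4) collect the lattice-point and gcd estimates (Lemmas~\ref{lem:lattice-count}, \ref{lem:sum-gcd-h}, \ref{lem:sum-gcd-hh}, \ref{lem:smooth-doublesum}) to bound the remaining arithmetic sums, optimize the free parameters, and arrive at~\eqref{eq:bound-typeii}.

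In more detail: first I would use Cauchy--Schwarz in $m$ to remove $\alpha_m$, at the cost of $\|\alpha\|_2$ and a factor $Q^{\mu/2}$, leaving $\sum_m |\sum_n \beta_n f(mn)\e(\psi(mn))|^2$ with $m$ now unweighted over $\cN_\mu$ (up to a smooth majorant $V(m/q^\mu)$ supplied by Lemma~\ref{lem:majo-fourier-trunc} so that later Poisson applications are clean). Opening the square introduces $n_1,n_2\in\cN_\nu$; here one applies van der Corput in the $n$-variable (choosing a carry length $\rho$ to be optimized, with $\rho+\kappa\le\nu$ as in Lemma~\ref{lem:van-der-corput}) to replace the pair $(n_1,n_2)$ by a difference $n+q^\kappa r$, $r\in\Delta_\rho$. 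A second Cauchy--Schwarz in $n$ (this is where $\|\beta\|_4$ enters, via $\|\beta\|_2^2\le\|\beta\|_4^2 Q^{\nu/2}$) and a second van der Corput get one to a form in which, after using the Carry property~\eqref{eq:carry-prop} to split $f(mn)$ into a "low digits of $m$" times "high digits" factorization, the sum over $m$ becomes a sum over $\cN_\mu$ of a product of an additive character $\e_\sigma(mh)$ (with $h$ a difference of shifted $n$-type variables) against the discrete Fourier transform $\widehat g$ of the middle-digit function $g$ defined in~\eqref{eq:def-g}. The parameters $\mu_0<\mu_1<\mu<\mu_2$ of Proposition~\ref{prop:Fourier-middle} are chosen so that $\mu_2-\mu_0\asymp \mu$ and $\mu_1-\mu_0$ is negligible; the hypothesis $c\ge 20\rh\rb^{-1}$ guarantees the admissibility constraints $c^{-1}\mu_0\le\lambda\le\sigma$ and $\kappa\le c\lambda$ needed to invoke it, and this is precisely where the distortion parameters $\rh,\rb$ force the somewhat large constant $20$.

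The combinatorial heart is then: for each fixed $n$-variable, the $m$-sum is estimated by opening $\widehat g$, applying Poisson summation over $\O$ (equation~\eqref{eq:poisson-K}) against the smooth majorant, and reducing to Lemma~\ref{lem:smooth-doublesum}, which produces a gcd factor $N(\alpha\ft+(q))$; summing this gcd factor over the $n$-type difference variable is exactly what Lemmas~\ref{lem:sum-gcd-h} and~\ref{lem:sum-gcd-hh} are built for, and the lattice counts use Lemma~\ref{lem:lattice-count} uniformly in the skewing because we work with ideals rather than arbitrary lattices. The Fourier gain $Q^{-\eta''\gamma(\lambda)}$ from Proposition~\ref{prop:Fourier-middle} survives one factor of Cauchy--Schwarz, and after balancing it against the carry losses $Q^{-\eta_1\rho}$ from the two van der Corput steps one sets $\lambda\asymp \mu/(20\rh\rb^{-1})$ and $\rho\asymp\gamma(\lambda)$, producing the exponent $\delta\gamma(\floor{\mu/(20\rh\rb^{-1})})$ with $\delta=c\min\{\eta_1^2\eta_2,\eta_1\rb\}$: the $\eta_1^2\eta_2$ comes from composing the Carry property with the $\eta''$ of Proposition~\ref{prop:Fourier-middle} (which already contains one $\eta_1\eta_2$), and the alternative $\eta_1\rb$ governs the regime where the Gelfand/skewing bounds~\eqref{eq:Gelfand} are the binding constraint.

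\textbf{Main obstacle.} I expect the delicate point to be bookkeeping the interplay between the van der Corput shift lengths, the Carry-property cutoff $\rho_1$, and the middle-digit parameters $\mu_0,\mu_1,\mu_2$ so that \emph{all} admissibility inequalities ($\rho+\kappa\le\nu$, $\kappa\le c\lambda$, $c^{-1}\mu_0\le\lambda\le\sigma$, and the ranges needed for Lemma~\ref{lem:FourierProp-mid}) hold simultaneously while the final gain remains a positive power of $\gamma$. A secondary technical nuisance is that, unlike in \cite{MR}, one cannot assume the multiplication-by-$q$ map is a similarity, so every Poisson/lattice step must carry the $\rh,\rb$-dependence explicitly through~\eqref{eq:Gelfand}; keeping these factors from degrading the exponent of distribution below $1/2$ is what dictates the hypothesis $c\ge 20\rh\rb^{-1}$.
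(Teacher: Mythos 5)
Your outline reproduces the architecture of the paper's argument (Cauchy--Schwarz, double van der Corput, reduction to the middle-digit function~$g$ of~\eqref{eq:def-g}, smoothing of the tile via Lemma~\ref{lem:Fourier-psi}, the incomplete $L^2$ bound of Proposition~\ref{prop:Fourier-middle}, and the gcd/lattice lemmas), but there is one concrete structural error at the pivotal step, and it is not a bookkeeping matter. You state that the two applications of Lemma~\ref{lem:van-der-corput} ``transfer all the complexity onto the variable $n$'', i.e.\ that both differencings act on $n$ (the second preceded by a Cauchy--Schwarz in $n$). In the actual proof, only the first shift is in $n$ (by $r\in\Delta_{\rho_2}^\ast$, in the low digits); the second van der Corput is applied to the \emph{inner $m$-sum}, shifting $m\mapsto m+q^{\mu_1}s$ with $s\in\Delta_{2\rho_1}^\ast$, i.e.\ in the \emph{top} digits of $m$. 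This choice is the heart of the Mauduit--Rivat type~II mechanism: since $(m+q^{\mu_1}s)n\equiv mn \pmod{q^{\mu_1}}$, the four truncations $f_{\mu_1}$ cancel in the resulting product, Lemma~\ref{lem:carryprop-plus}(3) reduces everything to $g$ evaluated at the middle digits $u_0,u_1$ of $mn$ and $mn+mr$ shifted by $q^{\mu_1-\mu_0}sn$, and it is precisely this linear appearance of $sn$ and $rh$ that produces, after Fourier expansion, the bilinear structure ($U_1$, $U_2$, the diagonal $h_0+h_1=0$ versus off-diagonal split, the gcd counts over $r$ and $s$) on which Lemmas~\ref{lem:smooth-doublesum}, \ref{lem:sum-gcd-h}, \ref{lem:sum-gcd-hh} and Proposition~\ref{prop:Fourier-middle} operate. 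With two differencings in $n$ one obtains fourth-order correlations of $f(m\,\cdot)$ along shifted values of $n$, which do not factor through $\gh$ in this way; to exploit them one would need pointwise (rather than restricted $L^2$) control of the discrete Fourier transform of $f$, which is exactly what the Rudin--Shapiro case lacks and what this formalism is designed to avoid. So, as written, the plan's central reduction would not go through.

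A secondary omission: after replacing the tile indicator $\psi_\sigma$ by its smoothed version $A_{\sigma,\tau}$ you never account for the error terms carrying $B_{\sigma,\tau}$ (the quantities $E_4$, $E_4'$ in the paper), nor for the splitting of the frequencies $h$ into large, middle and small ranges in the diagonal contribution; these consume the carry constant $\eta_2$ and the distortion parameters $\rh,\rb$ (via the unit-normalization and Gelfand bounds), and they are where the conditions $\rh\tau'\leq\tfrac12\rb\mu$ and the choice $\rho_2\asymp\rb\mu/(20\rh\rb^{-1})$ --- hence the term $\eta_1\rb$ in $\delta$ --- actually arise. Your attribution of $\eta_1\rb$ to ``the Gelfand/skewing regime'' is correct in spirit, but without the second differencing in $m$ set up correctly none of this part of the analysis can be carried out as you describe.
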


We let~$V_1$ be given as in Lemma~\ref{lem:majo-fourier-trunc}, and as before define~$V, {\hat V} : K \to\C$ by~$V = V_1 \circ \iota^{-1}$ and~${\hat V} = {\hat V}_1 \circ (\iota^\vee)^{-1}$, so that for any~$\lambda\in\N$, we have~$\1_{n\in\cN_\lambda} \leq V(n/q^\lambda)$, and~${\hat V}(\xi) = 0$ for~$\|\xi\|>1$.

\subsection{Preparatory lemma}

As in~\cite{MR}, we will now use the carry property~\eqref{eq:carry-prop} in the context of a multiplicative convolution~$mn = u_1+q^\kappa v$, and so we wish to count the pairs~$(m,n)$ yielding exceptional values of~$v$. The following lemma is the analogue of Lemmas~7 to 9 of~\cite{MR}.
\begin{lemma}\label{lem:carryprop-plus}\ 
  \begin{enumerate}
    \item\label{item:carryprop-1} For any finite set~$\B\subset\Ok$ and~$\mu, \mu', \nu\in\N$ with~$\mu' \geq \mu$, we have
    $$ \card\Big\{(m, n)\in\cN_\mu\times\cN_\nu, \exists u\in\cN_{\mu'}, v\in\B, mn = u + q^{\mu'}v \Big\} \ll \mu^d Q^{\mu'} \card\B. $$
    \item\label{item:carryprop-2} For~$\mu, \nu, \rho \in\N$ with~$\rho \leq 2\nu$, we have
    \begin{align*}
      {}& \card\Big\{(m, n)\in\cN_\mu\times\cN_\nu, \exists k \in \cN_{\mu+\rho}, f(mn+k)\bar{f(mn)} \neq f_{\mu+2\rho}(mn+k)\bar{f_{\mu+2\rho}(mn)} \Big\} \\
      {}& \hspace{25em} \ll \mu^d Q^{\mu+\nu-\eta_1\rho}.
    \end{align*}
    \item\label{item:carryprop-3} Let~$\mu, \nu, \mu_0, \mu_1, \mu_2 \in\N$, and assume that~$\mu_0 \leq \mu_1 \leq \mu \leq \mu_2$. For all~$a,b,c\in\Ok$, the number~$\cE(a,b,c)$ of pairs~$(m, n)\in\cN_\mu\times\cN_\nu$ such that
    \begin{align*}
      {}& f_{\mu_2}(mn+am+bn+c)\bar{f_{\mu_2}(q^{\mu_0}r_{\mu_0,\mu_2}(mn+am+bn+c))} \\
      {}& \qquad \neq f_{\mu_1}(mn+am+bn+c)\bar{f_{\mu_1}(q^{\mu_0}r_{\mu_0,\mu_2}(mn+am+bn+c))}
    \end{align*}
    satisfies
    $$ \cE(a,b,c) \ll \mu_2^{O_q(1)} Q^{\mu+\nu - \eta_1(\mu_1 - \mu_0)}. $$
  \end{enumerate}
\end{lemma}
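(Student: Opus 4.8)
The three statements are all instances of the same principle: a carry-type condition on $mn$ (or a shifted version) forces $mn$ to lie in a rare set, and we need to count pairs $(m,n)\in\cN_\mu\times\cN_\nu$ whose product hits that set. The unifying tool is a \emph{multiplicative} version of the carry-counting bound of Lemma~\ref{lem:carry}, obtained by pulling back a bound on the number of bad values $w = mn$ through the factorisation map $(m,n)\mapsto mn$. The key auxiliary estimate is: for any set $\mathcal{W}\subset\cN_{\mu+\nu+O(1)}$, the number of pairs $(m,n)\in\cN_\mu\times\cN_\nu$ with $mn\in\mathcal{W}$ is $\ll \mu^{O_q(1)} (\card\mathcal{W})^{1/2} Q^{(\mu+\nu)/2}$ or, in the sharper form actually needed here, $\ll \mu^d \cdot (\text{number of }w\in\mathcal{W})\cdot Q^{-\nu}\cdot Q^{\nu}$ adjusted by a divisor bound. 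Concretely, since $\Ok$ is a UFD, for each $w$ with $\|w\|\ll Q^{\mu+\nu}$ the number of factorisations $w=mn$ with $m\in\cN_\mu$ is controlled by the number of divisors of $w$ lying in $\cN_\mu$, and by Lemma~\ref{lem:count-units} together with the standard divisor bound in $\Ok$ this is $\ll \mu^{O_q(1)}$ times the ideal-divisor count; crucially, fixing $m$ then fixes $n = w/m$, so summing over a set of $w$ of size $N$ gives $\ll \mu^{O_q(1)} N$ pairs after absorbing the divisor bound (this is how the clean exponent $Q^{\mu+\nu-\eta_1\rho}$, rather than its square root, appears).

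For part~\eqref{item:carryprop-1}, the set of admissible $w=mn$ is $w\in\bigcup_{v\in\B}(q^{\mu'}v+\cN_{\mu'})$, which has size $\ll Q^{\mu'}\card\B$; there is no true carry input here, only a counting of how many $(m,n)$ give each $w$, and the divisor bound gives the factor $\mu^d$. For part~\eqref{item:carryprop-2}, I would first invoke the Carry property of $f$ (Definition, eq.~\eqref{eq:carry-prop}) in the form: the set of $w\in\cN_{\mu+\nu+O(\Lambda)}$ for which there exists $k\in\cN_{\mu+\rho}$ with $f(w+k)\bar{f(w)}\neq f_{\mu+2\rho}(w+k)\bar{f_{\mu+2\rho}(w)}$ has cardinality $\ll Q^{\mu+\nu-\eta_1\rho}$ — this is exactly~\eqref{eq:carry-prop} with suitable choice of the parameters $(\kappa,\lambda,\rho)$ there (take $\kappa = \mu$, split $w = u_1 + q^\mu v$, note $k$ ranges over $\cN_{\mu+\rho}$, which lives within $\cN_\kappa + q^\kappa\cN_\rho$-type ranges after an $O(\Lambda)$ adjustment). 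Then apply the multiplicative pullback above with $N \ll Q^{\mu+\nu-\eta_1\rho}$ and collect the $\mu^d$ from divisor-counting.

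Part~\eqref{item:carryprop-3} is the delicate one, and I expect it to be the main obstacle. Here the ``word'' being tested is $mn + am + bn + c = (m+b)(n+a) + (c-ab)$, so writing $m' = m+b$, $n' = n+a$ we reduce to the product $m'n'$ plus a fixed shift; the ranges $m'\in b+\cN_\mu$, $n'\in a+\cN_\nu$ are translates of $\cN_\mu,\cN_\nu$ by bounded-length (after reduction mod units / using~\eqref{eq:def-Lambda}) elements, so the divisor-counting pullback still applies up to enlarging $\mu$ by $O(\Lambda)$. The carry event now compares $f_{\mu_2}(W)\bar{f_{\mu_2}(q^{\mu_0}r_{\mu_0,\mu_2}(W))}$ with its $f_{\mu_1}$ analogue for $W = m'n' + (c-ab)$; this is the statement that truncating $f$ at level $\mu_1$ versus $\mu_2$ changes the product once we have stripped the bottom $\mu_0$ digits. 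One shows, exactly as in the derivation of~\eqref{eq:carry-prop} from carry propagation (cf.\ the proof of Lemma~\ref{lem:carry} and the use of~\eqref{eq:lemme10-bound-psi-A} in Lemma~\ref{lem:FourierProp-mid}), that this forces a carry to propagate across the digit window $[\mu_0,\mu_1]$ of $W$; by Lemma~\ref{lem:carry} (or directly by~\eqref{eq:carry-prop}) the number of such $W$ in $\cN_{\mu_2}$ is $\ll Q^{\mu_2 - \eta_1(\mu_1-\mu_0)}$. But we need the count of \emph{pairs}, and $W$ ranges in $\cN_{\mu_2}$ while $m'n'$ ranges only in $\cN_{\mu+\nu+O(\Lambda)}$; the point is that $W = m'n' + (\text{fixed})$ determines $m'n'$, so bad $W$'s with $\|W\|\ll Q^{\mu+\nu}$ number $\ll Q^{\mu+\nu-\eta_1(\mu_1-\mu_0)}$ (the digits above level $\mu+\nu+O(\Lambda)$ are forced), and the $\mu_2^{O_q(1)}$ factor absorbs both the divisor bound and the polynomial loss $\mu^{d-1}$ from Lemma~\ref{lem:carry}. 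The technical care required is in (a) checking that shifting $m,n$ by $a,b$ does not blow up the length beyond $\mu+\nu+O(\Lambda)$ uniformly in $a,b,c$ — here one uses that only $a,b$ of length $O(\mu)$ can contribute nontrivially, since otherwise $am$, $bn$ dominate and the digit structure is controlled by~\eqref{eq:def-Lambda} — and (b) matching the exact shape of the truncation-comparison event to the input~\eqref{eq:carry-prop}, which is a bookkeeping exercise with the indices $(\mu_0,\mu_1,\mu_2)$ playing the roles of $(\kappa,\kappa+\rho,\,\cdot\,)$ there.
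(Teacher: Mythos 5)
There is a genuine gap, and it sits at the very foundation of your plan: the ``multiplicative pullback'' estimate you rely on --- that for an arbitrary set $\cW$ the number of pairs $(m,n)\in\cN_\mu\times\cN_\nu$ with $mn\in\cW$ is $\ll \mu^{O_q(1)}\card\cW$, because ``the number of factorisations $w=mn$ with $m\in\cN_\mu$ is $\ll\mu^{O_q(1)}$ after absorbing the divisor bound'' --- is false. The divisor function in $\Ok$ is not polynomially bounded in terms of $\log N(w)\asymp\mu+\nu$: its maximal order is of size $\exp(c(\mu+\nu)/\log(\mu+\nu))$, and the exceptional sets $\cW$ produced by the carry property in parts (2) and (3) are sparse and unstructured, so no averaging over $w$ can rescue a pointwise divisor bound. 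A Cauchy--Schwarz detour, as you note yourself, loses a square root and destroys the exponent. This is precisely why the paper never fixes $w=mn$ and counts factorisations. In part (1) the order of summation is the opposite of yours: one fixes $m\in\cN_\mu$ and $v\in\B$ and counts $u$ (equivalently $n$) in the arithmetic progression $u\equiv -q^{\mu'}v\mod m$ inside a box, via Poisson summation; the polynomial factor $\mu^d$ then comes from $\sum_{m\in\cN_\mu}N(m)^{-1}\ll\mu^d$ (Lemma~\ref{lem:addchar-count} with $\fm=(1)$), not from any divisor bound. Part (2) is then deduced from part (1) exactly as in Mauduit--Rivat: the carry property \eqref{eq:carry-prop}, applied to the digits of $mn$ above level $\mu+\rho$, places the digit tail $v$ of $mn$ in a set $\B$ of size $\ll Q^{\nu-\rho-\eta_1\rho+O(1)}$, and part (1) with $\mu'=\mu+\rho$ counts the pairs.

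Part (3) is also not reachable by your route, for an additional reason: the constraint there concerns only the middle digit window $[\mu_0,\mu_2)$ of $W=mn+am+bn+c$ (the event depends only on $W\bmod q^{\mu_2}$), with the top digits free and with $a,b,c$ completely arbitrary; your claim that ``only $a,b$ of length $O(\mu)$ can contribute nontrivially'' is unjustified, since the low digits of $am+bn+c$ are unconstrained whatever the size of $a,b,c$. Because the top digits of $W$ are free, part (1) (which prescribes the top digits) does not apply, and the paper proceeds differently: for each bad window value $\ell\in\B$ (with $\card\B\ll Q^{\mu_2-\mu_0-\eta_1(\mu_1-\mu_0)}$ from \eqref{eq:carry-prop} with $\kappa\gets\mu_0$, $\lambda\gets\mu_2-\mu_0$, $\rho\gets\mu_1-\mu_0$), it detects the condition $r_{\mu_0,\mu_2}(W)=\ell$ by the periodised tile indicator $\psi_{\mu_2-\mu_0}$, expands it in Fourier series via Lemma~\ref{lem:Fourier-psi} (with $\tau=0$), isolates the frequency $\xi=0$, and bounds the remaining bilinear exponential sums $\sum_{m,n}V(m/q^\mu)V(n/q^\nu)\e_{\mu_2}(\xi m(n+a))$ by Lemma~\ref{lem:smooth-doublesum}, which produces the gcd factor $N(\xi\fD_K+(q^\mu))$, summed over $\xi$ by Lemma~\ref{lem:sum-gcd-h}. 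So the correct proof of (3) is genuinely harmonic-analytic (Poisson summation plus gcd sums), not a rare-set-times-divisor-count argument; to repair your proposal you would have to replace the pullback lemma by this machinery, at which point you are reproducing the paper's proof.
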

\begin{proof}
  \begin{enumerate}
    \item Following~\cite[p.2603]{MR}, the quantity we wish to bound is at most
    \begin{align*}
      {}& \sum_{m\in\cN_\mu} \sum_{v\in\B} \ssum{u\in\Ok \\ u\equiv -q^{\mu'}v \mod{m}} V\Big(\frac{u}{q^{\mu'}}\Big) \\
      = {}& Q^{\mu'}\sum_{m\in\cN_\mu} \frac1{N(m)} \sum_{v\in\B}\sum_{k\in\Ov/m} \e\Big(\tc{\frac{q^{\mu'}kv}{m}}\Big)\sum_{u\in\Ok} {\hat V}\Big(q^{\mu'}\Big(\xi + \frac{k}{m}\Big)\Big) \\
      \ll {}& (\card \B) Q^{\mu'} \sum_{m\in\cN_\mu} \frac1{N(m)} \card\Big\{ \xi\in\Ov, \|q^{\mu'}\xi/m\| \leq 1 \Big\}.
    \end{align*}
    The claimed bound then follows from the fact that the condition~$\|q^{\mu'}\xi/m\| \leq 1$ implies~$\|\xi\| \ll \|m/q^{\mu'}\| \ll 1$ (since~$\mu'\geq \mu$), and by Lemma~\ref{lem:addchar-count} with~$\fm=(1)$ (so that the condition~$(k, m)\sim\sigma$ is equivalent to~$k=0$).
    \item Using point~(\ref{item:carryprop-1}) and the carry property~\eqref{eq:carry-prop}, the argument given in~\cite{MR} can be applied with no modifications.
    \item We use the carry property~\eqref{eq:carry-prop} with~$\kappa\gets\mu_0$, $\lambda \gets \mu_2-\mu_0$, $\rho \gets \mu_1-\mu_0$. We deduce that for some set~$\B\subset\cN_{\mu_2-\mu_0}$, with~$\card\B \ll Q^{\mu_2-\mu_0 - \eta_1(\mu_1-\mu_0)}$, we have
    \begin{align*}
  \;    \cE(a, b, c) \leq {}& \sum_{\ell\in\B} \card\{(m, n)\in\cN_\mu\times\cN_\nu, r_{\mu_0,\mu_2}(mn+am+bn+c) = \ell\} \\
      \leq {}& \sum_{\ell\in\B} \sum_{m\in\Ok} V\Big(\frac{m}{q^{\mu}}\Big) \sum_{n\in\Ok} V\Big(\frac{n}{q^{\nu}}\Big) \psi_{\mu_2-\mu_0}\Big(\frac{mn+am+bn+c}{q^{\mu_2}} - \frac{\ell}{q^{\mu_2-\mu_0}}\Big).
    \end{align*}
    We apply Lemma~\ref{lem:Fourier-psi} with~$\tau=0$ and~$\lambda = \mu_2-\mu_0$, and use the triangle inequality along with the bound~\eqref{eq:fourier-bound-coeff} with~$A=d+1$, obtaining \begin{align*}
  \qquad \;    \cE(a,b,c) \ll \frac{\card \B}{Q^{\mu_2-\mu_0}} \sum_{\xi\in\Ov} \frac1{(1+\|q^{-\mu_2+\mu_0}\xi\|)^{d+1}} \sum_{n\in\Ok} V\Big(\frac{n}{q^{\nu}}\Big) \Big|\sum_{m\in\Ok} V\Big(\frac{m}{q^{\mu}}\Big) \e_{\mu_2}(\xi m(n+a)) \Big|.   
    \end{align*}

    The contribution of~$\xi=0$ is~$\ll (\card \B) Q^{\mu+\nu-\mu_2+\mu_0} \ll Q^{\mu+\nu-\eta_1(\mu_1-\mu_0)}$. To bound the remainder, we apply Lemma~\ref{lem:smooth-doublesum} with~$\ft = q^{\mu_2-\mu} \fD_K$ (this gives a slight loss, which is why we isolated~$\xi=0$), getting
    $$ \sum_{n\in\Ok} V\Big(\frac{n}{q^{\nu}}\Big) \Big|\sum_{m\in\Ok} V\Big(\frac{m}{q^{\mu}}\Big) \e_{\mu_2}(\xi m(n+a)) \Big| \ll Q^{\nu+\mu_2-\mu} N(\xi \fD_k + (q^\mu)), $$
    and so, by Lemma~\ref{lem:sum-gcd-h} with~$\ft = q^{\mu_2-\mu_0} \fD_K$,
    \begin{align*}
      \cE(a,b,c) \ll {}& Q^{\mu+\nu-\eta_1(\mu_1-\mu_0)} + Q^{\nu+\mu_2-\mu-\eta_1(\mu_1-\mu_0)} \ssum{\xi\in q^{\mu_0-\mu_2} \Ov \\ \xi\neq 0} \frac{N(\xi q^{\mu_2-\mu_0}\fD_K + (q^\mu))}{(1+\|\xi\|)^{d+1}} \\
      \ll {}& Q^{\mu+\nu-\eta_1(\mu_1-\mu_0)} + \tau(q^{\mu_2-\mu_0} Q^{\mu+\nu - \mu_0 -\eta_1(\mu_1-\mu_0)},
    \end{align*}
    whence the claimed bound.
  \end{enumerate}
\end{proof}

\subsection{Van der Corput step}

The rest of this section is devoted to the proof of Proposition~\ref{prop:type-ii}. Let~$\rho_1, \rho_2, \rho \in \N$, assume that
\begin{equation}
  \rho_2 \leq \rho_1, \quad \rho_1+\rho \leq \frac\mu2, \label{eq:cond-rho2-rho1-rho}
\end{equation}
and define
$$ \mu_0 = \mu - 2(\rho_1+\rho), \quad \mu_1 = \mu - 2\rho_1, \quad \mu_2 = \mu + 2\rho_2. $$
We recall the definition~\eqref{eq:def-Delta}, and we define further, for all~$\lambda\in\N$,
\begin{equation}
  \Delta_\lambda^\ast = \Delta_\lambda \smallsetminus \{0\}.\label{eq:def-Delta-ast}
\end{equation}

The beginning of the argument mirrors closely pp. 2610-2613 of~\cite{MR}, using the van der Corput inequality in the form of Lemma~\eqref{eq:def-Delta}, twice. The computations being the same, we restrict to mentionning the main steps: we obtain, using Lemma~\ref{lem:van-der-corput}, Cauchy--Schwarz's inequality, and \ref{lem:carryprop-plus}.(2),
\begin{align*}
  |S_{II}| \leq {}& \sum_{m\in\Ok} V\Big(\frac{m}{q^\mu}\Big) \Big| \sum_{n\in\cN_\nu} \beta_n f(mn) \e(\psi(mn)) \Big| \\
  \ll {}& \|\alpha\|_2 \|\beta\|_2 Q^{\mu/2+\nu/2-\rho_2/2} \\
  {}& + \|\alpha\|_2 Q^{\nu/2} \Big(Q^{-\rho_2} \ssum{r\in\Delta_{\rho_2}^\ast} \sum_{n\in\cN_\nu} \abs{\beta_{n+r}\beta_n} \Big| \sum_{m\in\Ok} V\Big(\frac{m}{q^\mu}\Big) f(mn+mr)\bar{f(mn)} \e(\psi(mr)) \Big| \Big)^{1/2} \\
  \ll {}&  \mu^{d/4}\|\alpha\|_2 \|\beta\|_4 Q^{\mu/2+3\nu/4-\eta_1\rho_2/4} \\
  {}& + \|\alpha\|_2 Q^{\nu/2} \Big(Q^{-\rho_2} \ssum{r\in\Delta_{\rho_2}^\ast} \sum_{n\in\cN_\nu} \abs{\beta_{n+r}\beta_n}\Big| \sum_{m\in\Ok} V\Big(\frac{m}{q^\mu}\Big) f_{\mu_2}(mn+mr)\bar{f_{\mu_2}(mn)} \e(\psi(mr)) \Big| \Big)^{1/2} \\
  \ll {}&  \mu^{d/4}\|\alpha\|_2 \|\beta\|_4 Q^{\mu/2+3\nu/4-\eta_1\rho_2/4} + \|\alpha\|_2 \|\beta\|_4 Q^{\mu/4+\nu/2} \Big(Q^{-\rho_2-2\rho_1} \ssum{r\in\Delta_{\rho_2}^\ast \\ s\in\Delta_{2\rho_1}^\ast} \abs{S_{II,1}(r,s)} \Big)^{1/4}, \numberthis\label{eq:est-S}
\end{align*}
where
\begin{align*}
  S_{II,1}(r,s) = {}& \sum_{n, m \in \Ok} V\Big(\frac{n}{q^\nu}\Big)V\Big(\frac{m+q^{\mu_1}s}{q^\mu}\Big)V\Big(\frac{m}{q^\mu}\Big) f_{\mu_2}((m+q^{\mu_1}s)(n+r))f_{\mu_2}(mn) \times \\
  {}& \hspace{16em} \times \bar{f_{\mu_2}((m+q^{\mu_1}s)n)f_{\mu_2}(m(n+r))} \\
  = {}& \sum_{n, m \in \Ok} V\Big(\frac{n}{q^\nu}\Big)V_s\Big(\frac{m}{q^\mu}\Big) f_{\mu_1,\mu_2}((m+q^{\mu_1}s)(n+r))f_{\mu_1,\mu_2}(mn) \times \\
  {}& \hspace{15em} \times \bar{f_{\mu_1,\mu_2}((m+q^{\mu_1}s)n)f_{\mu_1,\mu_2}(m(n+r))}.
\end{align*}
Here we let~$V_s(x) = V(x+sq^{\mu_1}) V(x)$, and~$f_{\mu_1, \mu_2} = f_{\mu_2} \bar{f_{\mu_1}}$.
The part (\ref{item:carryprop-3}) of Lemma~\ref{lem:carryprop-plus} allows to replace, in~$S_{II,1}(r,s)$, each term~$f_{\mu_1,\mu_2}(u)$ by~$g(u) = f(q^{\mu_0} r_{\mu_0, \mu_2}(u))$. We deduce
\begin{equation}
  S_{II, 1}(r, s) = S_{II,2}(r, s) + O(\mu_2^{O(1)} Q^{\mu+\nu-2\eta_1\rho}),\label{eq:est-S1}
\end{equation}
where, abbreviating~$u_0 = r_{\mu_0, \mu_2}(mn)$, ~$u_1 = r_{\mu_0, \mu_2}(mn+mr)$,
$$ S_{II,2}(r, s) = \sum_{m\in\Ok} V_s\Big(\frac{m}{q^\mu}\Big) \sum_{n\in\Ok} V\Big(\frac{n}{q^\nu}\Big) g(u_1 + q^{\mu_1-\mu_0}sn + q^{\mu_1-\mu_0}sr) \bar{g}(u_1)\bar{g}(u_0+q^{\mu_1-\mu_0}sn)g(u_0) $$
Let~$\sigma = \mu_2 - \mu_0$. The definition of~$u_0$ and~$u_1$ is inserted as
\begin{align*}
  S_{II,2}(r,s) = \sum_{m\in\Ok}\sum_{n\in\Ok} {}&\sum_{u_0, u_1 \in \Ok / q^\sigma} V_s\Big(\frac m{q^\mu}\Big)V\Big(\frac n{q^\nu}\Big) \psi_\sigma\Big(\frac{mn}{q^{\mu_2}} - \frac{u_0}{q^\sigma}\Big) \psi_\sigma\Big(\frac{mn+mr}{q^{\mu_2}} - \frac{u_1}{q^\sigma}\Big) \times \\
  {}& \times g(u_1 + q^{\mu_1-\mu_0}sn + q^{\mu_1-\mu_0}sr)\bar{g}(u_1)\bar{g}(u_0+q^{\mu_1-\mu_0}sn)g(u_0).
\end{align*}
Let~$\tau\in\N$ be a parameter. We may proceed as in Lemma~2 of~\cite{MR} to deduce
\begin{equation}
  |S_{II,2}(r, s)| \leq |S_4(r,s)| + E_4(r,0) + E_4(0,r) + E_4'(r),\label{eq:UB-S2}
\end{equation}
where
\begin{align*}
  S_4(r,s) = \sum_{m\in\Ok}\sum_{n\in\Ok} {}& \sum_{u_0, u_1\in\Ok/q^\sigma} V_s\Big(\frac m{q^\mu}\Big)V\Big(\frac n{q^\nu}\Big) A_{\sigma,\tau}\Big(\frac{mn}{q^{\mu_2}} - \frac{u_0}{q^\sigma}\Big) A_{\sigma,\tau}\Big(\frac{mn+mr}{q^{\mu_2}} - \frac{u_1}{q^\sigma}\Big) \times \\
  {}& \times g(u_1 + q^{\mu_1-\mu_0}sn + q^{\mu_1-\mu_0}sr)\bar{g}(u_1)\bar{g}(u_0+q^{\mu_1-\mu_0}sn)g(u_0),
\end{align*}
$$ E_4(r, r') = \sum_{m\in\Ok}\sum_{n\in\Ok} \sum_{u_0\in\Ok/q^\sigma} V_s\Big(\frac m{q^\mu}\Big)V\Big(\frac n{q^\nu}\Big) B_{\sigma, \tau}\Big(\frac{mn+mr}{q^{\mu_2}} - \frac{u_0}{q^\sigma}\Big) \sum_{u_1\in\Ok/q^\sigma} \psi_\sigma\Big(\frac{mn+mr'}{q^{\mu_2}} - \frac{u_1}{q^\sigma}\Big), $$
$$ E_4'(r) = \sum_{m\in\Ok}\sum_{n\in\Ok} \sum_{u_0\in\Ok/q^\sigma} V_s\Big(\frac m{q^\mu}\Big)V\Big(\frac n{q^\nu}\Big) B_{\sigma, \tau}\Big(\frac{mn}{q^{\mu_2}} - \frac{u_0}{q^\sigma}\Big) \sum_{u_1\in\Ok/q^\sigma} B_{\sigma,\tau} \Big(\frac{mn+mr}{q^{\mu_2}} - \frac{u_1}{q^\sigma}\Big). $$
At this point, we are in a situation analogous to eq. (64) of~\cite{MR}.

\subsection{Bound on~$E_4(r, r')$}

In~$E_4(r, r')$, the~$u_1$-sum evaluates to~$1$. Therefore,
$$ E_4(r, r') = \sum_{m\in\Ok} \sum_{n\in\Ok} V_s\Big(\frac m{q^\mu}\Big)V\Big(\frac n{q^\nu}\Big) \sum_{u_0\in\Ok/q^\sigma} B_{\sigma, \tau}\Big(\frac{mn+mr}{q^{\mu_2}} - \frac{u}{q^\sigma}\Big). $$
By Lemme~\ref{lem:Fourier-psi},
\begin{align*}
  E_4(r, r') = {}& \sum_{h\in\Ov} b_{\sigma, \tau}(h) \sum_{m\in\Ok} \sum_{n\in\Ok} V_s\Big(\frac m{q^\mu}\Big)V\Big(\frac n{q^\nu}\Big)\sum_{u_0\in\Ok/q^\sigma} \e_{\mu_2}(h(mn+mr))\e_{\sigma}(-hu_0)  \\
  = {}&  Q^\sigma \sum_{h\in\Ov} b_{\sigma, \tau}(h q^\sigma) \sum_{m\in\Ok} \sum_{n\in\Ok} V_s\Big(\frac m{q^\mu}\Big)V\Big(\frac n{q^\nu}\Big) \e_{\mu_0}(h(mn+mr))
\end{align*}
by orthogonality. We apply Lemma~\ref{lem:smooth-doublesum} with~$\ft = \fD_K$, using the fact that~$V_s \ll V$, getting
\begin{align*}
  E_4(r, r') \ll {}& Q^{\mu+\nu+\sigma-\mu_0} \sum_{h\in\Ov} |b_{\sigma,\tau}(hq^\sigma)| N(h\fD_K + (q^{\mu_0})) \\
  \ll{}& Q^{\mu+\nu-\mu_0-\eta_2\tau} \sum_{h\in\Ov} \frac{N(h\fD_K + (q^{\mu_0}))}{(1+\|h/q^\tau\|)^{d+1}}.
\end{align*}
Here we may apply Lemma~\ref{lem:sum-gcd-h} after changing~$h$ to~$q^\tau h$, with~$\ft = q^\tau \fD_K$. Along with partial summation, we obtain
\begin{equation}\label{eq:UB-E4}
  E_4(r, r') \ll \mu^{O(1)} Q^{\mu+\nu} \big\{ Q^{-\eta_2 \tau} + Q^{(1-\eta_2)\tau + 2(\rho+\rho_1) - \mu}\big\}.
\end{equation}

\subsection{Bound on~$E_4'(r)$}

Similarly as before, we use Lemma~\ref{lem:Fourier-psi} to expand~$B_{\sigma, \tau}$, and we execute the~$u_j$-sums, which selects frequencies which are multiples of~$q^\sigma$. We get
$$ E'_4(r) = Q^{2\sigma} \sum_{h_0, h_1 \in \Ov} b_{\sigma,\tau}(h_0 q^\sigma) b_{\sigma,\tau}(h_1 q^\sigma) \sum_{m\in\Ok} \sum_{n\in\Ok} V_s\Big(\frac m{q^\mu}\Big) V\Big(\frac n{q^\nu}\Big) \e_{\mu_0}(mn(h_0+h_1) + mrh_1). $$
The contribution of the diagonal contribution~$h_0+h_1 = 0$ is bounded by
$$ \ll Q^{2\sigma+\mu+\nu} \sum_{h\in\Ov} |b_{\sigma,\tau}(h q^\sigma)|^2 \ll Q^{\mu+\nu-\eta_2\tau}. $$
Therefore, using again~$V_s \ll V$, it suffices to obtain a non-trivial bound for
$$ T_4 := Q^{2\sigma} \ssum{h_0, h_1 \in \Ov \\ h_0 + h_1 \neq 0} |b_{\sigma,\tau}(h_0 q^\sigma) b_{\sigma,\tau}(h_1 q^\sigma)| \sum_{m\in\Ok} V\Big(\frac m{q^\mu}\Big) \Big| \sum_{n\in\Ok} V\Big(\frac n{q^\nu}\Big) \e_{\mu_0}(mn(h_0+h_1))\Big|. $$
The~$(m, n)$-sums are bounded using Lemma~\ref{lem:smooth-doublesum} with~$\ft = \fD_K$, which yields
\begin{align*}
  T_4 \ll {}& Q^{\mu+\nu+2\sigma - \mu_0} \ssum{h_0, h_1 \in \Ov \\ h_0 + h_1 \neq 0} |b_{\sigma,\tau}(h_0 q^\sigma) b_{\sigma,\tau}(h_1 q^\sigma)| N((h_0+h_1)\fD_K + (q^{\mu_0})) \\
  \ll {}& Q^{\mu+\nu - \mu_0} \ssum{h_0, h_1 \in \Ov \\ h_0 + h_1 \neq 0} \frac{N((h_0+h_1)\fD_K + (q^{\mu_0}))}{(1+\|h_0/q^\tau\|)^{2d+1}(1+\|h_1/q^\tau\|)^{d+1}} \\
  \ll {}& \mu^{O(1)} Q^{\mu+\nu + 2\tau - \mu_0}
\end{align*}
by Lemma~\ref{lem:sum-gcd-hh} and partial summation. We conclude that
\begin{equation}
  E'_4(r) \ll Q^{\mu + \nu}\big\{ Q^{-\eta_2\tau} + \mu^{O(1)} Q^{2\tau-\mu_0}\big\}.\label{eq:UB-E4'}
\end{equation}

\subsection{Bound on~$S_4$}

In~$S_4(r, s)$, we expand~$A_{\sigma,\tau}$ in Fourier series, and we sort according to the values of~$u_3 = u_1 + q^{\mu_1-\mu_0}s(n+r)\mod{q^\sigma}$ and~$u_2 = u_0 + q^{\mu_1-\mu_0}sn\mod{q^\sigma}$. We get
\begin{equation}\label{eq:S4-fourier}
  S_4(r, s) = Q^{-2\sigma} \ssum{{\bm h} = (h_0, h_1, h_2, h_3) \\ h_0, h_1\in \Ov \\ h_2, h_3 \in \Ov/q^\sigma} a_{\sigma, \tau}(h_0) a_{\sigma,\tau}(h_1) \e_{\mu_2-\mu_1}(h_3sr) U({\bm h}) W({\bm h}),
\end{equation}
where
\begin{align*}
  {}& U({\bm h}) := \sum_{m, n\in \Ok} V_s\Big(\frac m{q^\mu}\Big)V\Big(\frac n{q^\nu}\Big) \e_{\mu_2}(mn(h_0+h_1) + mrh_1 + q^{\mu_1}ns(h_2+h_3)), \\
  {}& W({\bm h}) := \ssum{u_0, u_1, u_2, u_3 \\ u_j \in \Ok/q^\sigma} g(u_0){\bar g}(u_1){\bar g}(u_2)g(u_3) \e_\sigma(u_0(h_2-h_0)+u_1(h_3-h_1)-u_2h_2 - u_3h_3).
\end{align*}

With the notation
\begin{equation}
  \gh(h) := Q^{-\sigma} \sum_{u\in\Ok/q^\sigma} g(u)\e_\sigma(-uh),\label{eq:def-gh}
\end{equation}
we have~$W({\bm h}) = Q^{4\sigma} \gh(h_0-h_2){\bar \gh}(h_3-h_1) {\bar \gh}(-h_2) \gh(h_3)$.

\subsubsection{Off-diagonal terms}

First we consider the contribution~$S_4''(r, s)$ to the sum~\eqref{eq:S4-fourier} of those indices which satisfy~$h_0 + h_1 \neq 0$. By Lemma~\ref{lem:smooth-doublesum} with~$q\gets q^\mu$, $\alpha\gets q^{-2\rho}(h_0+h_1)$ and~$\ft\gets (q^{2\rho_2})$, we obtain
$$ U({\bm h}) \ll Q^{\nu+2\rho_2} N((h_0+h_1)\fD_K + (q^\mu)). $$
On the other hand, arguing as in p.~2621 of~\cite{MR} by Cauchy-Schwarz and Parseval's identity, for all~$h_0, h_1 \in \Ov$ we have
$$ \sum_{h_2, h_3 \in \Ov/q^\sigma} |W({\bm h})| \leq Q^{4\sigma}. $$
Therefore, we obtain
\begin{align*}
  S_4''(r, s) \ll {}& Q^{\nu+2\sigma+2\rho_2} \ssum{h_0, h_1 \in \Ov \\ h_0+h_1 \neq 0} |a_{\sigma,\tau}(h_0)a_{\sigma,\tau}(h_1)| N((h_0+h_1)\fD_K + (q^\mu)) \\
  \ll {}& Q^{\nu+2\rho_2} \ssum{h_0, h_1 \in \Ov \\ h_0+h_1 \neq 0} \frac{N((h_0+h_1)\fD_K + (q^\mu))}{((1+\|\frac{h_0}{q^{\sigma+\tau}}\|) (1+\|\frac{h_1}{q^{\sigma+\tau}}\|))^{2d+1}} \\
  = {}& Q^{\nu+2\rho_2} \ssum{h_0, h_1 \in (q^{\sigma+\tau} \fD_K)^{-1} \\ h_0+h_1 \neq 0} \frac{N((h_0+h_1)q^{\sigma+\tau} \fD_K + (q^\mu))}{((1+\|h_0\|) (1+\|h_1\|))^{2d+1}} \\
  \ll {}& \mu^{O(1)} Q^{\nu+2\rho_2+2(\sigma+\tau)-\mu} \numberthis\label{eq:UB-S4''}
\end{align*}
by Lemma~\ref{lem:smooth-doublesum} with~$\ft = q^{\sigma+\tau} \fD_K$ and partial summation.

\subsubsection{Diagonal terms}

Note that~$A_{\sigma,\tau}(\xi) \in \R$, so that~$a_{\sigma,\tau}(-\xi) = \bar{a_{\sigma,\tau}(\xi)}$. Let~$S_4'(r,s)$ denote the contribution to~$S_4(r,s)$ coming from indices~$h_0+h_1=0$, so that
\begin{equation}
  S_4(r,s) = S_4'(r,s) + S_4''(r,s).\label{eq:S4-decomp}
\end{equation}
We define
$$ U_1(h;r,s) := \sum_{m\in\Ok} V_s\Big(\frac{m}{q^\mu}\Big) \e_{\mu_2}(-mrh), \quad U_2(h') := \sum_{n\in\Ok} V\Big(\frac{n}{q^\nu}\Big) \e_{\mu_2-\mu_1}(nsh'), $$
so that
\begin{align*}
  S_4'(r,s) =  Q^{2\sigma} \ssum{h\in\Ov \\ h_2,h_3 \in \Ov/q^\sigma} |a_{\sigma,\tau}(h)|^2 \e_{\mu_2-\mu_1}(h_3sr) {}& U_1(h;r,s)U_2(h_2+h_3) \times \\
  {}& \times \gh(h-h_2){\bar \gh}(h_3+h) {\bar \gh}(-h_2) \gh(h_3)
\end{align*}
and consequently
\begin{align*}
  |S_4'(r,s)| \leq Q^{2\sigma} \ssum{h\in\Ov \\ h' \in \Ov/q^\sigma} |a_{\sigma,\tau}(h)|^2 {}& |U_1(h;r,s)| |U_2(h')| \times \\
  {}& \times \sum_{h_3\in\Ov/q^\sigma} |\gh(h-h'+h_3)\gh(h_3+h)\gh(-h'+h_3) \gh(h_3)|.
\end{align*}
Note that by Cauchy--Schwarz,
$$ \sum_{h_3\in\Ov/q^\sigma} |\gh(h-h'+h_3)\gh(h_3+h)\gh(-h'+h_3) \gh(h_3)| \leq W(h), $$
where 
$$ W(h) = \sum_{h_3\in\Ov / q^\sigma} |\gh(h_3+h)\gh(h_3)|^2. $$
We note for further reference that, using~$\abs{\gh}\leq 1$ and Parseval's identity,
\begin{equation}\label{eq:trivial-W}
  \abs{W(h)} \leq 1.
\end{equation}
Assume
\begin{equation}
  \nu\geq \mu_2 - \mu_1 = 2(\rho_1 + \rho_2).\label{eq:cond-nu}
\end{equation}
We have
\begin{align*}
  \sum_{h'\in\Ov/q^\sigma} |U_2(h')| {}& \leq Q^\nu \sum_{h'\in\Ov/q^\sigma} \sum_{\xi\in\Ov} |{\hat V}(q^{\nu-2(\rho_1+\rho_2)}(sh'+q^{2(\rho_1+\rho_2)}\xi))| \\
  {}& = Q^{\nu+2\rho'} \sum_{\xi\in\Ov} |{\hat V}(q^{\nu-2(\rho_1+\rho_2)}\xi)| \sum_{h'\in\Ov/q^{2(\rho_1+\rho_2)}} \1(sh' - \xi \in q^{2(\rho_1+\rho_2)}\Ov).
\end{align*}
By Lemma~\ref{lem:majo-fourier-trunc}, the only~$\xi$ contributing to the sum is~$\xi=0$. We bound the~$h'$-sum as in Lemma~\ref{lem:solution-gcd}: the condition~$sh' \in q^{2(\rho_1+\rho_2)}\Ov$ means~$h'\in q^{2(\rho_1+\rho_2)}\fd^{-1}\Ov$, where~$\fd = (s) + (q^{2(\rho_1+\rho_2)})$. Since~$|q^{2(\rho_1+\rho_2)}\fd^{-1}\Ov / (q^{2(\rho_1+\rho_2)})| = |\Ok / \fd \fD_K| = N(\fd \fD_K) \ll N(\fd)$, we find
$$ \sum_{h'\in\Ov/q^\sigma} |U_2(h')| \leq Q^{\nu+2\rho'} N((s) + (q^{2(\rho_1+\rho_2)})), $$
and so
$$ S_4'(r,s) \ll Q^{\nu+2\sigma+2\rho'} N((s)+(q^{2(\rho_1+\rho_2)})) \sum_{h\in\Ov} |a_{\sigma,\tau}(h)|^2 |U_1(h;r,s)| W(h), $$
where

We now execute the sum over~$s\in\Delta_{2\rho_1}^\ast$. Define
$$ U_1(h;r) = \sup_{s\in\Delta_{2\rho_1}^\ast}\abs{U_1(h;r,s)}. $$
Then, with~$C = 2R_\cF^+$ (where we recall the definition~\eqref{eq:F-subsup}), we have
\begin{align*}
  \frac1{Q^{2\rho_1}} \ssum{s\in \Delta_{2\rho_1}^\ast} N((s)+(q^{2(\rho_1+\rho_2)}))
  \leq {}& \frac1{Q^{2\rho_1}} \ssum{\fd \mid (q^{2(\rho_1+\rho_2)})} N(\fd) \card\Big\{s\in q^{-2\rho_1}\fd, 0<\|s\|\leq C \Big\} \\
  \ll  {}& \tau(q^{2(\rho_1+\rho_2)})
\end{align*}
by Lemma~\ref{lem:lattice-count}, and the last quantity is~$O(\rho_1^{O(1)})$. We deduce
$$ \frac1{Q^{2\rho_1}} \sum_{s\in \Delta_{2\rho_1}^\ast} |S_4'(r,s)| \ll \mu^{O(1)} Q^{\nu+2\sigma+2\rho'}\sum_{h\in\Ov} |a_{\sigma,\tau}(h)|^2 U_1(h;r) W(h). $$
Define~$\tau = \rho_2(2+\rb^{-1})$, $\tau' := \tau+\sigma + \floor{\mu\ee}$ for some parameter~$\ee\in(0, 1]$ to be chosen later, and impose the condition
$$ \rh\tau' \leq \tfrac12 \rb \mu. $$

We will prove the three bounds
\begin{align}
  {}& \ssum{h\in\Ov \\ \|h/q^{\tau'}\|>1} |a_{\sigma,\tau}(h)|^2 U_1(h;r) W(h) \ll_{\ee} Q^{-10\mu}, {}&  \label{eq:typeii-lastbound-1} \\ 
  {}& \frac1{Q^{\rho_2}} \sum_{r\in\Delta_{\rho_2}^\ast} U_1(h;r)  \ll_A \mu^{O(1)} \big( Q^{\mu-A\rb \rho_2} + Q^{-10\mu} \big), {}& \text{if } \|\tfrac{h}{q^{\tau'}}\|\leq 1, \|\tfrac{h}{q^\tau}\|>1, \label{eq:typeii-lastbound-2} \\
  {}& \ssum{h\in\Ov \\ \|h/q^\tau\|\leq 1} W(h) \ll Q^{2\rho'-\eta''\gamma(\sigma-\tau)} + Q^{2\rho'-\eta_1\tau}. {}& \label{eq:typeii-lastbound-3} 
\end{align}
Along with the bounds~\eqref{eq:trivial-W} and~$\abs{a_{\sigma,\tau}(h)} \ll Q^{-\sigma}$, this will yield
\begin{equation}
  \begin{aligned}
    {}& \frac1{Q^{\rho_1+2\rho_2}} \ssum{r\in\Delta_{\rho_2}^\ast \\ s\in \Delta_{2\rho_1}^\ast} |S_4'(r,s)| \\
    {}& \hspace{2em} \ll_A \mu^{O(A)} Q^{\mu+\nu+2\rho'}\big\{ Q^{-10\mu} + Q^{\tau'-A\rb\rho_2} + Q^{2\rho'-\eta''\gamma(\sigma-\tau)} + Q^{2\rho'-\eta_1\tau}\big\}.
  \end{aligned}\label{eq:UB-avg-S4'}
\end{equation}

\subsubsection{Large~$h$}

First, for~$\|h/q^{\tau'}\|>1$, we have
$$ \|q^{-\sigma-\tau}h\| \gg \| h/q^{\tau'} \| Q^{\rb \mu \ee} \mu^{1-d} $$
by Lemma~\ref{lem:norms} and our definition~\eqref{eq:def-q-rh}. By using Lemma~\ref{lem:Fourier-psi}, we have for any~$A\geq 1$,
$$ |a_{\sigma,\tau}(h)|^2 \ll_A \frac{1}{Q^{2\sigma+A\rb\mu\ee}\|h/q^{\tau'}\|^A}. $$
We deduce, by~\eqref{eq:trivial-W},
\begin{align*}
  \ssum{h\in\Ov \\ \|h/q^{\tau'}\|>1} |a_{\sigma,\tau}(h)|^2 U_1(h;r) W(h) \ll_A {}& Q^{-2\sigma-A\rb\mu\ee} \sum_{h\in q^{-\tau'}\Ov} (1+\|h\|)^{-d-1} \\
  \ll_\ee {}& Q^{-10\mu}
\end{align*}
by assuming~$\rho\leq\mu$ and by picking~$A$ large enough in terms of~$\ee$. This proves~\eqref{eq:typeii-lastbound-1}.

\subsubsection{Middle-sized $h$}

Assume that~$\|h/q^{\tau'}\|\leq 1$ and~$\|h/q^\tau\|>1$.  For all~$r\in \Delta_{\rho_2}^\ast$, we have
$$ \Big\|\frac{rh}{q^{2\rho_2}}\Big\| \gg \|h/q^\tau\| \|q^{2\rho_2-\tau} r^{-1}\|^{-1} $$
by the triangle inequality, while
$$ \|q^{2\rho_2-\tau} r^{-1}\| \ll \|q^{\rho_2-\tau}\|  Q^{\rho_2} N(r)^{-1} \|r/q^{\rho_2}\|^{d-1} \ll \|q^{\rho_2-\tau}\| Q^{\rho_2} \ll \rho_2^{1-d}Q^{-\rb\rho_2}, $$
by Lemma~\ref{lem:norms} and since~$r\in\Delta_{\rho_2} \subset\Ok$.
We conclude that for~$h\in\cN_{\tau'}\smallsetminus\cN_\tau$, for all~$r\in \Delta_{\rho_2}^\ast$, we have
\begin{equation}\label{eq:troncation-mino-rh}
  \|q^{-2\rho_2}rh\| \gg \rho_2^{O(1)} Q^{\rb\rho_2}.
\end{equation}
On the other hand, we have
\begin{align*}
  \frac1{Q^{\rho_2}}\sum_{r\in \Delta_{\rho_2}^\ast} U_1(h;r)
  = {}& \frac1{Q^{\rho}} \sum_{r\in \Delta_{\rho_2}^\ast} \sup_{s\in\Delta_{2\rho_1}^\ast} \Big|\sum_{m\in\Ok} V_s\Big(\frac{m}{q^\mu}\Big) \e_{\mu_2}(mrh)\Big| \\
  = {}& Q^{\mu-\rho} \sum_{r\in \Delta_{\rho_2}^\ast} \sup_{s\in\Delta_{2\rho_1}^\ast}\Big|\sum_{\xi\in\Ov} {\hat V_s}\Big(q^\mu \xi + \frac{rh}{q^{2\rho_2}}\Big)\Big|.
\end{align*}
Here have~$\|\frac{rh}{q^{2\rho_2}}\| \ll \|h\| \leq Q^{\rh \tau'}$, while for~$\xi\neq 0$,~$\|q^\mu\xi\| \gg \mu^{O(1)}Q^{\rb\mu}$. Moreover, since~$V_s(x)=V(x)V(x+q^{\mu_1-\mu}s)$, the derivatives of~$V_s$ are bounded uniformly in~$s$, and so~$\abs{{\hat V_s}(x)} \ll_A (1+\|x\|)^{-A}$ for all~$x\in K$ and~$A\geq 0$. Assuming
\begin{equation}
  \rh\tau' \leq \tfrac12 \rb \mu,\label{eq:cond-taup-mu}
\end{equation}
we obtain that for~$\mu$ large enough, either~$\xi = 0$ or
$$ \Big\| q^\mu \xi + \frac{rh}{q^{2\rho_2}} \Big\| \geq  \frac{\|q^\mu\xi\|}2 \gg \mu^{O(1)}Q^{\rb\mu} \|\xi\|. $$
Summarizing the above, we conclude that for~$\|h/q^{\tau'}\|\leq 1$ and~$\|h/q^\tau\|>1$,
$$ \frac1{Q^{\rho_2}}\sum_{r\in \Delta_{\rho_2}} U_1(h;r) \ll_A Q^\mu \mu^{O(A)} \big( Q^{-A\rb \rho_2} + Q^{-A\rb \mu} \big). $$
for any fixed~$A\geq 0$. This yields~\eqref{eq:typeii-lastbound-2}.

\subsubsection{Small~$h$}

Finally, we focus on the case~$\|h/q^\tau\|\leq 1$. In this range, we use the estimate~$|a_{\gamma,\tau}(h)| \ll Q^{-\sigma}$ from Lemma~\ref{lem:Fourier-psi}, and the trivial bound~$U_1(h;r)\ll Q^\mu$. We get
$$ \ssum{h\in\Ov \\ h\in\cN_\tau} |a_{\sigma,\tau}(h)|^2 U_1(h;r) W(h) \ll Q^{\mu-2\sigma} \ssum{h\in\Ov \\ \|h/q^\tau\|\leq 1} W(h). $$
Assuming that
$$ \mu_0 \leq c(\sigma - \tau), $$
Lemma~\ref{prop:Fourier-middle} applies, and yields
$$ \ssum{h\in\Ov \\ \|h/q^\tau\|\leq 1} W(h) \ll Q^{2\rho'}\big(Q^{-\eta''\gamma(\sigma-\tau)} + Q^{-\eta_1\tau}\big) \sum_{h_3\in\Ov/q^{\rho_3}} \abs{\gh(h_3)}^2. $$
The sum over~$h_3$ evaluates to~$1$ by Parseval's identity, and we obtain~\eqref{eq:typeii-lastbound-3}.

\subsection{Optimization}

Grouping successively the bounds~\eqref{eq:UB-S4''}, \eqref{eq:S4-decomp}, \eqref{eq:UB-avg-S4'}, and~\eqref{eq:est-S}--\eqref{eq:UB-E4'} yields
$$ S_{II} = \sum_{m\in\cN_\mu} \sum_{n\in\cN_\nu} \alpha_m \beta_n f(mn) \ll_\ee \mu^{O(1)} \|\alpha\|_2 \|\beta\|_4 Q^{\mu/2+3\nu/4-\delta/4}, $$
where
$$ \delta = \min\Big\{\eta_1\rho_2, 2\eta_1\rho', \eta_2\tau, \mu-2(\tau+\rho'+\rho_1), \eta''\gamma(\sigma-\tau)-4\rho', \eta_1\tau-4\rho'\Big\}, $$
with~$\tau = (2+\rb^{-1})\rho_2$ and~$\tau'=\tau+2(\rho'+\rho_1+\rho_2)+\floor{\mu\ee}$, under the conditions:
$$ \rho_2 \leq \rho_1 \leq \mu, \quad 2(\rho'+\rho_1+\rho_2) \leq \mu, \quad \mu + c\rb^{-1}\rho_2 \leq 2(c+1)\rho' + (2c+1)\rho_1, \quad \rh\tau' \leq \tfrac12 \rb \mu. $$
Let~$K = 20\rh\rb^{-1}$, so that by hypothesis~$c \geq K$. Then with the choice
$$ \rho_1 = \frac\mu{K}+O(1), \quad \rho' = \frac{\eta''}{8}\gamma\Big(\floor{\frac\mu{K}}\Big) + O(1), \quad \rho_2 = \frac{\rb\mu}K + O(1), $$
the claimed result follows.

\section{Sums over prime elements: proof of Theorem~\ref{thm:meanvalue-prime}}\label{sec:sums-over-prime}

In this section we assume that~$\Ok$ is principal. Our goal is to use Propositions~\ref{prop:type-i} and~\ref{prop:type-ii} to estimate mean values over prime elements of~$\Ok$, and prove Theorem~\ref{thm:meanvalue-prime}.

\subsection{Combinatorial identity}

In this section we express the characteristic function of prime elements into convolutions for which Propositions~\ref{prop:type-i} and \ref{prop:type-ii} apply. The methods that have been developed to perform this step has a long history, since Vinogradov's work~\cite{Vin37}. We refer to~\cite{Ramare2013} for an account and references. In~\cite{MauduitRivat2010,MR}, this rôle is played by the combinatorial identity of Vaughan~\cite{Vaughan1980}; see~\cite{Hinz1988} for a number field analogue.

One advantage of Vaughan's identity as it is cast in~\cite{MauduitRivat2010} is the absence of divisor functions in the upper-bound. One inconvenient, as with all methods which pass through the von Mangoldt function, is the necessity to use partial summation to detect the size of~$\log N(n)$. Here we take the opportunity to proceed along a slightly different argument (see~\cite[Theorem~3.3]{DT-2018}), with the benefit that we avoid completely partial summation.

For a non-zero ideal~$\fn\subset \Ok$, let
$$ P^+(\fn) = \max_{\fp\mid\fn} N(\fp), \qquad P^-(\fn) = \min_{\fp\mid\fn} N(\fp), $$
where~$\fp$ denotes a prime ideal, and by convention~$P^+(\Ok) = 1$ and~$P^-(\Ok) = +\infty$.

\begin{lemma}\label{lem:comb-ident}
  Let~$X\geq 2$, and~$(g(\fn))_{\fn\neq 0}$ be complex numbers with~$g(\fn)=0$ if~$N(\fn)>X$. Then
  $$ \Big|\sum_{\fp} g(\fp) \Big| \ll \|g\|_\infty X^{\frac12} + \sum_{N(\fm)\leq X^{\frac14}} \Big| \sum_\fn g(\fm\fn) \Big| + (\log X) \sup_{(\alpha, \beta)} \Big| \underset{X^{\frac14} < N(\fm) \leq X^{\frac34}}{\sum_\fm\sum_\fn} \alpha_\fm \beta_\fn g(\fm\fn) \Big|, $$
  where the supremum is over all sequences~$(\alpha_\fm), (\beta_\fn)$ satisfying~$\abs{\alpha_\fm} \leq 1$ and~$\abs{\beta_\fn} \leq \tau(\fn)$.
\end{lemma}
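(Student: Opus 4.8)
The plan is to adapt the standard type--I/type--II dissection of the prime indicator (in the spirit of Vaughan's and Heath--Brown's identities) to the ideal setting, choosing the sieving level $X^{1/2}$ so that no partial summation on $\log N(\fn)$ is ever needed.

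\emph{Reduction.} We may assume $X$ large, since otherwise there are $O(1)$ prime ideals of norm $\le X$ and the bound is trivial. Write $\sum_\fp g(\fp)=\sum_{N(\fp)\le X^{1/2}}g(\fp)+\sum_{N(\fp)>X^{1/2}}g(\fp)$; the first sum is $\ll\|g\|_\infty X^{1/2}$, there being $O(X^{1/2})$ prime ideals of norm at most $X^{1/2}$. An ideal $\fn$ with $N(\fn)\le X$ and $P^-(\fn)>X^{1/2}$ has at most one prime factor (two would force $N(\fn)>X$), hence equals $\Ok$ or is a prime of norm $>X^{1/2}$, and conversely; therefore, using that $g$ is supported on $N(\fn)\le X$,
$$\sum_{N(\fp)>X^{1/2}}g(\fp)=\Big(\sum_{\fn:\,P^-(\fn)>X^{1/2}}g(\fn)\Big)-g(\Ok),\qquad |g(\Ok)|\le\|g\|_\infty.$$
By the fundamental identity of the sieve of Eratosthenes, $\1_{P^-(\fn)>X^{1/2}}=\sum_{\fd\mid\fn,\ P^+(\fd)\le X^{1/2}}\mu(\fd)$ with $\mu$ the Möbius function on ideals, so after interchanging the order of summation,
$$\sum_{\fn:\,P^-(\fn)>X^{1/2}}g(\fn)=\sum_{\fd:\,P^+(\fd)\le X^{1/2}}\mu(\fd)\sum_{\fn}g(\fd\fn),$$
the $\fd$--sum being effectively restricted to $N(\fd)\le X$.

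\emph{Splitting the sieve variable.} We dissect this sum according to the size of $N(\fd)$. When $N(\fd)\le X^{1/4}$, the triangle inequality and $|\mu(\fd)|\le1$ bound the contribution by $\sum_{N(\fm)\le X^{1/4}}|\sum_\fn g(\fm\fn)|$, the Type~I term. When $X^{1/4}<N(\fd)\le X^{3/4}$, the contribution already has the required bilinear shape, with $\alpha_\fm=\mu(\fm)\,\1_{P^+(\fm)\le X^{1/2}}\,\1_{X^{1/4}<N(\fm)\le X^{3/4}}$ (so $|\alpha_\fm|\le1$) and $\beta_\fn=1$ (so $|\beta_\fn|\le\tau(\fn)$), hence is $\le\sup_{(\alpha,\beta)}|\cdots|$. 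The remaining range $N(\fd)>X^{3/4}$ is the heart of the matter: here $\mu(\fd)\ne0$ forces $\fd$ squarefree with all prime factors of norm $\le X^{1/2}$, and we factor $\fd=\fm\fc$ by letting $\fm$ be the product of the prime factors of $\fd$, taken in non--decreasing norm order (ties broken by a fixed total order on prime ideals), up to and including the first one at which the running product exceeds $X^{1/4}$. Since each prime factor has norm at most $X^{1/2}$, this gives $X^{1/4}<N(\fm)\le X^{3/4}$, and every prime factor of $\fc$ dominates, in the fixed order, the largest prime factor of $\fm$. Decomposing the norm of that largest prime factor dyadically into $O(\log X)$ ranges $(2^j,2^{j+1}]$, one replaces in each range the ordering constraint between $\fm$ and $\fc$ by the local condition that every prime factor of $\fc$ have norm $>2^{j+1}$, incurring a discrepancy supported on those $\fd$ possessing two distinct prime factors in $(2^j,2^{j+1}]$. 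The resulting main term in range $j$ has the required form, with $\alpha_\fm$ supported on $X^{1/4}<N(\fm)\le X^{3/4}$ and $|\alpha_\fm|\le1$, and with $\beta_\fn=\sum_{\fc\mid\fn}(\cdots)$ of modulus $\le\tau(\fn)$; summing over the $O(\log X)$ ranges produces the factor $\log X$.

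\emph{Main obstacle.} The delicate point — where essentially all the work lies — is this last case: guaranteeing that the split point falls in $(X^{1/4},X^{3/4}]$ with coefficients of the correct size, and disposing of the discrepancy terms. Those discrepancy terms, indexed by $\fd$ carrying two distinct prime factors $\fp_1\prec\fp_2$ in a common dyadic block $(2^j,2^{j+1}]$, must be reorganised once more: for the ranges of $j$ with $X^{1/4}<2^{2j}$ and $2^{2j+2}\le X^{3/4}$ they fold back into a bilinear sum with $\fm$ taken to be $\fp_1\fp_2$, whose norm then automatically lies in the admissible range and whose coefficient is $O(1)$, the coprimality constraints between $\fp_1,\fp_2$ and the complementary variable being removed by an inclusion--exclusion that only generates already treated or strictly smaller pieces; for the remaining ranges of $j$, together with the prime--power contributions $\fp^k$ with $k\ge2$ (arising from squarefulness corrections) and the geometric tails of the dyadic sums, one checks a bound $\ll\|g\|_\infty X^{1/2}$. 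Finally, any bounded combinatorial factors produced in passing from ordered tuples of primes to ideals, or in counting the ways a divisor of $\fn$ may be extracted, are absorbed into the implied constant — legitimate, since a bilinear sum with $|\beta_\fn|\le C\tau(\fn)$ is $C$ times one with $|\beta_\fn|\le\tau(\fn)$.
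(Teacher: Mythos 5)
Your reduction (removing primes of norm $\le X^{1/2}$, identifying ideals with $P^-(\fn)>X^{1/2}$ and $N(\fn)\le X$ as primes or $\Ok$, M\"obius inversion, and cutting the squarefree variable at the first partial product of its ordered prime factors exceeding $X^{1/4}$) is exactly the paper's skeleton, and the direct bilinear treatment of $X^{1/4}<N(\fd)\le X^{3/4}$ is fine. The gap is in your decoupling of the ordering condition $\fp^+(\fm)\prec\fp^-(\fc)$. The dyadic main terms are genuine bilinear forms, but the discrepancy terms --- those $\fd$ whose two critical primes $\fp_1=\fp^+(\fm)$, $\fp_2=\fp^-(\fc)$ lie in the same block $(2^j,2^{j+1}]$ --- are not disposed of. For $j$ outside your middle window you assert a bound $\ll\|g\|_\infty X^{1/2}$, but these sums are not trivially small: for instance when $2^j\asymp X^{3/8}$ the number of pairs $(\fd,\fn)$ with $N(\fd\fn)\le X$ and two prime factors of $\fd$ in the block is of order $X/(\log X)^2$, so without exhibiting cancellation (i.e.\ without these pieces being routed through type~I/II bounds themselves) no bound remotely like $\|g\|_\infty X^{1/2}$ is available; the same is true for the small-$j$ tail. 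For the middle $j$, taking $\fm'=\fp_1\fp_2$ does not by itself produce coefficients of the product form $\alpha_{\fm'}\beta_{\fn'}$: the surviving constraints (all primes of $\fm/\fp_1$ precede $\fp_1$; $N(\fm/\fp_1)\le X^{1/4}<N(\fm)$; all primes of $\fc/\fp_2$ succeed $\fp_2$; $\fp_1\prec\fp_2$ inside the block) each couple the complementary variable to $\fm'$, and the ``inclusion--exclusion generating only already treated or strictly smaller pieces'' is precisely the point at issue --- it is neither made explicit nor shown to terminate with only a $\log X$ loss. A secondary flaw: having split off $X^{1/4}<N(\fd)\le X^{3/4}$, your third range carries the joint constraint $N(\fm\fc)>X^{3/4}$, which likewise cannot be placed inside $\beta_{\fn'}$; the cure is simply not to split at $X^{3/4}$ and to apply the factorization to all $N(\fd)>X^{1/4}$, since the initial segment automatically satisfies $X^{1/4}<N(\fm)\le X^{1/4}P^+(\fm)\le X^{3/4}$.

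For comparison, the paper avoids the same-block problem altogether by separating the condition $\phi(\fp^+(\fm_1))<\phi(\fp^-(\fm_2))$ analytically (Lemma~13.11 of Iwaniec--Kowalski): the indicator of the ordering is expressed through a real parameter $t$, yielding coefficients $\alpha_\fm(t)=\phi(\fp^+(\fm))^{it}\mu(\fm)$ and $\beta_\fn(t)=\sum_{\fd\mid\fn,\,\fd\neq(1),\,P^+(\fd)\le X^{1/2}}\phi(\fp^-(\fd))^{-it}\mu(\fd)$, which are bounded by $1$ and $\tau(\fn)$ by construction, at the cost of exactly the $\log X$ factor and a supremum over $t$ that is absorbed into the supremum over $(\alpha,\beta)$. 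If you want a purely combinatorial decoupling you must either iterate your block decomposition inside each block while controlling the accumulated loss, or switch to an identity (Vaughan, Heath--Brown) in which no ordering condition between the bilinear variables arises; as written, the proposal leaves the crucial step unproved.
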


\begin{proof}
  Discarding those prime ideals of norm at most~$X^{\frac12}$, the sum we wish to evaluate is
  $$ O(\|g\|_\infty X^{\frac12}) + \ssum{\fn \\ P^-(\fn)>X^{\frac12}} g(\fn). $$
  The condition is detected by Möbius inversion,
  $$ \ssum{\fn \\ P^-(\fn)>X^{\frac12}} g(\fn) = \sum_{P^+(\fm)\leq X^{\frac12}} \mu(\fm) \sum_{\fn} g(\fm\fn). $$
  The contribution of those~$\fm$ with~$N(\fm) \leq X^{\frac14}$ yields the first term. Suppose that~$N(\fm)> X^{\frac14}$ and~$\fm$ squarefree. Let~$\prec$ be any ordering of the prime ideals which respects the norm, \emph{i.e.} $N(\fp_1)<N(\fp_2)$ implies~$\fp_1 \prec \fp_2$. Enumerating the prime ideals with respect to this ordering induces a bijection~$\phi:\{\fp\text{ prime}\} \to\N_{>0}$, satisfying~$\phi(\fp) \ll N(\fp)$. Let~$\fp^+(\fn)$ (resp.~$\fp^-(\fn)$) denote the maximal (resp. minimal) prime divisor of~$\fn\neq (1)$ with respect to~$\phi$. Write
  $$ \fm = \fp_1 \dotsb \fp_k, $$
  where~$\phi(\fp_j)<\phi(\fp_{j+1})$. Then there is a minimal index~$j_\fm$ for which, letting~$\fm_1 = \fp_1 \dotsb \fp_{j_\fm}$, we have~$N(\fm_1)>X^{\frac14}$. The ideal~$\fm_1$ is characterized by the conditions
  $$ \fm_1\mid \fm, \quad X^{\frac14}<N(\fm_1) \leq X^{\frac14}P^+(\fm_1), \quad \phi(\fp^+(\fm_1)) < \phi(\fp^-(\fm/\fm_1)). $$
  We deduce
  $$ \ssum{N(\fm) > X^{\frac14} \\ P^+(\fm)\leq X^{\frac12}} \mu(\fm) \sum_{\fn} g(\fm\fn) = \underset{\substack{X^{\frac14} < N(\fm_1) \leq X^{\frac14} P^+(\fm_1) \\ P^+(\fm_2)\leq X^{\frac12} \\ \phi(\fp^+(\fm_1)) < \phi(\fp^-(\fm_2))}}{\sum\sum} \sum_{\fn} \mu(\fm_1)\mu(\fm_2) g(\fm_1\fm_2\fn). $$
  The condition~$\phi(\fp^+(\fm_1)) < \phi(\fp^-(\fm_2))$ is detected by means of Lemma 13.11 of~\cite{IwaniecKowalski2004}, so that setting
  \begin{align*}
    \alpha_\fm(t) = {}& \phi(\fp^+(\fm))^{it} \mu(\fm), \\
    \beta_\fn(t) = {}& \ssum{\fd\mid\fn, \fd\neq (1) \\ P^+(\fd)\leq X^{\frac12}} \phi(\fp^-(\fd))^{-it} \mu(\fd),
  \end{align*}
  we have
  $$ \Big|\ssum{X^{\frac14} < N(\fm) \\ P^+(\fm)\leq X^{\frac12}} \mu(\fm) \sum_{\fn} g(\fm\fn)\Big| \ll (\log X) \sup_{t\in\R} \Big| \underset{\substack{P^+(\fm)\leq X^{\frac12} \\ X^{\frac14} < N(\fm) \leq X^{\frac14} P^+(\fm)}}{\sum_\fm\sum_\fn} \alpha_\fm(t) \beta_\fn(t) g(\fm\fn) \Big|, $$
  as claimed.
\end{proof}

\subsection{Quotient by units}

When translating sums over ideals (coming from the combinatorial identities) to sums over~$\Ok$, we will use the following partition of unity, inspired from~\cite[Lemma~4.2]{Toth2000}, to account for the quotient by units.

\begin{lemma}\label{lem:units}
  There exists a smooth, homogeneous function~$\Phi_0 : \R^d\smallsetminus\{0\} \to [0, 1]$ such that, letting~$\Phi = \Phi_0\circ\iota^{-1}$, we have
  \begin{equation}
    \sum_{\ee\in\Ok^*} \Phi(n\ee) = 1 \qquad (n\in\Ok\smallsetminus\{0\}),\label{eq:partunity-units}
  \end{equation}
  and for any given~$n$, there are only finitely many non-zero terms in the sum. Moreover, for all~$x\in K^\ast$ with~$\Phi(x)\neq 0$, and all~$\pi\in G_K$, we have~$|x^\pi|\asymp N(x)^{1/d}$ with an implied constant depending only on~$K$.

  In particular, if~$\Ok$ is principal, then for any function~$g: \{\fn\neq 0\} \to \C$ of finite support and any~$\ee\in\Ok^\ast$, we have
  \begin{equation}
    \sum_{\fn\neq 0} g(\fn) = \sum_{n\in\Ok\smallsetminus\{0\}} \Phi(n\ee) g((n)).\label{eq:partunity-sumfn}
  \end{equation}
\end{lemma}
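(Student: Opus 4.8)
The plan is to build $\Phi_0$ by pulling back, via the logarithmic embedding of units, a smooth partition of unity attached to the unit lattice of Dirichlet's theorem. Write $\Sigma$ for the set of archimedean places of $K$, put $s = \card\Sigma$ and $d_v = [K_v:\R]\in\{1,2\}$, so that $\sum_{v\in\Sigma} d_v = d$; for $v\in\Sigma$ let $x^{(v)}$ denote the image in $K_v$ of $x\in K\otimes_\Q\R$. For $\mathbf x\in\R^d$ with $N(\iota(\mathbf x))\neq 0$ set $L(\mathbf x) = \big(d_v\log|(\iota\mathbf x)^{(v)}|\big)_{v\in\Sigma}\in\R^s$, so that $\sum_v L(\mathbf x)_v = \log|N(\iota\mathbf x)|$ and $L(t\mathbf x) = L(\mathbf x) + \log|t|\cdot\mathbf e$ for $t\in\R^\ast$, where $\mathbf e = (d_v)_v$. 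Let $H = \{y\in\R^s : \sum_v y_v = 0\}$, and since $\sum_v\mathbf e_v = d\neq0$ we have $\R^s = H\oplus\R\mathbf e$; let $p:\R^s\to H$ be the projection along $\R\mathbf e$, explicitly $p(y) = y - \tfrac1d\big(\sum_v y_v\big)\mathbf e$, so that $p(L(\mathbf x))$ is unchanged when $\mathbf x$ is replaced by $t\mathbf x$. By Dirichlet's unit theorem~\cite{Neukirch1999}, $\Gamma := L(\Ok^\ast)$ is a full-rank lattice in $H$ and $L:\Ok^\ast\to\Gamma$ is surjective with kernel the group $\mu_K$ of roots of unity, of order $w_K := \card\mu_K$. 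Fix $\chi_0\in C^\infty_c(H)$ with $\chi_0\geq0$ and $\sum_{\gamma\in\Gamma}\chi_0(\cdot+\gamma)>0$ on $H$ (possible since $\Gamma$ is a lattice in $H$), and set $\chi := \tfrac1{w_K}\chi_0\big/\sum_{\gamma\in\Gamma}\chi_0(\cdot+\gamma)$; then $\chi\in C^\infty_c(H)$, $\chi\geq0$, and $\sum_{\gamma\in\Gamma}\chi(y+\gamma) = \tfrac1{w_K}$ for all $y\in H$, which being a sum of nonnegative terms forces $0\leq\chi\leq\tfrac1{w_K}\leq1$. Finally define $\Phi_0(\mathbf x) := \chi\big(p(L(\mathbf x))\big)$ when $N(\iota\mathbf x)\neq0$, $\Phi_0(\mathbf x) := 0$ otherwise, and $\Phi := \Phi_0\circ\iota^{-1}$.

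Next I would verify~\eqref{eq:partunity-units}. For $n\in\Ok\setminus\{0\}$ and $\ee\in\Ok^\ast$, multiplicativity of the archimedean absolute values gives $L(\iota(n\ee)) = L(\iota n) + L(\iota\ee)$, and $L(\iota\ee)\in H$ since $|N(\ee)|=1$; hence $p(L(\iota(n\ee))) = p(L(\iota n)) + L(\iota\ee)$. As $\ee\mapsto L(\iota\ee)$ maps $\Ok^\ast$ onto $\Gamma$ with every fibre of cardinality $w_K$,
\[
\sum_{\ee\in\Ok^\ast}\Phi(n\ee) = \sum_{\ee\in\Ok^\ast}\chi\big(p(L(\iota n)) + L(\iota\ee)\big) = w_K\sum_{\gamma\in\Gamma}\chi\big(p(L(\iota n))+\gamma\big) = 1,
\]
the sum having finitely many nonzero terms because $\supp\chi$ is compact and $\Gamma$ is discrete; this also gives the asserted finiteness.

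Then come smoothness and the size estimate. On the open set $\{\mathbf x : N(\iota\mathbf x)\neq0\}$ the maps $L$, $p$, $\chi$ are smooth, so $\Phi_0$ is smooth there. If $\Phi_0(\mathbf x)\neq0$, then $p(L(\mathbf x))\in\supp\chi$ means that $d_v\log\big(|(\iota\mathbf x)^{(v)}|/|N(\iota\mathbf x)|^{1/d}\big)$ is bounded uniformly, i.e. $|(\iota\mathbf x)^{(v)}|\asymp|N(\iota\mathbf x)|^{1/d}$ for every $v\in\Sigma$, hence $|x^\pi|\asymp|N(x)|^{1/d}$ for every embedding $\pi\in G_K$ — the last assertion of the lemma. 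Now let $\mathbf x_0\in\R^d\setminus\{0\}$ with $N(\iota\mathbf x_0)=0$; some archimedean coordinate of $\iota\mathbf x_0$ vanishes while some other does not, so near $\mathbf x_0$ one coordinate $|(\iota\mathbf x)^{(v)}|$ is arbitrarily small and another is bounded below, which is incompatible with all of them being $\asymp|N(\iota\mathbf x)|^{1/d}$; thus $\Phi_0\equiv0$ in a neighbourhood of $\mathbf x_0$. Hence $\Phi_0$ is smooth on $\R^d\setminus\{0\}$, it is homogeneous of degree $0$ by the invariance of $p\circ L$ noted above, and $0\leq\Phi_0\leq1$.

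Finally, for the ``in particular'' part, assume $\Ok$ is principal and fix $\ee\in\Ok^\ast$. Since $g$ has finite support the double sum below is absolutely convergent; writing each nonzero ideal as $\fn=(n_0)$ with $n_0\in\Ok\setminus\{0\}$, whose generators are exactly $\{n_0\epsilon : \epsilon\in\Ok^\ast\}$, and using that right multiplication by the unit $\ee$ permutes $\Ok^\ast$,
\[
\sum_{n\in\Ok\setminus\{0\}}\Phi(n\ee)g((n)) = \sum_{\fn\neq0} g(\fn)\sum_{\epsilon\in\Ok^\ast}\Phi(n_0\epsilon\ee) = \sum_{\fn\neq0} g(\fn)\sum_{\epsilon\in\Ok^\ast}\Phi(n_0\epsilon) = \sum_{\fn\neq0} g(\fn)
\]
by~\eqref{eq:partunity-units}. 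The step needing the most care is the smooth extension of $\Phi_0$ by zero across the locus $N(\iota\mathbf x)=0$ (and, relatedly, pinning down the normalisation of $\chi$, i.e. the factor $w_K$ and the bound $\Phi_0\le1$); the rest is routine bookkeeping with Dirichlet's theorem.
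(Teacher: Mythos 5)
Your proof is correct and follows essentially the same route as the paper: pull back, via the logarithmic embedding from Dirichlet's unit theorem, a smooth partition of unity over the unit lattice, and deduce~\eqref{eq:partunity-sumfn} by grouping the generators of each principal ideal. The differences are implementation details only — you build the lattice partition of unity by normalizing a bump by its $\Gamma$-periodization on the trace-zero hyperplane (the paper instead takes a product of one-variable partitions of unity in fundamental-unit coordinates), and you treat more explicitly than the printed proof the roots-of-unity factor $w_K$ in the normalization and the smooth extension of $\Phi_0$ by zero across the locus $N(\iota(\mathbf{x}))=0$.
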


\begin{proof}\ 
  Let~$r$ be the rank of the free part of~$\O^\ast$, and~$\ee_1, \dotsc, \ee_r$ be any fixed basis~\cite[Theorem~I.7.3]{Neukirch1999}, so
  $$ \O^\ast = \{\omega \ee_1^{n_1} \dotsb \ee_r^{n_r}, \omega\in \Omega, n_j\in \Z\} $$
  where~$\Omega$ are the roots of unity in~$\O$. As in~\cite[p.55]{Murty2007}, we let~$\Psi:\R^d \to \R^r$ be the map defined by~$\Psi(x) = (\psi_1(x), \dotsc, \psi_r(x))$, where
  $$ \log(\abs{\iota(x)^\pi / N(\iota(x))}) = \sum_{j=1}^r \psi_j(x) \log\abs{\ee_j^\pi} $$
  for all~$\pi \in G_K$. Then for~$\lambda \in \R^\ast$ and~$1\leq j \leq r$, $\psi_j$ is smooth and~$\psi_j(\lambda x) = \psi_j(x)$. Let a smooth function~$w:\R \to [0, 1]$ with~$\supp w \subset[-1, 1]$ be a partition of unity as
  \begin{equation}
    \sum_{n\in\Z} w(x + n) = 1 \qquad (x\in \R),\label{eq:partunity-parts}
  \end{equation}
  and define, for all~$x\in K^\ast$, $\Phi(x) := w(\psi_1(x)) \dotsb w(\psi_r(x))$. Then the function~$\Phi$ is well-defined, smooth and homogeneous on~$\R^d/\R^*$, and the property~\eqref{eq:partunity-units} follows by~$r$ applications of~\eqref{eq:partunity-parts}.

  To prove~\eqref{eq:partunity-sumfn}, let~$\fn = (n_0) \neq 0$ be an integral ideal, with~$n_0\in\O$. Then
  $$ \ssum{n\in\O \\ (n) = \fn} \Phi(n\ee) = \sum_{\ee'\in\O^\ast} \Phi(n_0 \ee \ee') = 1 $$
  by~\eqref{eq:partunity-units}.
\end{proof}

\subsection{Proof of Theorem~\ref{thm:meanvalue-prime}}

\subsubsection{Preparations}

We borrow the notation~$\chi_\tau$ from Lemma~\ref{lem:Fourier-psi} (see~\eqref{eq:def-chitau}). Let~$W:\R_+\to\R_+$ be a smooth function defining a partition of unity along powers of~$Q$ in the sense that for all~$x\geq 0$,
$$ W(x) \leq \1_{[1/(2Q), 1]}(x), \qquad \sum_{k\in\Z} W(Q^{-k}x) = 1. $$
For all~$\kappa\in\N$, let
$$ g_\lambda(\fn) = \ssum{n\in\cN_\lambda \\ (n) = \fn} f(n), \qquad S_\lambda(\kappa) = \sum_{\fp} W\Big(\frac{N(\fp)}{Q^\kappa}\Big)g_\lambda(\fp). $$
Note that by Lemma~\ref{lem:count-units}, we have
\begin{equation}
  \abs{g_\lambda(\fn)} \ll \lambda^{d-1}. \label{eq:trivial-gl}
\end{equation}
Next we smooth out the condition~$n\in\cN_\lambda$ as in Lemma~\ref{lem:Fourier-psi}. Let~$\tau\in\N$ be a parameter, and for all~$X\geq 1$,
$$ g_{\lambda,\tau}(\fn) = \ssum{n\in\Ok \\ (n)=\fn} \chi_\tau\circ\iota^{-1}\Big(\frac n{q^\lambda}\Big)f(n), \qquad S_{\lambda,\tau}(\kappa) = \sum_{\fp} W\Big(\frac{N(\fp)}{Q^\kappa}\Big) g_{\lambda,\tau}(\fp). $$
The function~$g_{\lambda,\tau}$ also satisfies the trivial bound~\eqref{eq:trivial-gl}, so that
$$ S_{\lambda,\tau}(\kappa) \ll \lambda^{d-1} Q^\kappa. $$
Borrowing temporarily the notations~$\phi_\tau$ and~$V_2$ from Lemma~\ref{lem:Fourier-psi}, we have
\begin{align*}
  \abs{S_\lambda(\kappa) - S_{\lambda,\tau}(\kappa)} \leq {}& \sum_{n\in\Ok} (\1_{V_2}\ast \phi_\tau)\Big(\iota^{-1}\Big(\frac{n}{q^\lambda}\Big)\Big) \\
  \ll {}& Q^{\lambda - \eta_2 \tau}
\end{align*}
by Poisson summation, the bound~\eqref{eq:fourier-bound-volV2} and Lemma~\ref{lem:lattice-count}. Finally, let~$\kappa_0\in[\lambda/2, \lambda]\cap\N$. We use the trivial bound for~$\kappa\leq \kappa_0$. Since~$g(\fn)\neq 0$ implies~$1\leq N(\fn)\ll Q^\lambda$, we deduce
$$ \ssum{n\in\cN_\lambda \\ n\text{ prime}} f(n) \ll \lambda Q^{\lambda-\eta_2\tau} + \lambda^d Q^{\kappa_0} + \lambda \sup_{\kappa_0 \leq \kappa \leq \lambda + C} \abs{S_{\lambda,\tau}(\kappa)}, $$
for some~$C$ depending on~$(q,\cD)$ at most. Using Lemma~\ref{lem:comb-ident}, we find
\begin{equation}
  \ssum{n\in\cN_\lambda \\ n\text{ prime}} f(n) \ll \lambda Q^{\lambda-\eta_2\tau} + \lambda^d Q^{\kappa_0} + \lambda \sup_{\kappa_0 \leq \kappa \leq \lambda + C} \big(\abs{S_{\lambda,\tau}^I(\kappa)} + \sup_{\alpha,\beta}\abs{S_{\lambda,\tau}^{II,\alpha,\beta}(\kappa)}\big),\label{eq:after-combidtt}
\end{equation}
where
\begin{align}
  S_{\lambda,\tau}^I(\kappa) = {}& \sum_{N(\fm) \leq Q^{\kappa/4}} \Big| \sum_\fn W\Big(\frac{N(\fm\fn)}{Q^\kappa}\Big)g_{\lambda,\tau}(\fm\fn) \Big| \label{eq:primes-tI} \\
  S_{\lambda,\tau}^{II,\alpha,\beta}(\kappa) = {}& \underset{ Q^{\kappa/4} < N(\fm) \leq Q^{3\kappa/4}}{\sum_\fm \sum_\fn} \alpha_\fm \beta_\fn W\Big(\frac{N(\fm\fn)}{Q^\kappa}\Big) g_{\lambda,\tau}(\fm\fn) \label{eq:primes-tII}
\end{align}
Before we proceed we require the following estimate. Define functions on~$\R^d$, resp. $K$, by
$$ V_0(x) = W(Q^{\lambda-\kappa}N(\iota(x))) \chi_\tau(x), \qquad V = V_0\circ\iota^{-1}. $$
\begin{lemma}\label{lem:majo-sumintVhat}
  We have
  $$ \sum_{\xi\in\Z^d} \abs{{\hat V_0}(\xi)} + \int_{\R^d} \abs{{\hat V_0}(\xi)} \dd\xi \ll Q^{\tau + \lambda-\kappa}, $$
  with an implied constant depending only on~$(q, \cD)$.
\end{lemma}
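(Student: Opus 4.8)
The plan is to peel off the fundamental‑tile smoothing from the norm cut‑off, and to defeat the resulting skewing by diagonalising the norm form.

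\textbf{Setup and reduction.} Write $F_0(x)=W(Q^{\lambda-\kappa}N(\iota(x)))$, so that $V_0=F_0\cdot\chi_\tau$. Since $W$ is supported in $[1/(2Q),1]$, the function $F_0$ vanishes in a neighbourhood of the algebraic set $\{N\circ\iota=0\}$ (and where $N\circ\iota<0$), hence is smooth on $\R^d$; and $\chi_\tau=\chi\ast\phi_\tau$ is smooth because $\phi_\tau$ is. Moreover $\supp\chi_\tau\subset\FR+B(0,2\norm{q^{-\tau}})$ lies in a fixed ball $B(0,C_0)$ with $C_0=O_{q,\cD,\norm{\cdot}}(1)$, so $\supp V_0\subset\{|N(\iota(x))|\leq Q^{\kappa-\lambda}\}\cap B(0,C_0)$, a shell of Lebesgue measure $\ll Q^{\kappa-\lambda}$ up to a factor polynomial in $\lambda$ (the factor coming from the shell hugging the coordinate hyperplanes; it is absent when $K=\Q$ or $K$ is imaginary quadratic). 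Fix once and for all an invertible linear map $L:\R^d\to\R^d$, with $\det L,\norm{L},\norm{L^{-1}}$ depending only on $(\omega_i)$, such that $|N(\iota(L^{-1}z))|=\prod_v\rho_v(z)$, where $v$ runs over the archimedean places of $K$, $\rho_v(z)=|z_j|$ at a real place ($z_j$ the attached coordinate) and $\rho_v(z)=z_j^2+z_{j'}^2$ at a complex place, the $\rho_v$ involving pairwise disjoint coordinate blocks. A fixed linear change of variables alters $\int_{\R^d}\abs{\,\widehat{\cdot}\,}$, each $\norm{\partial^\gamma(\cdot)}_{L^1}$, and the (distorted) lattice sum only by bounded factors; so it suffices to bound $\int_{\R^d}\abs{\widehat{\widetilde V_0}}$ and $\sum_{\eta\in\Lambda}\abs{\widehat{\widetilde V_0}(\eta)}$, where $\widetilde V_0:=V_0\circ L^{-1}=(F\Psi)\cdot\widetilde\chi_\tau$, with $F(z)=W(Q^{\lambda-\kappa}\prod_v\rho_v(z))$, $\widetilde\chi_\tau=\chi_\tau\circ L^{-1}$, $\Psi$ a fixed smooth function $\equiv1$ on a ball containing $\supp\widetilde\chi_\tau$, and $\Lambda=(L^{\mathsf T})^{-1}\Z^d$ a fixed lattice.

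\textbf{The convolution split.} From $\widehat{\widetilde V_0}=\widehat{F\Psi}\ast\widehat{\widetilde\chi_\tau}$ one gets
\[
  \int_{\R^d}\abs{\widehat{\widetilde V_0}}\leq\norm{\widehat{F\Psi}}_1\norm{\widehat{\widetilde\chi_\tau}}_1,
  \qquad
  \sum_{\eta\in\Lambda}\abs{\widehat{\widetilde V_0}(\eta)}\leq\Bigl(\sup_{\zeta}\sum_{\eta\in\Lambda}\abs{\widehat{F\Psi}(\eta-\zeta)}\Bigr)\norm{\widehat{\widetilde\chi_\tau}}_1.
\]
The tile factor is controlled by Plancherel: $\norm{\widehat{\widetilde\chi_\tau}}_1=\norm{\widehat{\chi_\tau}}_1=\norm{\chih\,\widehat{\phi_\tau}}_1\leq\norm{\chih}_2\norm{\widehat{\phi_\tau}}_2=\norm{\chi}_2\norm{\phi_\tau}_2=\meas(\FR)^{1/2}Q^{\tau/2}\norm{\phi}_2\ll Q^{\tau/2}$, using $\meas(\FR)=1$. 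For the other factor, with a fixed even $M\geq2$, both $\norm{\widehat{F\Psi}}_1$ and $\sup_\zeta\sum_{\eta\in\Lambda}\abs{\widehat{F\Psi}(\eta-\zeta)}$ are $\ll\sum_{\gamma\in\{0,M\}^d}\norm{\partial_z^\gamma(F\Psi)}_1$: indeed $\abs{\widehat{F\Psi}(\eta)}\prod_j(1+|\eta_j|^M)\ll\sum_\gamma\norm{\partial_z^\gamma(F\Psi)}_1$, while $\int_{\R^d}\prod_j(1+|\xi_j|^M)^{-1}\,d\xi$ and $\sup_\zeta\sum_{\eta\in\Lambda}\prod_j(1+|\eta_j-\zeta_j|^M)^{-1}$ are finite (the latter because $\prod_j(1+|\eta_j|^M)^{-1}$ is comparable to its bounded translates).

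\textbf{The anisotropic $L^1$ bound.} It remains to show $\norm{\partial_z^\gamma(F\Psi)}_1\ll Q^{\lambda-\kappa}\lambda^{O(1)}$ for $\gamma\in\{0,2\}^d$; this is where the skewing is handled. By Leibniz (the derivatives of $\Psi$ being $O(1)$), $\norm{\partial_z^\gamma(F\Psi)}_1\ll\sum_{\delta\leq\gamma}\int_{\supp F\cap\supp\Psi}\abs{\partial_z^\delta F}$. Since $\partial_{z_j}$ meets only the single factor $\rho_{v(j)}$, since $|\partial_z^\mu\rho_v|\ll\sigma_v^{e_v-|\mu|}$ for $|\mu|\leq e_v$ (with $\sigma_v=\rho_v$ at real, $\sigma_v=\rho_v^{1/2}$ at complex places, $e_v\in\{1,2\}$, and $\partial_z^\mu\rho_v=0$ for $|\mu|>e_v$), and since $Q^{\lambda-\kappa}\prod_v\rho_v\asymp1$ on $\supp F$, the multivariate Faà di Bruno formula gives $\abs{\partial_z^\delta F}\ll\prod_v\sigma_v(z)^{-m_v(\delta)}$ on $\supp F$, where $m_v(\delta)=\sum_{j\in v}\delta_j$. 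Passing to polar coordinates at each complex place and substituting $u_v=\log\sigma_v$ turns $\int_{\supp F\cap\supp\Psi}\prod_v\sigma_v^{-m_v}\,dz$ into $\int_{\mathcal S}\exp(\sum_v(e_v-m_v)u_v)\,du$ over the bounded slab $\mathcal S=\{u_v\ll1\ \forall v,\ \sum_v e_vu_v=(\kappa-\lambda)\log Q+O(1)\}$; evaluating it (shift $u_v\mapsto C_1-u_v$, bound the exponential by its maximum on $\mathcal S$, and use $\meas(\mathcal S)\ll\lambda^{O(1)}$) yields $\ll Q^{(\lambda-\kappa)\max(0,\,\max_v m_v/e_v-1)}\lambda^{O(1)}$. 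For $\gamma\in\{0,2\}^d$ one has $m_v(\delta)\leq m_v(\gamma)\leq2e_v$, so the exponent is $\leq\lambda-\kappa$ and $\norm{\partial_z^\gamma(F\Psi)}_1\ll Q^{\lambda-\kappa}\lambda^{O(1)}$. Combining the three paragraphs, $\sum_{\xi\in\Z^d}\abs{\widehat{V_0}(\xi)}+\int_{\R^d}\abs{\widehat{V_0}}\ll Q^{\lambda-\kappa}\lambda^{O(1)}\cdot Q^{\tau/2}\ll Q^{\tau+\lambda-\kappa}\lambda^{O(1)}$, which gives the claimed bound once the harmless polynomial‑in‑$\lambda$ factor is absorbed (in all applications $\lambda/2\leq\kappa\leq\lambda+O(1)$, and Theorem~\ref{thm:meanvalue-prime} already carries a $\lambda^C$).

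\textbf{Expected main obstacle.} The crux is precisely the skewing: in the original coordinates $F_0$ has derivatives of size $(Q^{\lambda-\kappa})^{|\gamma|}$ on its support, so an isotropic integration by parts costs $(Q^{\lambda-\kappa})^{O(d)}$, useless already for $d\geq3$. Diagonalising the norm form confines the blow‑up of the derivatives of $F$ to one direction per archimedean place, and the single relation $\prod_v\rho_v\asymp Q^{\kappa-\lambda}$ forces the derivative ``budgets'' of the different places to share, so that taking two derivatives in every coordinate still costs only $Q^{\lambda-\kappa}$. A secondary subtlety — not paying a spurious $\norm{q^\tau}^{O(1)}$ for the $\chi_\tau$‑smoothing — is dissolved by the convolution split together with the Plancherel estimate $\norm{\widehat{\chi_\tau}}_1\ll Q^{\tau/2}$ rather than the naive $\ll Q^{\rh\tau}$.
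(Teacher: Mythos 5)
Your proposal is correct in substance, and while it starts from the same first move as the paper --- writing $V_0$ as (norm cut-off)$\times\chi_\tau$ and passing to the convolution $\widehat{V_0}=\widehat{(\cdot)}\ast\widehat{\chi_\tau}$ --- the two key estimates are obtained by genuinely different means. The paper bounds the tile factor by the pointwise rapid decay $\abs{\widehat{\chi_\tau}(\xi)}\ll_A(1+\|\tilde q^{-\tau}\xi\|)^{-A}$, giving $Q^{\tau}$ in $L^1$, and handles the norm cut-off by rescaling by $q^{\kappa-\lambda}$ and a brief partial-integration remark giving a rapidly decaying bound with $L^1$ mass $Q^{\lambda-\kappa}$; you instead get the tile factor from Cauchy--Schwarz/Plancherel ($\|\widehat{\chi}\,\widehat{\phi_\tau}\|_1\leq\|\chi\|_2\|\phi_\tau\|_2\ll Q^{\tau/2}$), and you treat the norm cut-off by diagonalising the norm form in Minkowski coordinates, doing anisotropic integration by parts (two derivatives per coordinate), and evaluating the resulting integrals over the hyperbolic shell $\prod_v\rho_v\asymp Q^{\kappa-\lambda}$; this confronts the skewing explicitly, which the paper's one-line justification of its decay bound for the rescaled cut-off leaves implicit, and your bookkeeping (the Fa\`a di Bruno bound $\abs{\partial^\delta F}\ll\prod_v\sigma_v^{-m_v(\delta)}$ on $\supp F$, the slab estimate, and the auxiliary cut-off $\Psi$ that makes $F\Psi$ compactly supported) checks out. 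The one caveat is that your final estimate is $\ll Q^{\tau/2+\lambda-\kappa}\lambda^{O(1)}$, where the polynomial factor is intrinsic to your route (it is the logarithmic volume of the shell near the coordinate hyperplanes when $K$ has at least two archimedean places); this does not literally give the stated bound with a constant depending only on $(q,\cD)$ when $\tau=O(1)$ and $\lambda\to\infty$, so as a proof of the lemma verbatim it falls slightly short. But, as you note, the factor is harmless everywhere the lemma is used --- the type I/II estimates and Theorem~\ref{thm:meanvalue-prime} already carry unspecified powers of $\lambda$ --- and your improved dependence $Q^{\tau/2}$ in place of $Q^{\tau}$ is a genuine bonus of the Plancherel step.
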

\begin{proof}
  We introduce a smooth, compactly supported function~$W_0$, majorizing the indicator function of the support of~$\chi_0$, and redundant in the sense that~$\chi_\tau = \chi_\tau W_0$. We then have, for all~$\xi\in\R^d$,
  $$ {\hat V}(\xi) = \int_{\R^d} \chih_\tau(\xi') {\hat W_1}(\xi-\xi') \dd\xi', $$
  with~$W_1(x) = W(N\circ \iota(q^{\lambda-\kappa} x)) W_0(x)$. By~\eqref{eq:def-chitau} and \eqref{eq:bound-der-phi}, we have
  \begin{equation}
    \abs{\chih_\tau(\xi)} \ll_A (1 + \|q^{-\tau} \xi\|)^{-A}\label{eq:majo-pw-chih}
  \end{equation}
  for all~$A\geq 0$. On the other hand, we have
  $$ {\hat W_1}(\xi) = Q^{\kappa-\lambda} \int_{\R^d} W(N\circ \iota(x)) W_0(q^{\kappa-\lambda} x) \e(\tc{x, \qt^{\kappa-\lambda}\xi})\dd x. $$
  By partial differentiation, since~$\|\qt^{\kappa-\lambda}\| \ll 1$, we obtain for all~$A\geq 0$
  \begin{equation}
    \abs{{\hat W_1}(\xi)} \ll_A (1 + \|\qt^{\kappa-\lambda}\xi\|)^{-A}.\label{eq:majo-pw-W1}
  \end{equation}
  The bound for~$\int_{\R^d}\abs{{\hat V_0}(\xi)}\dd\xi$ immediately follows by multiplying the integrals of~\eqref{eq:majo-pw-chih} and \eqref{eq:majo-pw-W1} with respect to~$\xi$. The bound for~$\sum_{\xi\in\Z^d}\abs{{\hat V_0}(\xi)}$ follows by the upper bound
  $$ \sum_{\xi\in\Z^d} (1 + \|\qt^{-\tau}(\xi + \xi_0)\|)^{-d-1} \ll Q^\tau, $$
  valid for all~$\xi_0\in\R^d$: indeed, by translating we may ensure that~$\qt^{-\tau}\xi_0 \in \FR$, and the resulting sum is estimated by Lemma~\ref{lem:lattice-count}.

\end{proof}

\subsubsection{Type~I sums}

For each~$\fm$ in the sum~\eqref{eq:primes-tI}, we have
$$ \sum_\fn W\Big(\frac{N(\fm\fn)}{Q^\kappa}\Big) g_{\lambda,\tau}(\fm\fn) = \ssum{n\in\Ok \\ \fm\mid (n)} V\Big(\frac{n}{q^\lambda}\Big) f(n). $$
Using Lemma~\ref{lem:units} on the~$\fm$-sum, we deduce, for any~$\ee\in\Ok^\ast$,
\begin{equation}
  S_{\lambda,\tau}^I(\kappa) = \ssum{m\in\Ok \\ 0<N(m) \leq Q^{\kappa/4}} \Phi(m\ee) \Big| \ssum{n\in\Ok} V\Big(\frac{mn}{q^\lambda}\Big) f(mn) \Big|.\label{eq:transfer-ideal-el}
\end{equation}
Let~$\mu\geq \floor{\kappa/4}+1$. We pick~$\ee$ so that~$\abs{(q^\mu \ee)^\pi} \asymp Q^{\mu/d}$. This ensures that for any~$m$ in the sum, we have~$\abs{m/q^{\mu}} \ll 1$, so that for some choice~$\mu = \kappa/4 + O(1)$, we have~$m\in\cN_\mu$. We deduce
$$ S_{\lambda,\tau}^I(\kappa) \leq \ssum{m\in\cN_\mu}\Big| \ssum{n\in\Ok} V\Big(\frac{mn}{q^\lambda}\Big)f(mn) \Big|. $$
Note that~$\supp(V) \subset \supp \chi_0$, which depends only on~$(q,\cD)$. Apply Proposition~\ref{prop:type-i} along with Lemma~\ref{lem:majo-sumintVhat} yields
\begin{equation}
  S_{\lambda,\tau}^I(\kappa) \ll \lambda^{d+1} Q^{\lambda - \frac{\eta_1}{1+\eta_1}\gamma(\lambda/3) + \tau + \lambda-\kappa}.\label{eq:prime-bound-I}
\end{equation}

\subsubsection{Type~II sums}

Splitting the interval~$[Q^{\kappa/4}, Q^{3\kappa/4}]$, we have
$$ \sup_{(\alpha, \beta)}\abs{S_{\lambda,\tau}^{II,\alpha,\beta}(\kappa)} \leq \kappa \sup_{\substack{\mu\in\N \\ \frac\kappa4 \leq \mu \leq \frac{3\kappa}4}} \sup_{(\alpha, \beta)}
\Big| \underset{ Q^\mu < N(\fm) \leq Q^{\mu+1}}{\sum_\fm \sum_\fn} \alpha_\fm \beta_\fn W\Big(\frac{N(\fm\fn)}{Q^\kappa}\Big) g_{\lambda,\tau}(\fm\fn) \Big|. $$
Let~$\mu, \alpha, \beta$ satisfy the conditions in the suprema. By arguing as in~\eqref{eq:transfer-ideal-el}, we have
$$ \underset{ Q^\mu < N(\fm) \leq Q^{\mu+1}}{\sum_\fm \sum_\fn} \alpha_\fm \beta_\fn W\Big(\frac{N(\fm\fn)}{Q^\kappa}\Big) g_{\lambda,\tau}(\fm\fn) = \underset{\substack{m, n\in\Ok \\ Q^\mu < N(m) \leq Q^{\mu+1}}}{\sum\sum} \alpha_m \Phi(m\ee) \beta_n V\Big(\frac{mn}{q^\lambda}\Big) f(mn) $$
for all~$\ee\in\Ok^\ast$. Here we abbreviated~$\alpha_m := \alpha_{(m)}$ and~$\beta_n:=\beta_{(n)}$. We pick~$\ee$ so that~$\abs{(m/q^\mu)^\pi} \asymp 1$. Since also~$\|mn/q^\lambda\| \ll 1$ by the support of~$V$, we deduce that for some~$\mu' = \mu + O(1)$ and~$\nu$ with~$\mu' + \nu = \lambda + O(1)$, we have~$m\in\cN_{\mu'}$ and~$n\in\cN_{\nu}$. Writing, for all~$x\in K$,
$$ V(x) = \int_{\R^d} {\hat V_0}(\xi) \e\left(\tc{\xi, \iota^{-1}(x)}\right)\dd\xi, $$
we apply Proposition~\ref{prop:type-ii}, exchanging the roles of~$\mu$ and~$\nu$ if~$\mu'> \nu$, and setting~$\psi(x) = \tc{\xi, \iota^{-1}(x/q^\lambda)}$. By Lemma~\ref{lem:majo-sumintVhat}, and the divisor-bound
$$ \sum_{n\in \cN_\nu} \tau((n))^4 \ll \sum_{N(\fn) \ll Q^\nu} \tau(\fn)^4 \ssum{n\in \fn \\ \|n/q^\nu\| \ll 1} 1 \ll  \nu^{O(1)} Q^\nu, $$
we deduce
$$ \Big| \underset{\substack{m, n\in\Ok \\ Q^\mu < N(m) \leq Q^{\mu+1}}}{\sum\sum} \alpha_m \Phi(m\ee) \beta_n V\Big(\frac{mn}{q^\lambda}\Big) f(mn) \Big| \ll \lambda^{O(1)} Q^{\lambda - \delta \gamma(\floor{\frac{\theta\lambda}{100\Theta}}) + \tau + \lambda-\kappa}, $$
where~$\delta \gg \min\{\eta_1^2\eta_2, \eta_1\theta \}$. We conclude that
\begin{equation}
  \sup_{(\alpha, \beta)}\abs{S_{\lambda,\tau}^{II,\alpha,\beta}(\kappa)} \ll \lambda^{O(1)} Q^{\lambda - \delta \gamma(\floor{\frac{\theta\lambda}{100\Theta}}) + \tau + \lambda-\kappa}\label{eq:prime-bound-II}
\end{equation}

\subsubsection{Conclusion}

The claimed bound follows upon grouping the estimates~\eqref{eq:after-combidtt}, \eqref{eq:prime-bound-I} and \eqref{eq:prime-bound-II}, and optimizing~$\tau$ and~$\kappa_0$ by
$$ \lambda - \kappa_0 = \frac{\delta}{2+\eta_2^{-1}}\gamma\Big(\floor{\frac{\theta\lambda}{100\Theta}}\Big) + O(1), \quad \tau = \eta_2^{-1}(\lambda-\kappa_0) + O(1). $$

\section{Two arithmetic applications : sums of digits and Rudin-Shapiro sequences}\label{sec:two-arithm-appl}

In this section we prove Theorems~\ref{thm:main-thue} and~\ref{thm:main-rudin}. In view of Theorem~\ref{thm:meanvalue-prime}, it will suffice to prove that the functions~$s_{q,\cD}(n)$ and~$r_{q,\cD}(n)$ defined in~\eqref{eq:def-sqK}--\eqref{eq:def-rqK} satisfy the Carry and Fourier properties~\eqref{eq:carry-prop}--\eqref{eq:Fourier-prop}.

\subsection{Sums of digits in~$\O$}

We let~$\sum_{j=0}^d c_j X^j$ be the minimal polynomial of~$q$ (with~$c_d = 1$), and also
$$ \mu_q = \sum_{j=0}^{d} c_j \in \Z, \qquad M_q := \sum_{j=0}^d \abs{c_j}^2. $$

\begin{lemma}\label{lem:props-sumdig}
  Let~$\alpha\in K$. The function given by~$f(n) = \e(\tc{\alpha s_{q,\cD}(n)})$  satisfies the Carry property~\eqref{eq:carry-prop} with~$\eta_1 = \eta_2$, and the Fourier property~\eqref{eq:Fourier-prop} with a function~$\gamma$ satisfying, for some~$\delta_Q>0$ depending on~$Q$ only,
  \begin{equation}
    \gamma(\lambda) \geq C_{q,\cD,\alpha} \lambda + O(1), \qquad C_{q, \cD, \alpha} = \frac{\delta_Q}{M_q(d+1)} \sum_{b\in\cD}\norm{\tc{\mu_q\alpha b}}_{\R/\Z}^2.\label{eq:Fourierprop-sumdig}
  \end{equation}
\end{lemma}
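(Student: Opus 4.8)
The plan is to verify the Carry property and the Fourier property separately, both following the template of \cite{MauduitRivat2010,MR} but adapted to~$\O$. For the Carry property: since $s_{q,\cD}$ is a $q$-additive function (the digits contribute independently), we have $s_{q,\cD}(a+bq^\kappa)=s_{q,\cD}(a)+s_{q,\cD}(b)$ whenever no carry propagates past the $\kappa$-th digit when adding. Writing $f_{\kappa+\rho}$ for the function that only reads the first $\kappa+\rho$ digits, the inequality~\eqref{eq:carry-prop} can only fail for a given $v\in\cN_\lambda$ if the addition $u_1+u_2+vq^\kappa$ produces a carry reaching index $\kappa+\rho$. By Lemma~\ref{lem:carry} applied with the shift~$\kappa$ (or rather its translated statement on~$r_{\kappa+\rho,\infty}$), the number of such~$v$ is $O(Q^{\lambda-\eta_2\rho})$, which gives the Carry property with~$\eta_1=\eta_2$. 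This step is essentially bookkeeping once Lemma~\ref{lem:carry} is in hand.

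The substantial step is the Fourier property~\eqref{eq:Fourier-prop}. Here I would mimic the classical computation: because $s_{q,\cD}$ is $q$-additive, the exponential sum $\sum_{v\in\cN_\lambda} f(vq^\kappa)\e(\tc{tv})$ factors, up to the carry-free main term, as a product over digit positions. Concretely, write $v=\sum_{0\le j<\lambda} b_j q^j$ with $b_j\in\cD$; then modulo a controlled carry error (handled exactly as in the Carry property, costing a factor $Q^{O(\rho)}$ against a gain $Q^{-\eta_2\rho}$, to be optimised at the end), one gets
$$ \sum_{v\in\cN_\lambda} f(vq^\kappa)\e(\tc{tv}) \approx \prod_{0\le j<\lambda} \Big( \sum_{b\in\cD} \e\big(\tc{\alpha b} + \tc{t b q^{j+\kappa}}\big)\Big). $$
Each factor has modulus at most $Q$ and the product is bounded by $Q^\lambda \prod_j |\Xi_j|$ where $|\Xi_j|\le 1$; the point is to show that a positive proportion of the factors are bounded away from~$1$. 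The elementary inequality $|\sum_{b\in\cD}\e(\theta_b)| \le \card\cD - c\sum_{b,b'}\|\theta_b-\theta_{b'}\|_{\R/\Z}^2$ (for an absolute $c>0$) reduces this to showing that $\tc{\alpha(b-b')}$ is not too close to an integer for a suitable pair of digits, uniformly over $j$ and $t$. To make this uniform, I would use the relation $\mu_q = \sum c_j$ coming from the minimal polynomial: summing the digit recursion against $(c_0,\dots,c_d)$ shows that $q^{j+\kappa}$ (acting by the companion matrix) satisfies a telescoping identity that allows one to eliminate the $t$-dependence in at least one of every $d+1$ consecutive factors, leaving the pure phase $\tc{\mu_q\alpha b}$. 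Summing the resulting lower bound $c\sum_{b\in\cD}\|\tc{\mu_q\alpha b}\|_{\R/\Z}^2$ over the $\gg\lambda/(d+1)$ good positions yields the linear lower bound~\eqref{eq:Fourierprop-sumdig}, with the $M_q$ in the denominator accounting for the $\ell^2$ normalisation of the dual vector $(c_j)$ and $\delta_Q$ absorbing the absolute constant and the passage from $\|\cdot\|_{\R/\Z}$ bounds to $|\Xi_j|\le e^{-c\|\cdot\|^2}$-type estimates.

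The main obstacle I expect is precisely the uniformity in $t\in K$ in the digit-by-digit estimate: one must ensure that the ``bad'' frequencies $t$ for which many factors $|\Xi_j|$ are close to~$1$ cannot persist over a linear number of digit positions, and this is where the arithmetic of $q$ (via its minimal polynomial and the quantity $\mu_q$) genuinely enters — a naive bound would only control finitely many positions at a time. The secondary technical point is that $q$ need not act as a similarity on the lattice $\O$, so the ``shift by one digit'' map $t\mapsto q^* t$ on the dual can skew; but since we only need a lower bound on the count of good positions and the conjugates of~$q$ all have modulus~$>1$, the Gelfand-type estimates~\eqref{eq:Gelfand} and Lemma~\ref{lem:norms} suffice to control this without extra loss. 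After assembling the product bound and optimising the carry parameter~$\rho$, one obtains~\eqref{eq:Fourier-prop} with~$\gamma$ as claimed; note that the resulting $\gamma(\lambda)\asymp_{\alpha}\lambda$ is consistent with the constraint~\eqref{eq:bestbound-gamma}.
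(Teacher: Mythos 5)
Your proposal follows essentially the same route as the paper: the Carry property is reduced to Lemma~\ref{lem:carry} through the bounded carry entering at position~$\kappa$, and the Fourier property is obtained by factoring the exponential sum over digit positions and combining $d+1$ consecutive factors via the minimal polynomial of~$q$, with the triangle and Cauchy--Schwarz inequalities producing the $t$-free phase $\tc{\mu_q\alpha b}$ and the normalisation $M_q(d+1)$; this is precisely the paper's adaptation of Lemme~20 of Mauduit--Rivat. One correction, though: there is no carry error in the Fourier step. Since base-$q$ expansions are unique, $\cN_\lambda$ is in bijection with $\cD^\lambda$, and $s_{q,\cD}(vq^\kappa)=s_{q,\cD}(v)=\sum_j b_j$ exactly, so $\sum_{v\in\cN_\lambda} f(vq^\kappa)\e(\tc{tv})=\prod_{0\le j<\lambda}\sum_{b\in\cD}\e(\tc{(\alpha+tq^j)b})$ holds identically (also note the phase is $\tc{tbq^{j}}$, not $\tc{tbq^{j+\kappa}}$, since the character in~\eqref{eq:Fourier-prop} is evaluated at~$v$, so $\kappa$ simply disappears for this $f$). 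If you literally carried an error term $O(Q^{\lambda-\eta_1\rho})$ through a final optimisation in~$\rho$, you would obtain a linear lower bound for~$\gamma$ but with a constant strictly smaller than $C_{q,\cD,\alpha}$, so \eqref{eq:Fourierprop-sumdig} with the stated constant would not follow from your accounting; dropping the spurious error makes your argument coincide with the paper's. Finally, the blocking mechanism is not that $t$ is eliminated in one factor out of every $d+1$: rather $\sum_j c_jq^j=0$ gives $\sum_j c_j\tc{(\alpha+tq^{i+j})b}\equiv\tc{\mu_q\alpha b}\pmod{1}$, and the triangle and Cauchy--Schwarz inequalities bound the combined loss over $d+1$ consecutive factors from below by $\norm{\tc{\mu_q\alpha b}}_{\R/\Z}^2/M_q$ --- the same idea, and it yields exactly the constant you state.
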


\begin{proof}
  We consider first the Carry property. If~\eqref{eq:carry-prop} holds, then there is a carry propagation in the sum~$m + n$, where~$m = u_1 + vq^\kappa$ and~$n = u_2$. Then, in the notations of Lemma~\ref{lem:carry}, the exists some~$b\in\B_{st}$ such that in the addition~$v + b$, the carry propagates beyond the~$\rho$-th digit. By Lemma~\ref{lem:carry} and finiteness of~$\B_{st}$, there are at most~$O(2^{\lambda-\eta_2 \rho})$ possibilities for~$v$.

  To establish the Fourier property, we argue as in Lemme~20 of~\cite{MauduitRivat2010}, and Lemma~6.3 of~\cite{DrmotaRivatEtAl2008}. We let
  $$ \phi(t) := \abs{\sum_{b\in\cD} \e(\tc{(\alpha + t) b})}. $$
  Using that~$0\in\cD$ and Taylor expansion near the origin, we obtain the existence of~$\varpi_Q>0$, depending only on~$Q$, such that
  $$ \abs{\sum_{b\in\cD} \e(\theta_b)} \leq Q^{1 - \varpi_Q\sum_{b\in\cD} \norm{\theta_b}^2} $$
  for all tuples of real numbers~$(\theta_b)_{b\in\cD}$ with~$\theta_0 = 0$. We deduce, for any fixed~$t\in K$,
  \begin{equation}
    \abs{\phi(t)\phi(tq) \dotsb \phi(tq^d)} \leq Q^{d+1 - \varpi_Q \sum_{b\in\cD} \sum_{j=0}^d \norm{\tc{(\alpha + tq^d)b}}^2}\label{eq:fourprop-sq-1}
  \end{equation}
  On the other hand, by the triangle and the Cauchy--Schwarz inequalities,
  \begin{align*}
    \norm{\tc{\mu_q \alpha b}}_{\R/\Z}^2 \leq {}& \Big(\sum_{j=0}^d \abs{c_j} \norm{\tc{(\alpha + tq^j) b}}_{\R/\Z}\Big)^2 \\
    \leq {}& M_q \sum_{j=0}^d \norm{\tc{(\alpha + tq^j)b}}_{\R/\Z}^2.
  \end{align*}
  Summing this inequality over~$b$ and inserting in~\eqref{eq:fourprop-sq-1} yields
  $$ \sup_{t\in K} \abs{\phi(t)\phi(tq) \dotsb \phi(tq^n)} \leq Q^{(n+1)(1-C_{q,\cD,\alpha})}, $$
  where~$C_{q,\cD,\alpha}$ is given in~\eqref{eq:Fourierprop-sumdig} with~$\delta_Q = Q\varpi_Q$. From here, reasonning as in Lemme~20 of~\cite{MauduitRivat2010} concludes the proof.
\end{proof}

\begin{proof}[Proof of Theorem~\ref{thm:main-thue}]
  Let~$h\in\Z_{\neq 0}$. For some~$\alpha\in K$ and all~$x\in K$, we have~$\phi(x) = \tc{\alpha x}$. If~$\phi(b) \not\in\Q$ for some~$b\in\cD$, then~$\norm{h\tc{\mu_q \alpha b}}_{\R/\Z}>0$. We apply Theorem~\ref{thm:meanvalue-prime} with~$f(n) = \e(h\phi(s_q(n)))$. Using Lemma~\ref{lem:props-sumdig}, we deduce the existence of~$\delta>0$ such that for all~$\lambda\in\N$,
  $$ \sum_{n\in\cN_\lambda} \e(h \phi(s_q(n))) \ll Q^{(1-\delta)\lambda}. $$
  The Weil criterion~\cite[Theorem~I.6.13]{Tenenbaum2015} concludes the proof.
\end{proof}

\subsection{Rudin-Shapiro sequences}

\begin{lemma}\label{lem:props-rs}
  Let~$\alpha \in \R$, and~$(q, \cD)$ be a binary FNS. The function given by~$f(n) = \e(\alpha r_{q,\cD}(n))$ satisfies the Carry property~\eqref{eq:carry-prop} with~$\eta_1 = \eta_2$, and the Fourier property~\eqref{eq:Fourier-prop} with a function~$\gamma$ satisfying
  $$ \gamma(\lambda) \geq \frac\lambda2 \log\Big(\frac2{1+\abs{\cos(\pi \alpha)}}\Big) + O(1), $$
  and any~$\kappa\in\N$.
\end{lemma}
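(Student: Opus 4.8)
The plan is to establish the two properties separately, following the pattern of Lemma~\ref{lem:props-sumdig} but adapting to the block-additive (rather than $q$-additive) nature of $r_{q,\cD}$.

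For the Carry property, I would argue exactly as in the sum-of-digits case. Since $r_{q,\cD}$ depends only on consecutive pairs of digits, any discrepancy of the form~\eqref{eq:carry-prop} between $f(u_1+u_2+vq^\kappa)\bar{f(u_1+vq^\kappa)}$ and its truncated analogue forces a carry in the addition $m+n$ (with $m=u_1+vq^\kappa$, $n=u_2$) to propagate past index $\kappa+\rho$; more precisely, in the notation of Lemma~\ref{lem:carry}, there must be some carry state $b\in\B_{st}$ for which, in the addition $v+b$, the carry reaches beyond the $\rho$-th digit. (The key point is that the pairwise digit products affected by the carry only extend the ``zone of influence'' by one digit, which is absorbed into constants.) By Lemma~\ref{lem:carry} and the finiteness of $\B_{st}$, the number of such $v$ is $O(Q^{\lambda-\eta_2\rho})$, giving $\eta_1=\eta_2$.

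For the Fourier property, I would mimic Lemme~20 of~\cite{MauduitRivat2010} and the treatment of the Rudin--Shapiro sequence in~\cite{MR}. Write $f(vq^\kappa)$ in terms of the digits $b_0,\dots,b_{\lambda-1}$ of $v$; since $(q,\cD)$ is binary, $\cD=\{0,d_1\}$ and each indicator $\1(b_jb_{j+1}\neq 0)$ equals $\1(b_j=b_{j+1}=d_1)$. The sum $\sum_{v\in\cN_\lambda}f(vq^\kappa)\e(\tc{tv})$ is then an exponential sum over $\{0,1\}^\lambda$ whose weight factorizes along a transfer-matrix structure: writing $\e(\tc{tv})$ as $\prod_j \e_{?}(\cdots)$ using the digit expansion and reindexing by powers of $q$, the sum becomes (up to a bounded number of boundary factors) a product of $2\times 2$ matrices, one per digit, whose entries are $1,\e(\tc{(\text{linear in }t)\,d_1})$ and $\e(\alpha)$ times such terms. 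The operator norm of each matrix is bounded by the largest singular value; a direct computation (as in~\cite[Lemma]{MR}) shows this is at most $\sqrt{1+|\cos\pi\alpha|}$ after one exploits that consecutive matrices cannot both be ``bad'' — grouping digits in blocks of two (or a fixed size), one gets that a product of $\ell$ consecutive matrices has norm $\leq (1+|\cos\pi\alpha|)^{\ell/2}\cdot 2^{O(1)}$, hence the full sum is $\ll Q^{\lambda}\big(\tfrac{1+|\cos\pi\alpha|}{2}\big)^{\lambda/2}$ since $Q=2$. Taking logarithms yields $\gamma(\lambda)\geq \tfrac\lambda2\log\!\big(\tfrac{2}{1+|\cos\pi\alpha|}\big)+O(1)$, uniformly in $t$ and in $\kappa$ (the shift by $q^\kappa$ only permutes/relabels digit positions and contributes $O(1)$ boundary matrices).

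The main obstacle is the transfer-matrix bound itself: one must verify that the relevant $2\times 2$ matrices, which depend on $t$ through the additive characters $\e(\tc{t q^j d_1})$, always satisfy the claimed norm bound after grouping, \emph{uniformly in $t\in K$}. In the rational case this is the content of the key lemma of~\cite{MR}; here one must check that the argument is insensitive to the embedding of $K$, i.e.\ that it only uses the combinatorial structure of the pairwise digit products and the fact that $\alpha$ is real, not any arithmetic of $q$. This is where the hypothesis $\card(\cD)=2$ is essential — it is what makes the block structure a genuine product of $2\times 2$ matrices with the correct spectral gap. Once this uniform bound is in hand, the remaining steps (summation over digit blocks, handling $O(1)$ boundary terms, and the passage $\kappa\leq c\lambda$) are routine and identical to~\cite{MauduitRivat2010,MR}.
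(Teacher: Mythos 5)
Your argument for the Carry property is the same as the paper's (which simply invokes the argument of Lemma~\ref{lem:props-sumdig}): the only new feature of the block structure is the one extra boundary digit, which, as you say, is absorbed into the constants, and Lemma~\ref{lem:carry} then gives $\eta_1=\eta_2$. For the Fourier property you take a genuinely different route: the paper does not redo the transfer-matrix computation but quotes Theorem~3.1 of Allouche--Liardet, observing that what is bounded there is a sum over all words of fixed length with arbitrary real phases, so no reduction from integers to words is needed when summing over the full set $\cN_\lambda$. Your sketch essentially reproves that cited input, and it does work; in fact it is simpler than you fear. After writing $\cD=\{0,d_1\}$, $b_j=\epsilon_jd_1$, $\theta_j=\tc{td_1q^j}$, and noting that the low digits of $vq^\kappa$ vanish so that $r_{q,\cD}(vq^\kappa)=r_{q,\cD}(v)$ (whence the claim ``any $\kappa$''), the sum is
\begin{equation*}
  \sum_{\epsilon\in\{0,1\}^\lambda}\e\Big(\alpha\sum_j\epsilon_j\epsilon_{j+1}\Big)\e\Big(\sum_j\epsilon_j\theta_j\Big),
\end{equation*}
which is computed by the product of the matrices $M_j=\begin{pmatrix}1&1\\ \e(\theta_j)&\e(\theta_j+\alpha)\end{pmatrix}$. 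Here $M_jM_j^\ast$ has eigenvalues $2\pm\abs{1+\e(\alpha)}$, so $\|M_j\|_{\rm op}=\sqrt{2(1+\abs{\cos\pi\alpha})}$ \emph{independently of $\theta_j$}: your ``main obstacle'' (uniformity in $t\in K$, i.e.\ in the arbitrary phases produced by the embedding of $K$) dissolves, and no grouping of digits in pairs or ``consecutive matrices cannot both be bad'' device is needed — that mechanism belongs to other digital bounds, not to Rudin--Shapiro. Multiplying the $\lambda$ single-matrix bounds gives $\ll 2^{\lambda}\big(\tfrac{1+\abs{\cos\pi\alpha}}2\big)^{\lambda/2}$, i.e.\ exactly the stated $\gamma(\lambda)$, with $O(1)$ boundary losses. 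So your route buys a short self-contained proof in place of the paper's citation, at the cost of having to carry out (and state correctly) this norm computation, which your sketch asserts but slightly misdescribes.
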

\begin{proof}
  The Carry property follows by an argument identical to the one used in Lemma~\ref{lem:props-sumdig}.
  For the Fourier property, we use Theorem~3.1 in~\cite{AlloucheLiardet1991}. Note that the sum is restricted to integers there, but what is actually considered is a sum over all words of fixed length. The corresponding reduction in pages~12-13 is not needed in our case, since we are summing over the full set~$\cN_\lambda$.
\end{proof}

\begin{proof}[Proof of Theorem~\ref{thm:main-rudin}]
  The deduction of Theorem~\ref{thm:main-rudin} is identical to the argument used in the case~$s_{q,\cD}(n)$.
\end{proof}

\subsection*{Acknowledgements}

This work was partly supported by the ANR (France) and FWF (Austria) through the project ANR-14-CE34-0009 MUDERA.
As the present paper draws from the influential works of Mauduit and Rivat on this topic, we wish to dedicate this paper to the memory of C. Mauduit. We are grateful to J. Rivat, J. Thuswaldner, C. Müllner and the anonymous referee for helpful discussions and remarks on the topics of this work.

\appendix

\section{Asymptotic behaviour of the addition constant}\label{sec:asympt-behav-addit}

The constant~$\eta_2$ from Lemma~\ref{lem:carry} does not seem to admit an explicit expression in terms \textit{e.g.} of the minimal polynomial of~$q$. In this section we consider the special case of canonical number systems (CNS), meaning those number systems $(q, \cD)$ satisfying~$\cD = \{0, 1, \dotsc, Q-1\}$. By~\cite{Kovacs1981}, if~$q_0$ is the basis of a CNS, then for all large enough~$m\in\N$, $-m+q$ is also the basis of a CNS. The goal of this appendix is to show that as~$m\to+\infty$, there are admissible carry constants (from Lemma~\ref{lem:carry}) which are very close to the best possible value.

\begin{proposition}
  Suppose that~$q_0$ is the basis of a CNS, and let~$m\in\N$ be large enough that~$q_m :=-m+q_0$ also is. Then, for the CNS associated with~$q_m$, Lemma~\ref{lem:carry} holds for a value of~$\eta_{2,m}$ satisfying
  $$ \eta_{2, m} \geq \frac 1d - O\Big(\frac1{\log m}\Big), $$
  where the implied constant depends at most on~$K$ and~$q_0$. Consequently, the border~$\partial \cF_m$ of the fundamental tile associated to~$q_m$ has upper-box dimension
  $$ \bar{\dim_B}( \partial \cF_m) \leq d-1 + O\Big(\frac{1}{\log m}\Big). $$
\end{proposition}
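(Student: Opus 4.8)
The plan is to follow the construction in the proof of Lemma~\ref{lem:carry} and track how the quantities involved depend on $m$ as $m\to\infty$. Recall that the carry constant is $\eta_{2,m} = -(\log c_m)/\log Q_m$, where $c_m\in(0,1)$ governs the probability that a random walk of length $\rho$ on the absorbing Markov chain on the set of carries $\B_{st}^{(m)}$ fails to be absorbed at $0$, and $Q_m = N(q_m)$. Since $q_m = -m + q_0$, we have $Q_m = N(q_m) \asymp m^d$ (up to lower-order terms coming from the other coefficients of the minimal polynomial), so $\log Q_m = d\log m + O(1)$. Hence the statement $\eta_{2,m} \geq \tfrac1d - O(1/\log m)$ is equivalent to showing $-\log c_m \geq \log m + O(1)$, i.e. $c_m \ll 1/m$.

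First I would make the set of carries explicit. For a CNS with digit set $\{0,1,\dots,Q_m-1\}$, the carry recursion in the addition of two elements with digits in $\cD$ produces carries lying in a set $\B_{st}^{(m)}$ that is \emph{uniformly bounded independently of $m$}: indeed $\B_{st}^{(m)}$ is contained in the set of $z\in\Ok$ of the form $\sum_{j\geq 1}(n_{j,1}+n_{j,2}-n_{j,3})q_m^{-j}$ with $n_{j,k}\in\cD$, and since each digit is $O(Q_m)$ while $\|q_m^{-j}\|\ll m^{-\rb j}\ll m^{-j}$ (using $\rb\geq 1$ here, as $\rh=\rb=1$ would be the relevant regime but in any case $\rb>0$ suffices with an $m$-independent bound once $m$ is large), one checks the sum is $O(1)$; more carefully, the relevant normalization is that a carry $b$ out of digit position $j$ satisfies $|b^\pi|\ll 1$ for every embedding $\pi$, uniformly in $m$. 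So $\card\B_{st}^{(m)} = O(1)$. The key point, which I would isolate as the main lemma, is that from \emph{every} state in $\B_{st}^{(m)}$, there is an edge leading directly to the absorbing state $0$ with probability $\geq 1/Q_m$: given a carry $b$, we need a digit $a\in\cD$ with $b + a + a' \in \cD + q_m\cdot 0 = \cD$ for the worst admissible $a'\in\cD$ — or more simply, in the chain as defined (edges $n\overset{a}{\to}m \iff n+a\in qm+\cD$ with $a$ uniform over $\cD$), for each $n\in\B_{st}^{(m)}$ there is at least one digit $a$ sending $n$ to $0$. This follows because $n$ is bounded and $\cD=\{0,\dots,Q_m-1\}$ is a long interval, so the equation $n+a\in\cD$ (i.e. $m=0$) is solvable in $a\in\cD$ as soon as $Q_m$ is large enough, which holds for large $m$. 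Each such transition has probability $1/Q_m$.

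Given this, a random walk of length $\rho$ starting anywhere fails to reach $0$ with probability at most $(1 - 1/Q_m)^{\rho}$ after blocking into single steps, so $c_m = 1 - 1/Q_m + o(1/Q_m)$, whence $-\log c_m \geq 1/Q_m$... which is the wrong direction. I would instead argue as in the lemma: after \emph{one} successful step from any state we are absorbed, so the walk survives $\rho$ steps with probability at most $(1-Q_m^{-1})^{\lfloor\rho/L\rfloor}$ where $L=O(1)$ bounds the diameter of $\B_{st}^{(m)}$ — but the cleanest route is: the number of length-$\rho$ digit-tuples $(m_j)$ describing a non-absorbed walk is at most $(\card\B_{st}^{(m)})\cdot(Q_m-1)^{\rho} = O((Q_m-1)^{\rho})$, since at each step at least one of the $Q_m$ digits is forbidden (namely any one that would send the current state to $0$). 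Dividing by the total $Q_m^{\mu}$ gives a count of exceptional $m$ in Lemma~\ref{lem:carry} bounded by $\card\B_{st}^{(m)}\, Q_m^{\mu-\rho}(1-Q_m^{-1})^{\rho}$. Now $(1-Q_m^{-1})^{\rho} = Q_m^{-\rho\cdot(-\log(1-Q_m^{-1})/\log Q_m)}$, and $-\log(1-Q_m^{-1}) = Q_m^{-1}+O(Q_m^{-2})$, which gives $\eta_{2,m}\asymp Q_m^{-1}/\log Q_m$ — still far too small. \textbf{This is the real obstacle}: the naive "one forbidden digit" bound only forbids a $1/Q_m$ fraction, so I must instead show that from each state, a \emph{positive proportion} (bounded below independently of $m$) of digit choices leads toward absorption in a bounded number of steps. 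For CNS $q_m=-m+q_0$ this should be true because the addition of a bounded carry $b$ to a digit $a\in\{0,\dots,Q_m-1\}$ produces carry $0$ unless $a$ lies in one of $O(1)$ "bad" residue windows near the ends of the digit range, each of length $O(1)$ — so at least $Q_m - O(1)$ of the digits immediately absorb. Then the survival probability after $\rho$ steps is $\leq (O(1)/Q_m)^{\rho}$, giving $c_m = O(1/Q_m)$, hence $-\log c_m \geq \log Q_m - O(1) = d\log m - O(1)$, which overshoots. The truth must be in between: I expect that exactly one step is needed and that the "bad" set of digits from which absorption at $0$ fails at that step has size $O(1)$ but also that some states require passing through $O(1)$ intermediate non-zero carries, during which one has $\Omega(Q_m)$ good choices at each step — so the number of bad length-$\rho$ tuples is $\leq C^{\rho}$ with $C=O(1)$ \emph{independent of $m$}, giving $-\log c_m \geq \rho^{-1}\log(Q_m^{\rho}/C^{\rho})\cdot(\cdots)$, i.e. $\eta_{2,m} = -\log(C/Q_m)/\log Q_m = 1 - \log C/\log Q_m = 1 - O(1/\log m)$. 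But we want $1/d$, not $1$, so in fact the correct accounting uses the un-normalized graph: $\eta_{2,m}\log Q_m \geq \log Q_m - \log C$ would give $\eta_{2,m}\geq 1-O(1/\log m)$, and since $\log Q_m = d\log m+O(1)$ this is $\geq 1/d$ only trivially; the honest bound is $\eta_{2,m}\geq \frac{\log m - O(1)}{d\log m + O(1)} = \frac1d - O(1/\log m)$, which requires showing the number of surviving length-$\rho$ walks is $\leq (Cm)^{\rho}$ for an $m$-independent $C$ — equivalently at each step $\geq Q_m/(Cm) \asymp m^{d-1}/C$ digit choices lead to absorption. I would prove this last combinatorial fact by an explicit analysis of the carry recursion for $q_m = -m+q_0$: writing out the Euclidean-division step $b + a = q_m\cdot(\text{new carry}) + (\text{new digit})$ and using $|q_m^\pi|\asymp m$ to see that the new carry is $0$ unless $a$ avoids $\asymp m^{d}/m = m^{d-1}$-codimension-zero portion — more precisely, that the "bad digit" set has size $O(m^{d-1})$ out of $Q_m \asymp m^d$. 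Establishing this size estimate sharply is the crux; the rest (feeding $\eta_{2,m}=-\log c_m/\log Q_m$ into the bound and reading off $\frac1d - O(1/\log m)$) is routine.

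Finally, the box-dimension consequence. Granting the bound on $\eta_{2,m}$, I would invoke the upper bound on $\bar{\dim_B}(\partial\cF)$ in terms of the carry constant proved in \cite{MTT-Fractal}: there it is shown (with $\rh=\rb=1$ in the present CNS setting, so that the upper and lower bounds of \cite{MTT-Fractal} coincide up to the relevant order) that $\bar{\dim_B}(\partial\cF_m) \leq d - \eta_{2,m}$. Substituting $\eta_{2,m}\geq \tfrac1d - O(1/\log m)$ gives $\bar{\dim_B}(\partial\cF_m) \leq d - \tfrac1d + O(1/\log m)$ — but the claimed bound is $d-1+O(1/\log m)$, which is weaker, so I would instead cite the sharper form of the \cite{MTT-Fractal} estimate valid when $\rh=\rb=1$, namely $\bar{\dim_B}(\partial\cF_m) \leq d - \eta_{2,m}\cdot\frac{\log Q_m}{\log\max_\pi|q_m^\pi|} = d - \eta_{2,m} d + o(1)$ in the CNS case where all conjugates have modulus $\asymp m$, giving $d - (1/d - O(1/\log m))\cdot d = d - 1 + O(1/\log m)$ as required. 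The only care needed is to check that the hypotheses of the relevant proposition in \cite{MTT-Fractal} (which assumes primitivity of $\tilde G(S)$) are met for CNS, which they are by \cite{ScheicherThuswaldner2002}.
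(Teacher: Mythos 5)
Your reduction of the statement to a counting bound on non-absorbed walks in the carry automaton is the right frame, and you do eventually identify the correct numerical target (surviving branching factor $\ll m^{d-1}$ against $Q_m\asymp m^d$, giving $\eta_{2,m}\geq 1-\frac{(d-1)\log m+O(1)}{d\log m+O(1)}=\frac1d-O(1/\log m)$). But the central combinatorial fact you reduce to is left unproved --- you say yourself that ``establishing this size estimate sharply is the crux'' --- and, more seriously, the local form in which you state it is false. It is not true that $\card\B_{st}$ is $O(1)$ uniformly in $m$: the carries arising in base $q_m=-m+q_0$ can have conjugates of size up to $\asymp m^{d-1}$, and the set of carry states grows with $m$ (already for $K=\Q(i)$, $q_m=-m+i$, the carry $-\bar q_m=-m-i$ occurs). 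Nor is it true that from every state some digit (let alone all but $O(m^{d-1})$ of them) sends the carry to $0$ in one step: a carry with a nonzero non-rational component, such as $-m-i$ above, admits \emph{no} immediately absorbing digit, and from such a state essentially all $Q_m$ digits keep the walk alive for at least one more step. The bound $m^{d-1}$ per step is only true in an \emph{amortized} sense: a surviving walk must repeatedly traverse rare ``reset'' transitions (of multiplicity $\asymp m^{d-j}$) after at most $d$ generic steps, and it is the geometric mean along such cycles that is $O(m^{d-1})$. Your step-by-step absorption argument cannot see this, so the proof as proposed does not go through.

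This amortization is exactly what the paper's argument supplies and what your plan is missing: it works with the explicit Scheicher--Thuswaldner transducer for CNS, whose states are indexed by subsets $I\subset\{0,\dotsc,d\}$, groups them into classes $V_1,\dotsc,V_d$ by the minimal nonzero element, computes the edge multiplicities $\alpha_{j,m}=2^{d-j+1}(c_{0,m}-c_{j,m})+2^{d-j}c_{j+1,m}$ (order $m^d$, these are the ``alive'' generic transitions $V_j\to V_{j+1}$) and $\beta_{j,m}=2^{d-j+1}c_{j,m}-2^{d-j}c_{j+1,m}$ (order $m^{d-j}$, the resets to $V_1$), and then bounds the Perron--Frobenius eigenvalue of the resulting $d\times d$ companion-type matrix by $O(m^{d-1})$ through its characteristic polynomial. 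Any correct proof must contain an argument of this global/spectral type (or an equivalent cycle-counting argument); a pointwise ``few bad digits per state'' lemma is not available. For the box-dimension consequence your numerics are fine once the $\eta_{2,m}$ bound is granted (the paper concludes via Theorem~4.7 of the Scheicher--Thuswaldner paper with $\mu=Q^{1-\eta_2}\ll m^{d-1}$ and $\beta_{\max}\sim m$), but as it stands that part rests on the unestablished main estimate.
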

Note that we always have~$\bar{\dim_B}( \partial \cF_m) \geq d-1$.
\begin{proof}
  The value~$\eta_{2, m}$, as was apparent from the proof of Lemma~\ref{lem:carry}, is related to the largest eigenvalue of the adjacency matrix of the tranducer describing carry propagation in base~$q_m$ (which was used in the above proof of Lemma~\ref{lem:carry}). We will work with the formalism described in~\cite{ScheicherThuswaldner2002}, where this transducer was described explicitely for CNS. For all~$m\in\N_{\geq 0}$ we let~$\sum_{j=0}^d c_{j,m} X^j$ be the minimal polynomial of~$q_m = -m + q_0$, with~$c_{d,m} = 1$. Note that as~$m\to\infty$, we have~$c_{j, m} \sim m^{d-j}\binom{d}{j}$, so that for~$m$ large enough in terms of~$q_0$ the condition~$c_{j,m} < c_{j-1,m}$ is satisfied for~$1\leq j \leq d$. We define a transducer~${\mathcal T}$ in the following way~:
  \begin{itemize}
    \item The set of states is indexed by subsets~$I \subset \{0, \dotsc, d\}$,
    \item The set of labels is~$\cD = \{0, \dotsc, b_0-1\}$,
    \item Given a state~$I = \{i_0, \dotsc, i_r\}$ (with~$i_0 \leq \dotsb \leq i_r$), we define
    $$ \eta(I) = \sum_{j\geq 0} (-1)^j c_{i_j, m}, $$
    with the convention~$\eta(\varnothing) = 0$.
    \item From a labeled state~$(I_1, d_1)$, there is a transition to another labeled state~$(I_2, d_2)$ determined as follows~:
    \begin{itemize}
      \item If~$d_1 + \eta(I_1) < c_{0,m}$, then~$d_2 = d_1 + \eta(I_1)$ and~$I_2 = I+1 := \{i+1, i\in I\}$.
      \item Otherwise, $d_2 = d_1 + \eta(I_1) - c_{0,m}$ and~$I_2 = ((I+1)\smallsetminus\{0, 1\}) \cup (\{0, 1\} \smallsetminus(I+1))$.
    \end{itemize}
  \end{itemize}
  The states~$I = \{0\}$ and~$I = \varnothing$ are absorbing. Let~$N_m(\ell)$ be the number of possible length~$\ell$ paths in~$\mathcal{T}$ not leading to an absorbing state. Then any value~$\eta_{2, m}>0$ such that~$N_m(\ell) = O(Q^{(1-\eta_2)\ell})$ is admissible as a carry constant.

  We wish to upper-bound the number~$N_m(\ell)$. To this end, we partition the subsets of~$\{0, \dotsc, d\}$ into~$d+1$ classes, according to their smallest or second smallest element~:
  $$ V_{\varnothing}, V_1, V_2 \dotsc, V_d, $$
  where~$V_\varnothing = \{\{0\}, \varnothing\}$, and for~$j\geq1$, $V_j$ consists of the sets whose minimal nonzero element is~$j$. We consider the directed graph~$G$ whose vertices are~$V_\varnothing, V_1, \dotsc, V_d$, and for each pair~$(V, V')$ of vertices, we have an edge~$V\to V'$ with (possibly nil) multiplicity given by the number of transitions~$(I_1, d_1) \to (I_2, d_2)$ in~$\mathcal{T}$ where~$I_1\in V$ and~$I_2\in V'$. Let~$N'_m(\ell)$ be the total number of paths in~$G$ avoiding~$V_\varnothing$ with multiplicity. Then, by construction, we have~$N_m(\ell) \leq N'_m(\ell)$
  
  For~$j\geq 1$, the number of edges in~$G$ from~$V_j$ to~$V_{j+1}$ (with the convention~$V_{d+1} = V_\varnothing$) is given by
  \begin{align*}
    \alpha_{j,m} = {}& \ssum{I \\ \min(I) = j} (c_{0,m} - \eta(I)) + \ssum{I:\ 0\in I \\ \min(I\smallsetminus\{0\}) = j} \eta(I) \\
    = {}& 2^{d-j+1}(c_{0,m}-c_{j,m}) + 2^{d-j} c_{j+1,m},
  \end{align*}
  while the rest of the edges going from~$V_j$ lead to~$V_1$, and the number of them is given by
  $$ \beta_{j,m} = 2^{d-j+1}c_{j,m} - 2^{d-j} c_{j+1,m}. $$
  By Perron-Frobenius' theorem, the number of such path is controlled by the leading eigenvalue~$\lambda_m>0$ of the adjacency matrix (where the absorbing state~$V_\varnothing$ is taken away)
  \[ M_m = \begin{pmatrix} \beta_{1,m} & \beta_{2,m} & \beta_{3,m} & \cdots & \beta_{d-1,m} & \beta_{d,m} \\
    \alpha_{1,m} & 0 & 0 & \cdots & 0 & 0 \\
    0 & \alpha_{2,m} & 0 & \cdots & 0 & 0 \\
    \vdots & \vdots & \ddots & \ddots & \vdots & \vdots \\
    0 & 0 & 0 & \ddots & 0 & 0 \\
    0 & 0 & 0 & \cdots & \alpha_{d-1,m} & 0
  \end{pmatrix}, \]
  in the sense that~$N'_m(\ell) = O((2\lambda_m)^\ell)$, say; we will not require anything more precise. The characteristic polynomial of~$M_m$ is
  $$ P_m(x) = x^d - \sum_{k=1}^d \alpha_{1,m} \dotsb \alpha_{k-1,m} \beta_{k,m} x^{d-k}. $$
  Uniformly for~$x\geq 0$, as~$m\to \infty$, we have
  \begin{align*}
    \sum_{k=1}^d \alpha_{1,m} \dotsb \alpha_{k-1,m} \beta_{k,m} x^{d-k} = {}& (1+o(1)) m^{-d} \sum_{k=1}^d 2^{\frac{k(2d+1-k)}2} (m^d)^k (mx)^{d-k} \binom{d}{k} \\
    \leq {}& (1+o(1)) 2^{\frac{d(d+1)}2} ((x + m^{d-1})^d - x^d).
  \end{align*}
  Therefore~$P_m(x)>0$ if~$x \geq C m^{d-1}$ for a suitable number~$C$ (depending on~$K$ and~$x$), and so~$\lambda_m = O(m^{d-1})$, so that~$N_m(\ell)^{1/\ell} \ll m^{d-1}$. We deduce that there is an admissible constant~$\eta_{2,m}$ satisfying~$Q^{1-\eta_{2,m}} \ll m^{d-1}$. Since~$Q \sim m^d$, we conclude~$\eta_{2,m} \geq \frac1d - O(\frac1{\log m})$ as claimed. The bound on the upper-box dimension follows by~\cite[Theorem~4.7]{ScheicherThuswaldner2002} (with~$\mu = Q^{1-\eta_2}$, $Q\sim m^d$, and~$\beta_{max} \sim m$).
\end{proof}

\bibliographystyle{plain}
\bibliography{bib-RS}

\end{document}